\newcommand{\oppf}{\op{opp}}
\begin{document}

\begin{frontmatter}

\title{{On the largest planar graphs with \\ everywhere positive combinatorial curvature \\ (Extended arxiv version)}}
\author{Luca Ghidelli}\fnref{luca}
\ead{luca.ghidelli@uottawa.ca}
\address{Department of Mathematics and Statistics, University of Ottawa, Canada}
\date{\today}

\begin{abstract}
A planar PCC graph is a simple connected planar graph with everywhere positive combinatorial curvature which is not a prism or an antiprism and with all vertices of degree at least 3. We prove that every planar PCC graph has at most 208 vertices, thus answering completely a question raised by DeVos and Mohar. The proof is based on a refined discharging technique and on an accurate low-scale combinatorical description of such graphs.
We also prove that all faces in a planar PCC graph have at most 41 sides, and this result is sharp as well.
\end{abstract}

\begin{keyword}
\texttt    planar graph\sep combinatorial curvature \sep positive curvature;\  discharging \sep linear optimization \sep local-global 
\MSC[2010] Primary:\ 05C10\sep 05C30;\ Secondary:\ 90C05\sep 57M15\sep 05B45 
\end{keyword}

\end{frontmatter}

\section*{Introduction}

Let $\mcl S$ be a surface (connected 2-dimensional manifold) and let $G$ be a graph 2-cell embedded in $\mcl S$, without loops or multiple edges.
Then there is on $\mcl S$ an induced structure of polyhedral surface, that is an abstract metric space made of regular polygons with some of the vertices and edges identified.
For every vertex $v$ of $G$ we consider the sum $\theta(v)$ of the angles incident in $v$, and we define its \emph{combinatorial curvature} by $K(v) = 1-\frac {\theta(v)}{2\pi}$. See formula \eqref{def:curvature} for an equivalent definition. 
The interested reader is referred to \cite{curvature:modern,curvature:higuchi,curvature:survey:hua,curvature:survey:kamtue} and the introduction of  \cite{ChenChen} for historical notes and comparison with other notions of curvature on graphs.


If all the vertices of $G$ have strictly positive combinatorial curvature and have degree at least 3, then $G$ is necessarily finite, and $\mcl S$ is either the sphere or the projective plane \cite{DeVosMohar:3444,Chen}. There are four infinite families of such graphs: the prisms, the antiprisms and their projective analogues. All other graphs with the above properties will be called \emph{PCC graphs}, and there is only a finite number of them.

In this work we mainly focus on the planar case, i.e. with $G$ embedded in the sphere, because every projective PCC graph can be lifted to a planar one. 
DeVos and Mohar \cite{DeVosMohar:3444} proved that all planar PCC graphs have at most 3444 vertices, and they asked for a sharp bound.
The first conjectured answer was  120, corresponding to the great rhombic icosidodecahedron, but then the lower bound was improved to 138 in \cite{Reti:138} and to 208 in \cite{Sneddon:208,Oldridge:244}. 


On the other hand, as it was already observed by DeVos and Mohar, much more effort is required to ameliorate the upper bound on the number of vertices.  The paper \cite{Zhang:579} lowers it to 579, but unfortunately it contains a mistake \cite[Sec. 8]{Oldridge:244}. 
Oldridge \cite{Oldridge:244} lowered the bound to 244, conditionally on a result that we prove in \cref{sec:big}, and 
an unconditional upper bound of 380 vertices was recently given by Oh \cite{Oh:380}. 

The purpose of this article is to provide a complete solution to the problem by showing that the bound 208 is optimal. 
In \cref{sec:notation,sec:preliminary} we set some preliminary notation and lemmas, and in \cref{sec:main} we outline our strategy.
The full proof occupies everything from \cref{sec:main}  to \cref{sec:208}.
Our result settles also the analogous problem in the projective setting, namely that all projective PCC graphs have at most 104 vertices.

In \cref{sec:big} we prove a useful result of independent interest in the classification of PCC graphs: every face of a planar PCC graph can have at most 41 edges. 
In the literature about PCC graphs the faces with at least 42 edges are called \emph{big faces} \cite{DeVosMohar:3444} or \emph{monster faces} \cite{Oldridge:244}. 
These faces appear very often as annoying special cases that require ad-hoc arguments to be dealt with. 
Zhang \cite{Zhang:579} was able to prove that a big face in a PCC graph has at most 290  vertices, while Oh \cite{Oh:380} showed that a PCC graph has at most one big face, with no more than 190 vertices. 
Our result show that these faces do not, in fact, exist. 

In \cref{sec:example} we present some examples which show that our results are sharp. 
First, we exhibit the known examples of planar PCC graphs with exactly 208 vertices. 
Then we show a systematic way to construct, for any given $N\in\{3,\dots,41\}$, a PCC graph $G_N$ containing a face with size $N$. 
This construction shows that our result on big faces is sharp, it disproves a conjecture made in \cite{Reti:138} and solves a problem raised by Oldridge \cite{Oldridge:244}. 

Another important theme in this paper is the notion of {\red}triangles, see \cref{def:red}. We discover that the {\red}triangles in a very large PCC graph tend to organize in cyclical structures, which we call \emph{chains}. 
We take advantage of this phenomenon in \cref{sec:208} to prove that there are no PCC graphs with exactly 209 vertices, via a simple argument.  
We also use chains of {\red}triangles to find one of the graphs with 208 vertices and to construct the graphs $G_N$. 

There is an active area of research that explores, as in the present paper,  structural theorems on polyhedral graphs with curvarure bounds. 
The interested reader is referred to \cite{planar:fullerenes,planar:nonnegative:gap,planar:nonnegative:set,planar:nonnegative:total,planar:thesis:marissa} for further research on planar graphs with nonnegative curvature and to \cite{hyp:regular,sph:areas} for graphs on spherical and hyperbolic polyhedral surfaces. 

The main technique that is used in this field, as well as in the present paper, is called \emph{discharging}. 
The discharging method  is a flexible technique in structural graph theory that is used to reduce a ``global'' statement to a number of ``local'' verifications. It was introduced more than a century ago \cite{discharging:1904} and it has been used as an essential tool in the proof of celebrated results such as the Four Color Map Theorem. 
We refer to \cite{discharging:coloring,discharging:how:to} and the first section of \cite{discharging:sec:1} for more on this technique. 

To apply the discharging method, one is required to define suitable \emph{discharging rules}. 
In the present paper, this is done in \cref{sec:pair:1,sec:pair:2}. 
The choice of weights in the discharging rules is essentially the result of a linear optimization problem. 
Therefore, in theory, a discharging argument may be performed in an automated way by a computer program, see Oldridge's thesis \cite{Oldridge:244}. 
The author believes that the ideas of the present paper, together with the methods of Oldridge, will enable further results in the classification of PCC graphs. 

\begin{remark}
	In this arxiv version of the present paper we perform the lengthy case-analysis in great detail and we accompany the text with several tables. All the numerical computations can be verified with the aid of a hand-held calculator. A shorter and more readable version of the paper will appear in print \cite{my:print}.
\end{remark}



\tableofcontents

\section{Notation for graph-theoretic objects and multisets}\label{sec:notation}


\subsection{Combinatorial curvature and basic notation}\label{sec:notation:basic}

Let $G$ be a finite simple (i.e. without loops or multiple edges) connected planar graph, and let $\Verts$, $\Edges$, $\Faces$ be respectively the set of vertices, edges and faces.
Since $G$ is finite, we consider $G$ to be 2-cell embedded in the sphere, and we include in $\Faces$ the \emph{outer face} (i.e. the one containing the point at infinity).
Given $x\in\Verts$, $y\in\Edges$ and $z\in\Faces$ we define $\vedges(x)$ to be the set of edges meeting at $x$, $\vfaces(x)$ the multiset of faces touching $x$, $\everts(y)$ the set (pair) of endpoints of $y$, $\efaces(y)$ the multiset (of cardinality 2) of faces touching $y$ by either side, $\fverts(z)$ the multiset of vertices in the boundary of $z$, and $\fedges(z)$ the multiset of edges in the boundary of $z$. If $v_1,v_2\in\Verts$ are adjacent in $G$, 
we denote the connecting edge by $v_1v_2\in\Edges$. The \emph{degree} $\deg(v)$ of a vertex $v\in \Verts$ is the cardinality of $\vedges(v)$ (or of $\vfaces(v)$). The \emph{size} $\abs{\sigma}$ of a face $\sigma\in\Faces$ is the cardinality of $\fedges(\sigma)$ (or of $\fverts(\sigma)$). In general, if $S$ is a (multi)set, we denote its cardinality by $\# S$. A \emph{triangle} is a face $\face\in\Faces$ of size 3. The \emph{face vector} $\vtype(v)$ of $v\in \Verts$ and the \emph{side vector} $\etype(e)$ of $e\in\Edges$ are the multisets of the sizes of the elements respectively of $\vfaces(v)$ and $\efaces(e)$. 
The \emph{combinatorial curvature} of $v\in\Verts$ is 
\begin{equation}\label{def:curvature}
		K(v):= 1-\frac{\deg(v)}{2}+\sum_{\face\in\vfaces(v)} \frac{1}{\abs{\face}}. 
\end{equation}
A prism (of order $N$) is a planar graph with exactly $2N$ vertices, two faces of size $N$ and $N$ faces of size 4, such that $\vtype(v)=\{4,4,N\}$ for every vertex $v$. 
An antiprism (of order $N$) is a planar graph with exactly $2N$ vertices, two faces of size $N$ and $2N$ faces of size 3, such that $\vtype(v)=\{3,3,3,N\}$ for every vertex $v$.
\begin{definition}\label{def:PCC}
A finite simple planar graph $G$ is a (planar) \emph{PCC graph} if 
\begin{enumerate}[(i)]
\item 
$K(v)\great 0$ for every $v\in\Verts$;
\item
$\deg(v)\geq 3$ for every $v\in\Verts$;
\item
$G$ is not a prism or an antiprism.
\end{enumerate}
\end{definition}
As we remarked in the introduction, there are projective analogues of the definitions above. We denote by $\mathbb P^2$ the projective plane and by $p:S^2\to\mathbb P^2$ the 2-fold covering of $\mathbb P^2$ by the sphere. 
Given a graph $G'$ that is 2-cell embedded in $\mathbb P^2$ we may consider the pull-back of its points, edges and faces through $p$. 
Since these are simple connected subsets of $\mathbb P^2$, their preimages through $p$ consist of a pair of subsets of $S^2$ homeomorphic to them. Altogether, these preimages form a planar graph $G$ which we call the \emph{pull-back} of $G'$ (through $p$). 
For every vertex $v$ of a 2-cell embedded graph in the projective plane we may consider its curvature $K(v)$ and its degree $\deg(v)$ as before. 
\begin{definition}\label{def:PCC:proj}
	A finite simple graph $G'$ that is 2-cell embedded in the projective plane $\mathbb P^2$ is a \emph{projective} PCC graph 
	if 
	\begin{enumerate}[(i)]
		\item 
		$K(v)\great 0$ for every vertex $v$ of $G'$;
		\item
		$\deg(v)\geq 3$ for every vertex $v$ of $G'$;
		\item
		the pull-back $G$ of $G'$ through $p:S^2\to\mathbb P^2$ is not a prism or an antiprism.
	\end{enumerate} 
\end{definition}

Equivalently, we may say that a projective PCC graph is a graph $G'$ embedded in $\mathbb P^2$ whose pull-back is a PCC graph.
In the remainder of the article the graph $G$ will denote a planar PCC graph, unless we explicitly state otherwise.


\subsection{Brackets notation for multisets}\label{sec:notation:multiset}

In this article we use repeatedly the notion of a multiset. By cardinality we mean the number of elements, multiplicities taken into account. When we list the elements of a multiset, multiple elements occur more than once in the list, according to their multiplicity. 
However, we employ three different types of brackets to  contain such a list.
\begin{itemize}
\item
Given an orientation $\pi$ of the sphere in which $G$ is embedded, there is a canonically induced \emph{cyclic order} on the multisets $\vedges(v)$, $\vfaces(v)$, $\vtype(v)$, $\fverts(\face)$ and $\fedges(\face)$,  for every $v\in \Verts$ and $\sigma\in\Faces$. When we want to list their elements counterclockwise according to this order, we write them between angle brackets $\langle \cdots \rangle_\pi$, the first element being arbitrarily chosen. We drop the reference to $\pi$ if the list can be obtained from one of the two possible orientations, but we don't emphasize which one.
\item
The usual linear order of $\mathbb N$ induces a partial order on the multisets $\vtype(v)$ and $\etype(e)$, for every $v\in \Verts$ and $e\in\Edges$, which in turn induces a partial order on the multisets $\vfaces(v)$ and $\efaces(e)$, by considering the size of their elements. We use round brackets $(\cdots )$ to list the elements of these multisets increasingly according to this order. In case an element is not strictly greater that another, we choose arbitrarily which one to list first.
\item
Finally, we use curly brackets $\{\cdots\}$ if we don't specify a particular order for the elements.
\end{itemize}

A multiset $\mcl A=\{a_1,\ld,a_n\}$ is a submultiset of $\mcl B$, if for all $i\in\{1,\ld,n\}$ the element $a_i$ appears in $\mcl B$ with multiplicity greater than or equal to the multiplicity of $a_i$ in $\mcl A$. 
In this case we write $\mcl A\subseteq \mcl B$ or $\{a_1,\ld,a_n\}\subseteq\mcl B$. 
If $\mcl A$ and $\mcl B$ are linearly ordered, we write $(a_1,\ld,a_n)\subseteq \mcl B$ to emphasize that the order in $\mcl A$ equals the one induced by $\mcl B$. 
If instead $\mcl B$ has a cyclic order, we write $\langle a_1,\ld,a_n\rangle\subseteq\mcl B$ to say that $a_1,\ld,a_n$ appear in $\mcl B$ as \emph{consecutive} elements. 
For example: if $\mcl B:=\langle 4,3,3,5\rangle_\pi$, both $\langle 3,5,4\rangle_\pi\subseteq \mcl B$ and $\langle 5,4,3\rangle_\pi\subseteq \mcl B$ are true statements.
We say that $v_1,\ld,v_n\in\Verts$ (resp. $e_1,\ld,e_n\in\Edges$) are consecutive on $\face\in\Faces$ if $\langle v_1,\ld,v_n\rangle\subseteq\fverts(\face)$ (resp. $\langle e_1,\ld,e_n\rangle\subseteq\fedges(\face)$).


\subsection{Other definitions}

\begin{figure}[ht]\centering
\input{def_opp.tikz}
\quad
\input{def_blue.tikz}
\quad
\input{def_red.tikz}
\caption{Illustrations for three definitions.}
\label{fig:def}
\end{figure}

The following notion will be useful in several places to indicate the face situated opposite to a vertex with respect to a triangle.

\begin{definition}\label{def:opp}
Let $v\in\Verts$ with $3\in\vtype(v)$ and let $\tau\in\vfaces(v)$ with $\abs{\tau}=3$. Then by \cref{lemma:distinct} below there are well-defined $w_1,w_2\in\Verts$ and $\face\in\Faces$ with $\fverts(\tau)=\{v,w_1,w_2\}$ and $\efaces(w_1w_2)=\{\tau,\face\}$. Then we define
\[
\oppf(v,\tau):=\face.
\]
\end{definition}

In \cref{sec:pair:1}, \cref{sec:11} and \cref{sec:13} we will make use of the following notion of $blue$-edges, $\alpha$-vertices and $\beta$-vertices.
 
\begin{definition}\label{def:blue}
Let $e\in\Edges$ and $v_1,v_2\in\Verts$ with $\etype(e)=(11,13)$ and $\everts(e)=\{v_1,v_2\}$. Then $\vtype(v_1)=\vtype(v_2)=(3,11,13)$ (see the table of admissible vertices in a PCC graph in \cref{sec:list}). Then for $i=1,2$ let $\tau_i\in\vfaces(v_i)$ with $\abs{\tau_i}=3$. If both $\abs{\oppf(v_1,\tau_1)}=11$ and $\abs{\oppf(v_2,\tau_2)}=11$ we say that $e$ is a $blue$-edge and we say that its endpoints $v_1,v_2$ are $\beta$-vertices. Otherwise we say that $v_1$ and $v_2$ are $\alpha$-vertices.
\end{definition}

Finally, in \cref{sec:208} we will make use of the following notion of a {\red}triangle.

\begin{definition}\label{def:red}
We say that  $\tau\in\Faces$ with $\abs{\tau}=3$ is a \emph{{\red}triangle} if 
\[
\forall \ v\in\fverts(\tau)\ \ \ \vtype(v)=(3,3,5,7).
\]
\end{definition}


\section{Admissible vertices and preliminary lemmas}\label{sec:preliminary}

\subsection{The list of admissible vertices}\label{sec:list}

It's not difficult to check that a vertex of a PCC graph can have degree at most 5. 
Essentially, this is true because an internal angle of a regular polygon is at least $1/6$ the measure of a full angle. 
Hence, it is a straightforward arithmetic calculation to list all the admissible face vectors $\vtype(v)$ for a vertex $v\in\Verts$ with positive combinatorial curvature.
This is done, for example, in \cite[Table 1]{DeVosMohar:3444}. 
For the reader's convenience, we copy this list in \cref{table:admissible}.

\begin{table}[H]\centering
\begin{tabular}{cc|cc}
\toprule
\ \ $\vtype(v)$
	\ \ \ &\ \ \ 
where 
&
\ \ $\vtype(v)$
	\ \ \ &\ \ \ 
where \\

\midrule

$(3,a,b)$
	\ \ \ &\ \ \ 
$3\leq a\leq 6,\ a\leq b$\ \ \ 
&
$(3,4,4,a)$
	\ \ \ &\ \ \ 
$4\leq a\leq 5$\ \ \ \\

\midrule

$(3,7,a)$
	\ \ \ &\ \ \ 
$7\leq a\leq 41$\ \ \ 
&
$(3,3,3,3,a)$
	\ \ \ &\ \ \ 
$3\leq a\leq 5$\ \ \ \\

\midrule

$(3,8,a)$
	\ \ \ &\ \ \ 
$8\leq a\leq 23$\ \ \ 
&
$(4,4,a)$
	\ \ \ &\ \ \ 
$4\leq a$\ \ \ \\

\midrule

$(3,9,a)$
	\ \ \ &\ \ \ 
$9\leq a\leq 17$\ \ \ 
&
$(4,5,a)$
	\ \ \ &\ \ \ 
$5\leq a\leq 19$\ \ \ \\

\midrule

$(3,10,a)$
	\ \ \ &\ \ \ 
$10\leq a\leq 14$\ \ \ 
&
$(4,6,a)$
	\ \ \ &\ \ \ 
$6\leq a\leq 11$\ \ \ \\

\midrule

$(3,11,a)$
	\ \ \ &\ \ \ 
$11\leq a\leq 13$\ \ \ 
&
$(4,7,a)$
	\ \ \ &\ \ \ 
$7\leq a\leq 9$\ \ \ \\

\midrule

$(3,3,3,a)$
	\ \ \ &\ \ \ 
$3\leq a$\ \ \ 
&
$(5,5,a)$
	\ \ \ &\ \ \ 
$5\leq a\leq 9$\ \ \ \\

\midrule

$(3,3,4,a)$
	\ \ \ &\ \ \ 
$4\leq a\leq 11$\ \ \ 
&
$(5,6,6)$
	\ \ \ &\ \ \ 
\phantom{aaaaaaaaa}
\ \ \ \\

\midrule

$(3,3,5,a)$
	\ \ \ &\ \ \ 
$5\leq a\leq 7$\ \ \ 
&
$(5,6,7)$
	\ \ \ &\ \ \ 
\ \ \ \\

\bottomrule
\end{tabular}
%
\caption{Table of admissible face vectors.} \label{table:admissible}
\end{table}

Throughout the article, we say that a face vector $\vtype(v)$ is \emph{admissible} if it is one of the multisets listed in the above table. Notice that for every vertex $v\in\Verts$ of a PCC graph we have that $\vtype(v)$ is admissible. Conversely, if $\vtype$ is an admissible multiset, then there is a finite simple planar graph $G$ and $v\in\Verts$ with $K(v)\great 0$ and $\vtype(v)=\vtype$. However, such $G$ need not be a PCC graph, see \cref{sec:big}.


\subsection{Multisets that are actually sets}\label{sec:lemma:distinct}

We already remarked that if $G$ is a finite simple planar graph and $\face\in\Faces$, then $\fverts(\face)$ and $\fedges(\face)$ are a priori just multisets. 
However, PCC graphs exhibit remarkable rigidities, both at local and global scale. 
This implies that most of these multisets are actually sets, and this simplifies our exposition.
First of all, a simple graph has, by definition, no multiple edges between two points. 
We now record this basic observation for future reference. 

\begin{lemma}\label{lemma:distinct:path:2}
Let $G$ be a finite simple graph, and let $e_1,e_2\in\Edges$, $v,v_1,v_2\in\Verts$ with $e_1=vv_1$, $e_2=vv_2$ and $e_1\neq e_2$. Then $v_1\neq v_2$.
\end{lemma}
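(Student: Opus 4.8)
The plan is to argue by contradiction, invoking directly the defining property of a simple graph: there is at most one edge joining any given pair of vertices (and no loops). Suppose for contradiction that $v_1=v_2$. The notation $e_1=vv_1$ and $e_2=vv_2$ records that $\everts(e_1)=\{v,v_1\}$ and $\everts(e_2)=\{v,v_2\}$; under the assumption $v_1=v_2$ these two sets coincide, so $e_1$ and $e_2$ share the same pair of endpoints.

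Now I would use that, since $G$ is simple, the assignment $e\mapsto\everts(e)$ from $\Edges$ to the set of two-element subsets of $\Verts$ is injective — two distinct edges cannot connect the same pair of vertices. Applied to $e_1$ and $e_2$, which we have just seen both have endpoint set $\{v,v_1\}$, this forces $e_1=e_2$, contradicting the hypothesis $e_1\neq e_2$. Hence $v_1\neq v_2$.

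There is no genuine obstacle: the statement is merely a reformulation of the absence of multiple edges, recorded here for convenient reference in the later rigidity arguments (such as \cref{def:opp} and the stronger distinctness lemma that follows). The only minor point to keep in mind is that $v\neq v_1$ and $v\neq v_2$, which holds because a simple graph has no loops, so that $\everts(e_1)$ and $\everts(e_2)$ are honest two-element sets and the injectivity of $e\mapsto\everts(e)$ applies cleanly; even so, the one-line contradiction above goes through without needing this remark explicitly.
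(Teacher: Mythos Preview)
Your proof is correct and matches the paper's treatment: the paper does not give a formal proof at all, merely recording the lemma as an immediate restatement of the definition of a simple graph (no multiple edges), which is exactly the content of your contradiction argument.
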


An easy consequence of \cref{lemma:distinct:path:2} is the following.
\begin{lemma}\label{lemma:distinct}
Let $G$ be a PCC graph and let $\face\in\Faces$ such that $\fverts(\face)$ is not a set. Then $7\leq \abs{\face}\leq 11$. 
The same conclusion on $\abs \face$ holds if $\fedges(\face)$ is not a set.
\end{lemma}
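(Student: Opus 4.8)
The plan is to extract the whole statement from the curvature of one carefully chosen vertex together with a short analysis of the boundary walk $W$ of $\face$ (note that every face of $G$ has size at least $3$, since $G$ is simple).

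Assume first that $\fverts(\face)$ is not a set, so $W$ passes through some vertex $v$ at least twice; let $m\geq 2$ be the number of passages through $v$ (equivalently, the multiplicity of $\face$ in $\vfaces(v)$), and write $n=\abs{\face}$, so that $\vtype(v)$ contains the value $n$ with multiplicity at least $m$. Since $G$ is loopless with minimum degree at least $3$, the closed walk $W$ never uses the same edge in two consecutive steps; hence each of the $m$ subwalks into which the passages through $v$ split $W$ has length at least $3$, and therefore $n\geq 3m$. The key step is a curvature computation at $v$: the $\deg(v)$ faces around $v$ are $m$ copies of $\face$ together with $\deg(v)-m$ faces of size at least $3$, so
\[
0<K(v)\leq 1-\frac{\deg(v)}{2}+\frac{m}{n}+\frac{\deg(v)-m}{3}\leq 1-\frac{\deg(v)}{2}+\frac{1}{3}+\frac{\deg(v)-m}{3},
\]
where the last inequality uses $n\geq 3m$. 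Clearing denominators yields $\deg(v)+2m<8$, which together with $\deg(v)\geq 3$ and $m\geq 2$ forces $\deg(v)=3$ and $m=2$. Consequently $\vtype(v)=\{n,n,z\}$ with $z\geq 3$, and now $K(v)>0$ reads $2/n>1/2-1/z\geq 1/6$, i.e.\ $n\leq 11$. This already gives the upper bound.

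For the lower bound, observe that since $\deg(v)=3$, any two of the three corners at $v$ are cyclically adjacent; in particular the two corners at $v$ belonging to $\face$ share an edge $e^{*}=vu$, which therefore has $\face$ on both sides. Hence $e^{*}$ is a bridge of $G$, and the two corners of $u$ meeting $e^{*}$ also belong to $\face$, so $W$ passes through $u$ as well. Since $n\geq 3m=6$, it only remains to exclude $n=6$. If $n=6$, the two subwalks of $W$ at $v$ have length exactly $3$, so each is a triangle containing $v$; neither of these triangles can contain $u$, for a triangle $v,u,w,v$ would provide a path in $G-e^{*}$ joining $v$ and $u$, contradicting that $e^{*}$ is a bridge. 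But $W$ is the concatenation of these two triangles, so it never visits $u$ — contradicting that $W$ passes through $u$. Hence $n\geq 7$, and $7\leq\abs{\face}\leq 11$ as claimed.

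The edge version is then immediate: if $\fedges(\face)$ is not a set, some edge $e$ occurs twice in $W$, which means $\face$ lies on both sides of $e$; then each endpoint of $e$ carries two corners on $\face$, so $\fverts(\face)$ is not a set, and the bound on $\abs{\face}$ follows from what was just proved. I expect the only place that needs genuine care to be the combinatorial bookkeeping of $W$ — the ``no immediate repetition of an edge'' property and the precise way $W$ runs near the bridge $e^{*}$ — which is cleanest to phrase by treating $W$ as a cyclic sequence of darts (directed edges), each occurring exactly once, so that ``$\face$ on both sides of $e$'' becomes literally ``both darts of $e$ occur in $W$''.
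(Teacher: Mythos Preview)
Your proof is correct and takes a somewhat different route from the paper's. Both arguments share the overall structure---reduce the edge version to the vertex version, locate a repeated vertex $v$, and analyse the boundary walk near $v$---but the tools differ. The paper leans on the precomputed Table~1 of admissible face vectors: the upper bound $n\leq 11$ comes from ``$(n,n)$ in the face vector of $v$ is not admissible for $n\geq 12$'', and the exclusion of $n=6$ from ``$(6,6)$ in the face vector forces degree $3$, then a loop''. You instead extract everything from a direct curvature inequality at $v$: bounding $K(v)$ using $n\geq 3m$ yields $\deg(v)+2m<8$, which pins down $\deg(v)=3$ and $m=2$ without any table lookup, and then $K(v)>0$ gives $n\leq 11$ immediately. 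For $n=6$ you replace the paper's somewhat ad hoc argument by the observation that the edge $e^{*}$ shared by the two $\face$-corners at $v$ is a bridge, so neither length-$3$ subwalk can contain $u$ without producing a $u$--$v$ path in $G-e^{*}$, contradicting the earlier observation that $W$ does visit $u$. Your approach is more self-contained and conceptual (it never invokes Table~1); the paper's is shorter once that table is in hand. One small remark: the ``no immediate edge repetition'' step uses simplicity (no multi-edges), not merely looplessness, but this is of course part of the PCC hypothesis.
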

\begin{proof}
Notice that the second assertion follows from the first, because if $\fedges(\face)$ is not a set, then also $\fverts(\face)$ is not a set.
Now, without loss of generality, suppose that $\fverts(\face)=\langle v_1,\ld,v_{\abs{\face}}\rangle$ and that $v_1=v_j$ with $2\leq j \leq \frac{\abs{\face}} 2 + 1$. 
Since $G$ has no loops, we cannot have $j=2$.
By \cref{lemma:distinct:path:2} and the fact that $\deg v_2\geq 3$ we also see that $j\neq 3$. 
Then $4\leq j$, and this is not possible if $\abs{\face}\leq 5$.
Moreover, if $\abs\face = 6$ we must have $j=4$.
In this case, since $(6,6)\subseteq \vtype(v_1)$ we have $\vtype(v_1)=(a,6,6)$ for some $a\in\{3,4,5\}$ by  \cref{table:admissible}.
This implies that 
 either $v_2=v_3$ or $v_4=v_5$, but this is a contradiction because $G$ has no loops.
Finally, if $\abs \face \geq 12$ we have that $(\abs \face,\abs \face)\subseteq \vtype(v_1)$ is not admissible by \cref{table:admissible}.
\end{proof}

Equivalently, switching the point of view from faces to vertices, we have

\begin{corollary}\label{lemma:distinct:vfaces}
Let $G$ be a PCC graph, let $v\in\Verts$ and let $\face\in\Faces$ appearing in $\vfaces(v)$ with multiplicity 2.
Then $7\leq \abs{\face}\leq 11$.
\end{corollary}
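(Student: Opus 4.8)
The plan is to reduce this corollary directly to \cref{lemma:distinct} by exploiting the symmetry of the vertex--face incidence. First I would observe that the multiset $\vfaces(v)$ records, for each corner (angle) of the polyhedral surface located at $v$, the face to which that corner belongs, whereas the multiset $\fverts(\face)$ records, for each corner of the polygon $\face$, the vertex at which it sits. Both multisets enumerate the same collection of corners, so for any $v\in\Verts$ and $\face\in\Faces$ the multiplicity of $\face$ in $\vfaces(v)$ equals the multiplicity of $v$ in $\fverts(\face)$.

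Given this, the argument is immediate: if $\face$ occurs in $\vfaces(v)$ with multiplicity $2$, then $v$ occurs in $\fverts(\face)$ with multiplicity $2$, and in particular $\fverts(\face)$ is not a set. Then \cref{lemma:distinct} applies and yields $7\leq\abs{\face}\leq 11$, which is precisely the assertion.

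There is no real obstacle here. The only point that deserves to be spelled out is the identification of the multiplicity of $\face$ in $\vfaces(v)$ with the multiplicity of $v$ in $\fverts(\face)$, and this is a direct consequence of the definitions in \cref{sec:notation:basic}: both quantities count the number of angles of the embedding that are simultaneously incident to $v$ and to $\face$. Everything else is just a citation of the already-proved \cref{lemma:distinct}.
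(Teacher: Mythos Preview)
Your proposal is correct and follows exactly the paper's approach: the paper simply introduces the corollary with the phrase ``Equivalently, switching the point of view from faces to vertices,'' which is precisely the multiplicity identification you spell out, followed by an appeal to \cref{lemma:distinct}.
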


We remark that one could improve \cref{lemma:distinct} (and so also \cref{lemma:distinct:vfaces}) to show that $10\leq \abs{\face}\leq 11$. This last inequality is sharp, see \cref{fig:multiset:10:11}.

\begin{figure}
\begin{center}
\input{distinct_10.tikz}
\quad \quad
\input{distinct_11.tikz}
\end{center}
\caption{PCC graphs with faces having multiple edges on their boundary.}
\label{fig:multiset:10:11}
\end{figure}


\subsection{Large faces cannot be too close}\label{sec:lemma:close}

The next lemma essentially says that two large faces in a PCC graph cannot be too close without merging. It is a refinement of \cite[Lemma 4.2]{ChenChen}.

\begin{figure}[ht]\centering
\input{big_close_44.tikz}
\ 
\input{big_close_333.tikz}
\caption{Illustrations for Lemma 2.4.}
\label{fig:bg:close}
\end{figure}
\begin{lemma}\label{lemma:big:close}
Let $G$ be a PCC graph, let $v,v'\in\Verts$ and let $\face,\face'\in\Faces$ with $\face\in\vfaces(v)$, $\face'\in\vfaces(v')$ and $\abs{\face},\abs{\face'}\geq 20$.
If $vv'\in\Edges$ (i.e. $v,v'$ are adjacent in $G$), then $\face=\face'$ and $vv'\in\fedges(\face)$ (i.e. $v$, $v'$ are consecutive on $\face$).
\end{lemma}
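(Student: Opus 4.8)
The plan is to first restrict the possible shapes of $\vtype(v)$ and $\vtype(v')$, then translate the statement into one about the two faces incident to $e:=vv'$, and finally dispose of a short list of cases, two of which are exceptional. Everything rests on the following reading of \cref{table:admissible}: every vertex $u$ lying on a face of size $\geq 20$ has $\deg u\leq 4$, and $\vtype(u)$ is one of $(3,3,b),(3,4,b),(3,5,b),(3,6,b),(3,7,b),(3,8,b),(4,4,b),(3,3,3,b)$ with $b\geq 20$; in particular $u$ lies on a \emph{unique} face of size $\geq 12$ (its ``$b$-face''). Hence $\sigma$ is the unique big face at $v$, $\sigma'$ the unique big face at $v'$, and a vertex lying on some face of size $\geq 20$ cannot lie on a second face of size $\geq 12$. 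These facts, together with \cref{lemma:distinct:vfaces} (a big face never occurs twice around a vertex), will generate all the contradictions below.

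Write $\efaces(e)=\{\rho_1,\rho_2\}$. If $\sigma$ or $\sigma'$ lies in $\{\rho_1,\rho_2\}$, then $v$ (or $v'$) lies on both $\sigma$ and $\sigma'$, forcing $\sigma=\sigma'$ and $e\in\fedges(\sigma)$, and we are done. So assume $\sigma,\sigma'\notin\efaces(e)$; then $e$ is an edge at $v$ not incident to $\sigma$, and reading off the list above forces $\{\abs{\rho_1},\abs{\rho_2}\}$ to be of the form $\{3,m\}$ with $3\leq m\leq 8$ (the cases $\vtype(v)=(3,m,b)$ and $\vtype(v)=(3,3,3,b)$) or $\{4,4\}$ (only $\vtype(v)=(4,4,b)$). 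Now suppose one of $\rho_1,\rho_2$, say $\rho$, is a triangle incident to $\sigma$ at $v$ and to $\sigma'$ at $v'$. By \cref{lemma:distinct}, $\fverts(\rho)=\{v,v',x\}$ for a third vertex $x$; since $e$ touches neither big face, those two incidences must be realized by the remaining edges of $\rho$, i.e.\ $\efaces(vx)=\{\rho,\sigma\}$ and $\efaces(v'x)=\{\rho,\sigma'\}$. Thus $x$ lies on $\sigma$ and on $\sigma'$, so $\sigma=\sigma'$; but then $\sigma$ sits on the far side of both $vx$ and $v'x$ at $x$, so (using $\deg x\geq 3$) it occurs with multiplicity $\geq 2$ in $\vfaces(x)$, contradicting \cref{lemma:distinct:vfaces} since $\abs\sigma\geq 20$.

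A routine check over the admissible vectors of $v$ and $v'$ shows such a triangle $\rho$ always exists \emph{except} in two configurations: \textbf{(I)} $\vtype(v)=(4,4,b)$ with $e$ the edge between the two $4$-faces (whence also $\vtype(v')=(4,4,b')$); and \textbf{(II)} $\vtype(v)=\vtype(v')=(3,3,3,b)$ in the ``crossed'' position, where the triangle of $\efaces(e)$ that is incident to $\sigma$ at $v$ is incident to neither big face at $v'$, and symmetrically. In case (I) each $4$-face at $e$ has a pair of opposite edges lying on $\sigma$ and on $\sigma'$; passing through the complementary pair of edges to the next $4$-face, the generic analysis of the previous paragraph either exhibits a triangle whose third vertex lies on both $\sigma$ and $\sigma'$ (contradiction), or hands back an edge in exactly the same configuration. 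Iterating and using finiteness of $G$, the $4$-faces met form a cyclic band separating $\sigma$ from $\sigma'$; since $\abs\sigma\geq 20$ the boundary $\fedges(\sigma)$ is a genuine cycle (\cref{lemma:distinct}), so the band together with $\sigma$ and $\sigma'$ exhausts the sphere and $G$ is a prism, contradicting \cref{def:PCC}(iii). Case (II) runs identically with triangles in place of $4$-faces, the triangle band making $G$ an antiprism.

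The main obstacle is the last step: confirming that the generic triangle $\rho$ is present outside configurations (I) and (II) — the delicate point being that at a $(3,3,3,b)$-vertex two of the three triangles are incident to the $b$-face while one is not, so in (II) the two edge-faces can straddle $\sigma$ and $\sigma'$ ``out of phase'' — and then making the ``band closes up and exhausts the sphere, hence $G$ is an (anti)prism'' argument rigorous, for which the essential inputs are the cyclicity of $\fedges(\sigma)$ (from \cref{lemma:distinct}) and the rigidity of the admissible vertex vectors around a face of size $\geq 20$.
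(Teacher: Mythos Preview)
Your proof is correct and follows essentially the same strategy as the paper: reduce to the case where the edge $e=vv'$ touches neither big face, use a triangle in $\efaces(e)$ whose third vertex would be forced onto two big faces, and handle the residual $(4,4,b)$ and crossed $(3,3,3,b)$ configurations by an iteration that forces $G$ to be a prism or antiprism. The paper organizes the same ingredients into four cases keyed on $\etype(vv')$ (namely $(3,a)$ with $4\le a\le 8$, $(4,4)$, and the $(3,3,b)$/$(3,3,3,b)$ vertex types), but the underlying dichotomy and the induction along $\fverts(\sigma)$ are identical.

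One remark on presentation: your phrase ``passing through the complementary pair of edges to the next $4$-face, the generic analysis either exhibits a triangle \dots\ or hands back an edge in the same configuration'' compresses the crucial step. What actually happens is that you look at the opposite edge $v_2v_2'$ of the square, observe that $v_2\in\fverts(\sigma)$ and $v_2'\in\fverts(\sigma')$ with $v_2v_2'\notin\fedges(\sigma)$, and then re-enter the full case analysis at that edge; the triangle argument rules out $\vtype(v_2)=(3,4,b)$, leaving $\vtype(v_2)=(4,4,b)$. The paper makes this step explicit (and likewise for the antiprism case, where the relevant edge is the far side of the third triangle at $v$). Your ``routine check'' that only configurations (I) and (II) escape the triangle argument is also correct but deserves a line: the point is that any mixed pair such as $(3,3,b)$ against $(3,3,3,b')$ still admits a good triangle because at the degree-$3$ vertex both triangles of $\efaces(e)$ touch $\sigma$.
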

\begin{proof}
Fix an orientation $\pi$ of the sphere, and let $\fverts(\face)=\langle v_1,\ld,v_{\abs \face}\rangle_\pi$ and 
$\fverts(\face')=\langle v'_{\abs{ \face'}},\ld,v'_1\rangle_\pi$, with $v_1=v$ and $v'_1=v'$.
Suppose that $vv'\not\in\fedges(\face)$. 
We have by \cref{table:admissible} the following 4 cases.
\begin{description}
\item[Case $\etype(vv')=(3,a)$ with $4\leq a\leq 8$]
Let $\tau\in\efaces(vv')$ with $\abs \tau=3$, and let $\fverts(\tau)=\{v,v',w\}$. 
We have that $\efaces(vw)=\{\tau,\face\}$ and $\efaces(v'w)=\{\tau,\face'\}$. 
Hence $\langle\face,\tau,\face'\rangle \subseteq \vfaces(w)$, which is not admissible.
\item[Case $\etype(vv')=(4,4)$]
Let $\kappa\in\efaces(vv')$ so that $\fverts(\kappa)=\langle v_1,v_1',v'_2,v_2 \rangle_\pi$. 
In particular $v_2v'_2\in\fedges(\kappa)\subseteq\Edges$. 
We have that $\vtype(v_2)\in\{(3,4,\abs\face),(4,4,\abs\face)\}$ so in any case $v_2v'_2\not\in\fedges(\face)$. 
Since $v_2\in\fverts(\face)$ and $v'_2\in\fverts(\face')$, the arguments of the previous case show that $\etype(v_2v'_2)=(4,4)$.
\item[Case $\vtype(v)=(3,3,\abs\face)$ or $\vtype(v')=(3,3,\abs{\face'})$]
Assume $\vtype(v)=(3,3,\abs\face)$, the other situation being analogous. 
We have $\efaces(vv')=\{\tau_1,\tau_2\}$ with $\abs{\tau_1}=\abs{\tau_2}=3$, $\fverts(\tau_1)=\{v,v',v_{1}\}$, and $\fverts(\tau_2)=\{v,v',v_2\}$. 
We also have that $\vtype(v')\in\{(3,3,\abs{\face'}), (3,3,3,\abs{\face'})\}$, so either $\{\face,\face',\tau_1\}\subseteq \vtype(v_{1})$ or $\{\face,\face',\tau_2\}\subseteq \vtype(v_2)$, both of which are not admissible.
\item[Case $\vtype(v)=(3,3,3,\abs\face)$ and $\vtype(v')=(3,3,3,\abs{\face'})$]
Then there are $\tau_1,\tau_2\in\Faces$ and $w_1,w_2\in\Verts$ such that 
$\abs{\tau_1}=\abs{\tau_2}=3$, $\fverts(\tau_1)=\langle w_1,v',v\rangle_\pi$ and $\fverts(\tau_2)=\langle v,v',w_2\rangle_\pi$. 
Now we have 2 similar subcases. 
\begin{description}
\item[Case $\vfaces(v)=\langle \tau_1,\tau_2,\tau_3,\face\rangle_\pi$ with $\abs{\tau_3}=3$]
Then $\fverts(\tau_3) = \langle v,w_2,v_2 \rangle_\pi$ and $\langle \tau_1,\face\rangle_\pi\subseteq \vfaces(w_1)$. 
Since $\{\face,\face',\tau_1\}\subseteq \vfaces(w_1)$ is not admissible, we necessarily have $\vfaces(v')=\langle \tau_2,\tau_1,\tau_0,\face'\rangle_\pi$ for some triangle $\tau_0\in\Faces$. 
We deduce that $w_2=v'_2$, and so $v_2v'_2\in\fedges(\tau_3)\subseteq \Edges$. 
\item [Case $\vfaces(v)=\langle\tau_0, \tau_1,\tau_2,\face\rangle_\pi$ with $\abs{\tau_0}=3$]
As before, we get that $w_2=v_2$, and since $\{\face,\face',\tau_1\}\subseteq \vfaces(w_2)$ is not admissible, we deduce that $\vfaces(v')=\langle \tau_3, \tau_2,\tau_1,\face'\rangle_\pi$ for some triangle $\tau_3\in\Faces$. 
Then  $\fverts(\tau_3) = \langle v',v'_2,w_2 \rangle_\pi$ and so $v_2v'_2\in\fedges(\tau_3)\subseteq \Edges$.
\end{description}
In both cases we see that $v_2v'_2\in\Edges$ but $v_2v'_2\not \in\fedges(\face)$. 
Moreover, by the previous cases, we get that $\vtype(v_2)=(3,3,3,\abs\face)$ and $\vtype(v'_2)=(3,3,3,\abs{\face'})$.
\end{description}
The above analysis shows by induction that for all $n\in\N$ we have $v_nv'_n\in\Edges$ but $v_nv'_n\not \in\fedges(\face)$ (where the indices in $v_n$ and $v'_n$ are taken modulo $\abs \face$ and $\abs{\face'}$ respectively).
Moreover we have that either for all $i,j$ we have $\vtype(v_i)=(4,4,\face)$ and $\vtype(v'_j)=(4,4,\face')$, or for all $i,j$ we have $\vtype(v_i)=(3,3,3,\face)$ and $\vtype(v'_j)=(3,3,3,\face')$.
In the first case we deduce that $G$ is a prism, while in the second case we get that $G$ is an antiprism. 
In either case, $G$ is not a PCC graph.
\end{proof}

Finally, the next lemma will be useful for the proof in \cref{sec:big}.

\begin{lemma}\label{lemma:two:edges}
Let $\face,\face', \kappa\in\Faces$ with $\abs{\face},\abs{\face'}\geq 20$ and $\abs{\kappa}\leq 6$. 
Let also $e,e'\in\fedges(\kappa)$ with $e\in\fedges(\face)$ and $e'\in\fedges(\face')$. 
Then $\face=\face'$ and $e=e'$.
\end{lemma}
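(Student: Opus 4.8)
The plan is to derive everything from \cref{lemma:big:close} and the list of admissible face vectors in \cref{table:admissible}. Since $\abs\kappa\leq 6$, \cref{lemma:distinct} shows $\kappa$ is an honest polygon: all its vertices and edges are distinct, so I may write $\fverts(\kappa)=\langle u_1,\dots,u_m\rangle$ with $m=\abs\kappa$ and $g_j:=u_ju_{j+1}$ (indices mod $m$). If $e=e'$, then $e$ lies on the boundaries of the three pairwise distinct faces $\kappa,\face,\face'$ (distinct since $\abs\kappa\leq 6<20\leq\abs\face,\abs{\face'}$), contradicting $\#\efaces(e)=2$; so in this case $\face=\face'$ and $e=e'$. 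Hence I assume $e\neq e'$ and look for a contradiction.

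The one local tool I need is the following special case of \cref{lemma:big:close} (take $v=u$, $v'=u'$, $\face'=\face=\sigma$): if $\abs\sigma\geq 20$ and $u,u'\in\fverts(\sigma)$ are adjacent, then $uu'\in\fedges(\sigma)$. Structural remark: if $\sigma\in\{\face,\face'\}$ and an edge $uu'$ of $\kappa$ lies on $\sigma$, then the maximal run of consecutive-on-$\kappa$ vertices of $\fverts(\kappa)\cap\fverts(\sigma)$ containing $\{u,u'\}$ has length exactly $2$; for it has length $\geq 2$ trivially, and a run with three consecutive vertices $u_{i-1},u_i,u_{i+1}$ would (as $\abs\sigma\geq 12$, so $\fverts(\sigma)$ is a set by \cref{lemma:distinct} and $\sigma$ meets $u_i$ once) make the two $\kappa$-edges at $u_i$ consecutive on $\sigma$ as well, so $\kappa$ and $\sigma$ would occupy the same corner at $u_i$, forcing $\deg u_i=2$. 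Now if $e,e'$ share a vertex $v$, then $\face,\face'\in\vfaces(v)$; since no admissible face vector has two entries $\geq 20$, $\face=\face'$, and then $e,e'$ are the two $\kappa$-edges at $v$, both on $\face$, again sharing the corner at $v$, so $\deg v=2$ — contradiction. So $e,e'$ share no vertex; their endpoint-pairs are then disjoint (no vertex carries two faces of size $\geq 20$), each is a run of length exactly $2$, and two such runs on a cycle with $m\leq 6$ force $m=6$ with the runs separated on each side by a single vertex. Relabelling, $e=u_1u_2$ and $e'=u_4u_5$ are opposite edges of a hexagon, $u_3,u_6\notin\fverts(\face)\cup\fverts(\face')$, and — because the only admissible vector with a $6$ and an entry $\geq 20$ is $(3,6,\cdot)$ — $\vtype(u_i)=(3,6,\abs\face)$ for $i\in\{1,2\}$ and $\vtype(u_i)=(3,6,\abs{\face'})$ for $i\in\{4,5\}$.

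This hexagonal case is the main obstacle. Reading off corners at the degree‑$3$ vertices $u_1,u_2,u_4,u_5$: the edge $g_2=u_2u_3$ carries a triangle $\tau$ whose apex $p$ satisfies $u_2p\in\fedges(\face)$ (and $g_2\notin\fedges(\face)$ since $\efaces(g_2)=\{\kappa,\tau\}$); symmetrically $g_3=u_3u_4$ carries a triangle $\tau''$ with apex $r$, $u_4r\in\fedges(\face')$, and $g_5,g_6$ carry triangles as well. At $u_3$ the faces $\kappa,\tau,\tau''$ give $\{3,3,6\}\subseteq\vtype(u_3)$, so $\vtype(u_3)\in\{(3,3,6),(3,3,3,6),(3,3,4,6)\}$. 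I first rule out $(3,3,4,6)$: the fourth face at $u_3$ is then a quadrilateral $\rho=\{u_3,r,x,p\}$; degree counting gives $\vtype(p)=(3,4,\abs\face)$, forcing $x\in\fverts(\face)$ with $\partial\face$ reaching $x$ along the edge $xp$, and $\vtype(r)=(3,4,\abs{\face'})$, forcing $rx\in\fedges(\face')$; applying \cref{lemma:big:close} to $rx$ (one end on $\face$, one on $\face'$) gives $\face=\face'$ and $rx\in\fedges(\face)$, so $rx$ would be a $\face$-edge at $x$ distinct from the two already identified there, forcing $\deg x=2$ — contradiction. In the remaining two cases $\face=\face'$ follows at once: if $\vtype(u_3)=(3,3,6)$ then $\tau,\tau''$ share the third edge at $u_3$, so $p=r\in\fverts(\face)\cap\fverts(\face')$; if $\vtype(u_3)=(3,3,3,6)$ then the fourth face is the triangle $\{u_3,p,r\}$, so $pr\in\Edges$ with $p\in\fverts(\face)$, $r\in\fverts(\face')$; either way, since no vertex (resp.\ no edge, via \cref{lemma:big:close}) carries two faces of size $\geq 20$, we get $\face=\face'$.

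Finally, with $\face=\face'$ I bound $\abs\face$. The edges $e,e'$ cut the boundary cycle of $\face$ into a path $Q$ from $u_2$ to $u_4$ and a path $Q'$ from $u_5$ to $u_1$, so $\abs\face=\abs Q+\abs{Q'}+2$. The structure above pins down $Q$: if $\deg u_3=3$ then $p=r$ has degree $3$ with $\face$-edges $u_2p$ and $pu_4$, so $Q=(u_2,p,u_4)$ and $\abs Q=2$; if $\deg u_3=4$ then $p,r$ have degree $3$, $pr\in\fedges(\face)$, and $u_2p,pr,ru_4$ are consecutive on $\face$, so $Q=(u_2,p,r,u_4)$ and $\abs Q=3$ — the few degenerate coincidences (e.g.\ $p\in\fverts(\kappa)$, or $p$ equal to the analogous apex on the other side) are excluded by short direct arguments, each producing a degree‑$2$ vertex or a face with too few sides. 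By the same analysis on the $\{u_6\}$-side, $\abs{Q'}\in\{2,3\}$. Hence $\abs\face\leq 8<20$, a contradiction. This settles $e\neq e'$, and together with the case $e=e'$ completes the proof. The only genuine work is the hexagon case, and within it the bookkeeping around $u_3$ (and, symmetrically, $u_6$); every other step is a one‑line deduction from \cref{lemma:big:close} and \cref{table:admissible}.
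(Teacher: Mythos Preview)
Your approach is essentially the paper's: reduce to opposite edges of a hexagon via \cref{lemma:big:close}, analyse the ``gap'' vertex $u_3$ (resp.\ $u_6$), and then bound $|\face|$. However, there is a genuine gap in your case split.

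You assert that $\{3,3,6\}\subseteq\vtype(u_3)$ forces $\vtype(u_3)\in\{(3,3,6),(3,3,3,6),(3,3,4,6)\}$. This is false: $(3,3,5,6)$ is also admissible (it appears in \cref{table:admissible} under $(3,3,5,a)$ with $5\le a\le 7$), and nothing in your setup excludes the fourth face at $u_3$ from being a pentagon. As written, the proof is incomplete.

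The fix is short and uses exactly the mechanism you apply in the $(3,3,4,6)$ case. If the fourth face is a pentagon $\rho=\langle u_3,r,y,x,p\rangle$, then $(3,5,|\face|)\subseteq\vtype(p)$ forces $\vtype(p)=(3,5,|\face|)$, so $px\in\fedges(\face)$; symmetrically $\vtype(r)=(3,5,|\face'|)$ and $ry\in\fedges(\face')$. Now $x\in\fverts(\face)$ and $y\in\fverts(\face')$ are adjacent via the pentagon edge $xy$, so \cref{lemma:big:close} gives $\face=\face'$ and $xy\in\fedges(\face)$. Then at $x$ the two edges $px,xy$ are both $\rho$-edges and both $\face$-edges, yielding $\deg x=2$, a contradiction. (The paper handles this case by observing that the pentagon $\rho$ would itself share two distinct edges with big faces, which has already been excluded for $|\kappa|\le 5$; either route works.)

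A minor remark: your sentence ``so $rx$ would be a $\face$-edge at $x$ distinct from the two already identified there, forcing $\deg x=2$'' is garbled --- you mean that $px$ and $rx$ are simultaneously the two $\rho$-edges and the two $\face$-edges at $x$, hence $\deg x=2$. The reductions to $m=6$ and to opposite edges are correct but compressed; it would help to state explicitly that if an endpoint of $e$ is adjacent on $\kappa$ to an endpoint of $e'$, then \cref{lemma:big:close} forces $\face=\face'$ and then the run-length-$2$ observation gives a contradiction.
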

\begin{proof}
Let $\fverts(\kappa)=\langle v_1,\ld,v_{\kappa}\rangle$ and assume, without loss of generality, that $e=v_1v_2$ and $e'=v_jv_{j+1}$, with $j\leq \frac{\abs{\kappa}} 2 +1 $. 
We cannot have $j=2$ because otherwise $\{\face,\kappa,\face'\}\subseteq \vfaces(v_2)$, which is not admissible. 
We cannot have $j=3$ because of \cref{lemma:big:close} applied to $v_2v_3$. 
Therefore $j\geq 4$, which implies that $\abs\kappa = 6$ and $j=4$. 
We have $\vtype(v_2)=(3,6,\abs \face)$ and $\vtype(v_4)=(3,6,\abs{\face'})$, so there are triangles $\tau,\tau'\in\Faces$ with $\langle \tau,\kappa,\tau'\rangle\subseteq \vfaces(v_3)$, $\oppf(v_3,\tau)=\face$ and $\oppf(v_3,\tau')=\face'$. 
Let $\fverts(\tau)=\{v_2,v_3,v_{23}\}$ and $\fverts(\tau')=\{v_3,v_4,v_{34}\}$. 
We rule out the following 2 cases.
\begin{description}
\item[Case $\vtype(v_3)=(3,3,4,6)$]
Let $\kappa'\in\vfaces(v_3)$ with $\abs{\kappa'}=4$ and let $\fverts(\kappa')=\langle v_3, v_{34}, w_3, v_{23}\rangle$. 
We notice that $\vtype(v_{23})=(3,4,\abs \face)$ and $\vtype(v_{34})=(3,4,\abs{\face'})$. 
Therefore $\kappa'$ shares the edge $v_{23}w_3$ with $\face$, and shares the edge $v_{34}w_3$ with $\face'$. 
However we already argued that this is impossible.
\item[Case $\vtype(v_3)=(3,3,5,6)$]
Let $\kappa'\in\vfaces(v_3)$ with $\abs{\kappa'}=5$ and let $\fverts(\kappa')=\langle v_3, v_{34}, w_4,w_2, v_{23}\rangle$. 
We notice that $\vtype(v_{23})=(3,5,\abs \face)$ and $\vtype(v_{34})=(3,5,\abs{\face'})$. 
Therefore $\kappa'$ shares the edge $v_{23}w_2$ with $\face$, and shares the edge $v_{34}w_4$ with $\face'$. 
However we already argued that this is impossible.
\end{description}
Therefore we have $\vtype(v_3)\in\{(3,3,6), (3,3,3,6)\}$ by \cref{table:admissible}. 
We now describe what happens in either case. 
	\begin{figure}[ht]\centering
	\input{distinct_edges_5.tikz}
	\quad
	\input{distinct_edges_6.tikz}
	\caption{Illustrations for Lemma 2.5.}
	\label{fig:two:edges:6}
	\end{figure}
\begin{description}
\item[Case $\vtype(v_3)=(3,3,6)$]
Then $v_{23}=v_{34}$. 
Since $\langle\face,\tau, \tau', \face'\rangle\subseteq \vfaces(v_{23})$ is not admissible, we must have $\face=\face'$ and $\vfaces(v_{23})=\{\tau,\tau',\face\}$.
In particular $v_1,v_2,v_{23},v_4,v_5$ are consecutive on $\face$.
\item[Case $\vtype(v_3)=(3,3,3,6)$]
Let $\tau_3\in\Faces$ such that $\vfaces(v_3)=\langle \tau, \kappa, \tau', \tau_3'\rangle$. 
Then $\fverts(\tau_3)=\{v_{23},v_3,v_{34}\}$. 
Then by \cref{lemma:big:close} we have that $\face=\face'$ and $v_{23}v_{34}\in\fedges(\face)$. 
In particular $v_1,v_2,v_{23}, v_{34},v_4,v_5$ are consecutive on $\face$.
\end{description}
Replacing $v_3$ with $v_6$ and repeating the above arguments we find that there are $v_{56}, v_{61}\in\Verts$ such that $v_5, v_{56}, v_1$ or $v_5,v_{56}, v_{61}, v_1$ are consecutive on $\face$. 
This implies that $\abs \face\leq 8$, contrary to the assumption.
 \end{proof}

The condition $\abs{\kappa}\leq 6$ in \cref{lemma:two:edges} is necessary, as
 as shown in \cref{fig:two:edges:7:20}.
	\begin{figure}[ht]\centering
	\input{distinct_edges_7.tikz}
	\caption{A PCC graph in which two faces $\face, \kappa$ with $\protect\abs\face = 20$, $\protect\abs\kappa=7$ share two edges.}
	\label{fig:two:edges:7:20}
	\end{figure}

%
%
%


\section{The main theorem and outline of the proof}\label{sec:main}


The goal of this article is to prove the following.
\begin{theorem}\label{thm:main}
Let $G$ be a planar PCC graph. Then $\#\Verts\leq 208$.
\end{theorem}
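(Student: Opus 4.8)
The plan is to run a discharging argument driven by the discrete Gauss--Bonnet identity $\sum_{v\in\Verts}K(v)=2$, which holds for any graph $2$-cell embedded in $S^2$. First I would split each curvature into \emph{corner charges}: for a vertex $v$ and an incident face $\sigma\in\vfaces(v)$ put $\Phi(v,\sigma):=1/\deg v-1/2+1/\abs\sigma$, so that $K(v)=\sum_{\sigma\in\vfaces(v)}\Phi(v,\sigma)$ and hence $\sum_{\mathrm{corners}}\Phi=2$. The target is then to design discharging rules producing new corner charges $\Phi'$ with the feature that, for every $v\in\Verts$, the sum $\sum_{\sigma\in\vfaces(v)}\Phi'(v,\sigma)$ is at least $1/104$; granting this, $2=\sum_{v}\sum_{\sigma}\Phi'(v,\sigma)\ge\#\Verts/104$, which is precisely $\#\Verts\le 208$. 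The value $1/104$ is the largest threshold for which such rules can possibly exist --- indeed on a $208$-vertex graph the discharging must be tight, since $208\cdot(1/104)=2$ and every vertex is assigned at least $1/104$, so the graphs of \cref{sec:example} leave no slack --- and consequently the rules, and especially the weights attached to them, must be calibrated so that the worst admissible local configuration meets the bound with equality. Making this calibration precise is what the linear-optimization viewpoint is for, and the rules themselves are laid out in two stages in \cref{sec:pair:1,sec:pair:2}.

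In this scheme the sources of charge are the corners at triangles incident to low-degree vertices, where $\Phi$ is as large as $1/6$ at a degree-$3$ corner, while the sinks are the corners at large faces, where $\Phi\le -1/6+1/\abs\sigma$ is negative as soon as $\abs\sigma\ge 7$. The substance of the proof is to show that each large face is surrounded by enough triangular surplus to cover its deficit while leaving every vertex above $1/104$ --- and here the exclusion of prisms and antiprisms from \cref{def:PCC} is indispensable, because those are exactly the graphs in which the available surplus is spread too thinly for any discharging to reach the threshold. To organize the argument I would lean on the structural preliminaries: \cref{lemma:distinct,lemma:distinct:vfaces} let me treat $\fverts(\sigma)$ and $\vfaces(v)$ as genuine sets outside a short explicit list of exceptional sizes, the admissible-vertex table of \cref{sec:list} reduces the local picture around every vertex to finitely many shapes, and \cref{lemma:big:close} shows that two faces of size $\ge 20$ can never be adjacent without coinciding, so the regions of influence of the very large faces are pairwise disjoint and can be discharged independently. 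I would also dispose first, in \cref{sec:big} and using \cref{lemma:two:edges}, of the \emph{big faces} of size $\ge 42$, which keeps the list of relevant face sizes finite. Concretely, the first discharging stage sends charge from triangles to the large faces touching them, recording how much surplus or deficit each face still carries; the second stage then rebalances among exactly the faces that still carry a nonzero such value.

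I expect the main obstacle to be the band of medium-sized faces, of size between $7$ and $19$, for which \cref{lemma:big:close} gives nothing: these can cluster, their neighbourhoods overlap, and near a vertex of type $(3,7,a)$ the curvature is almost zero, so every margin in the bookkeeping is razor-thin. The worst cases are $\abs\sigma=11$ and $\abs\sigma=13$, precisely the pair that can meet along a ``$blue$-edge'' as in \cref{def:blue}; I would handle them in dedicated analyses (\cref{sec:11,sec:13}), using the $\alpha$-vertex / $\beta$-vertex dichotomy to pin down where charge may flow and to eliminate the handful of configurations that would otherwise break the bound. Once $\#\Verts\le 208$ is in hand, I would rule out $\#\Verts=209$ in \cref{sec:208} by noting that near-equality in the discharging forces an abundance of vertices of type $(3,3,5,7)$, i.e.\ of {\red}triangles, that these must assemble into cyclic ``chains'', and that a short counting argument along such a chain is contradictory. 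The projective bound of $104$ vertices then follows at once, since a projective PCC graph pulls back to a planar PCC graph with twice as many vertices.
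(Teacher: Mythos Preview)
Your broad outline (Gauss--Bonnet discharging, dispose of big faces first, hardest cases at $\abs\sigma\in\{11,13\}$, finish with \red-triangle chains) matches the paper's, but there is a genuine gap in the threshold you aim for and a resulting logical slip. You set the target at $1/104$ per vertex, which would give $\#\Verts\le 208$ in one stroke; yet you then say you will ``rule out $\#\Verts=209$'' afterwards, which is already excluded. The reason the paper does \emph{not} aim for $1/104$ is that the extremal $208$-vertex graphs of \cref{sec:example} have average curvature exactly $1/104$, so a discharging at that threshold leaves zero slack: every local inequality you check would have to become an equality on those graphs, which defeats the usual method of verifying strict inequalities configuration by configuration. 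The paper instead uses $c_v:=K(v)-2/209$ and a pairing $\pair:\Verts\times\FFaces\to\Q_{\ge 0}$ into a set $\FFaces$ of \emph{discharge faces} (selected faces of $G$ together with two auxiliary symbols $\dface,\ddface$), proving $c(\sigma)>0$ for every $\sigma\neq\dface$ (\cref{prop:main}). The $\dface$-vertices --- those with $\vtype(v)\in\{(5,6,7),(3,3,5,7)\}$ and $K(v)=1/105<2/209$ --- are deliberately quarantined in $\dface$, with only $c(\dface)\ge -2/209$. This yields $\#\Verts\le 210$, not $208$.

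Two further steps, neither a discharging argument, then finish the proof; you are missing one and misdescribing the other. First, $\#\Verts=210$ forces every vertex to be a $\dface$-vertex, and a double count of the edges with $\etype(e)\in\{(5,6),(5,7),(6,7)\}$ gives an inconsistent linear system (\cref{lemma:210:lemma,lemma:210:no}); your plan does not mention this step at all. Second, for $\#\Verts=209$ the paper does use \red-triangle chains, but the endgame is not a ``short counting argument along such a chain'': it is a graph surgery (\cref{lemma:209:no}) that cuts the sphere along the chain and replaces the smaller side by a copy of the larger, producing a PCC graph with $\ge 210$ vertices and reducing to the case already excluded. Also note that the paper discharges \emph{to faces} (the fuzzy-partition viewpoint of \cref{sec:main:rmk}) rather than redistributing among vertices as in your corner-charge scheme; the auxiliary sinks $\dface,\ddface$ have no analogue in your setup, and they are exactly what lets the paper isolate the near-extremal $(3,3,5,7)$/$(5,6,7)$ vertices and keep the remaining case analysis strict.
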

As we remarked in the introduction, and as we will show in \cref{sec:example}, there are examples of PCC graphs with 208 vertices, so \cref{thm:main} is sharp. 
We recall from the remarks following \cref{def:PCC:proj} that a \emph{projective} PCC graph is a finite simple graph $G'$ that 
is 2-cell embedded in the projective plane $\mathbb P^2$ and such that its pull-back $G$, through the 2-fold covering of $\mathbb P^2$ by the sphere, is a planar PCC graph. 
The number of vertices of the pull-back $G$ is twice the number of the vertices of $G'$, therefore \cref{thm:main} has the following consequence. 
\begin{corollary}\label{thm:main:proj}
	A projective PCC graph has at most 104 vertices. 
\end{corollary}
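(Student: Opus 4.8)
\textbf{Proof proposal for \cref{thm:main:proj}.}
The plan is to deduce this directly from \cref{thm:main} by a covering-space count. Let $G'$ be a projective PCC graph, 2-cell embedded in $\mathbb P^2$, and let $n$ denote its number of vertices. By \cref{def:PCC:proj}, the pull-back $G$ of $G'$ through the 2-fold covering $p\colon S^2\to\mathbb P^2$ is, by construction, a finite simple connected planar graph with $K(v)\great 0$ and $\deg(v)\geq 3$ for all its vertices, and it is not a prism or an antiprism; hence $G$ is a planar PCC graph in the sense of \cref{def:PCC}. So \cref{thm:main} applies to $G$ and gives $\#\Verts(G)\leq 208$.

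The only remaining point is the vertex count of the pull-back. For each vertex $v'$ of $G'$, the singleton $\{v'\}$ is a connected, simply connected subset of $\mathbb P^2$, so its preimage $p^{-1}(v')$ consists of exactly two points of $S^2$; moreover the preimages of distinct vertices of $G'$ are disjoint, and the vertex set of $G$ is precisely $\bigcup_{v'}p^{-1}(v')$ by the definition of the pull-back. Therefore $\#\Verts(G)=2n$. Combining with $\#\Verts(G)\leq 208$ yields $2n\leq 208$, i.e. $n\leq 104$, as desired.

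I do not expect any genuine obstacle here: the statement is a formal corollary of \cref{thm:main}. The only care needed is in verifying that the three defining conditions of a planar PCC graph are literally inherited by the pull-back (conditions (i) and (ii) hold vertex-wise since $p$ is a local isometry of the induced polyhedral surfaces, so $K$ and $\deg$ are preserved, and condition (iii) is built into \cref{def:PCC:proj}), and in the elementary observation that a double cover restricted over a point has exactly two sheets.
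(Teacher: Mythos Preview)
Your proof is correct and follows exactly the same approach as the paper: the pull-back of a projective PCC graph is by definition a planar PCC graph, and since the 2-fold cover $p$ doubles the vertex count, \cref{thm:main} immediately gives $2n\leq 208$. Your write-up simply spells out the details (why the three conditions of \cref{def:PCC} are inherited, and why each vertex has exactly two preimages) that the paper leaves implicit in the sentence preceding the corollary.
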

It is easy to see that the large planar PCC graphs discussed in \cref{sec:example} descend to projective PCC graphs with 104 vertices \cite{Sneddon:208,Oldridge:244}, hence \cref{thm:main:proj} is sharp as well. 

In order to prove \cref{thm:main}, we introduce two auxiliary indeterminate objects $\dface$, $\ddface$, which we call \emph{auxiliary faces}, and we consider the set of \emph{discharge faces} $\FFaces$ given by
\[
	\FFaces:=\{\dface,\ddface\}\cup\{\face\in\Faces:\ \abs{\face}\not\in \{3,4,6,8,9,10,12\}\}.
\]
We will construct, in \cref{sec:pair:1,sec:pair:2}, a map $\pair:\ \Verts\times\FFaces\to\Q_{\geq 0}$, called \emph{pairing}, satisfying 
\begin{equation}\label{eq:pairing}
\sum_{\face\in\FFaces} \pair(v,\face) = 1
\quad \quad \forall \, v\in\Verts.
\end{equation} 

For every $v\in\Verts$ we denote $\c_v:=K(v)-\frac{2}{209}$ and for every $\face\in\FFaces$ we define
\[
  \pair(\face):=\sum_{v\in\Verts} \pair(v,\face)
	\quad\text{ and }\quad
	\c(\face):=\sum_{v\in\Verts} \c_v\,\pair(v,\face).
\]
The following quantities will also be useful in estimating $\c(\face)$, for $\face\in\FFaces$:
\[
\c_+(\face):=\sum_{\substack{v\in\Verts\\ \c_v\geq 0}} \c_v\,\pair(v,\face)
   \quad\text{ and }\quad
\c_-(\face):=\sum_{\substack{v\in\Verts\\ \c_v\less 0}} \c_v\,\pair(v,\face).
\]

The crucial observation is contained in the following lemma.
\begin{lemma}\label{lemma:main}
We have $\#\Verts\leq 208$ if and only if $\c(G):=\sum_{\face\in\FFaces} \c(\face)\great 0$.
\end{lemma}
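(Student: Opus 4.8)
The statement is an equivalence between a purely combinatorial inequality ($\#\Verts \le 208$) and the positivity of the total discharged curvature $\c(G)$. The key is that by construction the pairing redistributes to each discharge face a total "mass" of pairing coming from the vertices, and $\c_v = K(v) - \tfrac{2}{209}$ measures each vertex's curvature against a uniform threshold. The whole proof should be a short computation that unwinds the definitions and uses the combinatorial Gauss–Bonnet identity $\sum_{v\in\Verts} K(v) = 2$ (valid for any finite simple connected planar graph, since $\mcl S = S^2$ has Euler characteristic $2$).

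**Key steps, in order.** First I would compute $\c(G)$ by interchanging the order of summation:
\[
\c(G) = \sum_{\face\in\FFaces}\c(\face) = \sum_{\face\in\FFaces}\sum_{v\in\Verts}\c_v\,\pair(v,\face) = \sum_{v\in\Verts}\c_v\sum_{\face\in\FFaces}\pair(v,\face) = \sum_{v\in\Verts}\c_v,
\]
where the last equality is exactly the normalization \eqref{eq:pairing}. Next, expand $\c_v = K(v) - \tfrac{2}{209}$ and use Gauss–Bonnet:
\[
\c(G) = \sum_{v\in\Verts}\Bigl(K(v) - \tfrac{2}{209}\Bigr) = \sum_{v\in\Verts}K(v) - \tfrac{2}{209}\#\Verts = 2 - \tfrac{2}{209}\#\Verts = \tfrac{2}{209}\bigl(209 - \#\Verts\bigr).
\]
Since $\tfrac{2}{209} > 0$, we have $\c(G) \great 0$ if and only if $209 - \#\Verts > 0$, i.e. if and only if $\#\Verts \le 208$ (as $\#\Verts$ is an integer, $\#\Verts < 209$ is the same as $\#\Verts \le 208$). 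This establishes both directions of the equivalence simultaneously.

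**Where the work really lies.** The lemma itself is essentially a bookkeeping identity, so there is no genuine obstacle in its proof — the only thing to be careful about is that the double sum rearrangement is justified (it is a finite sum, so no convergence issues) and that the Gauss–Bonnet identity $\sum_v K(v) = 2$ is invoked with the correct normalization matching formula \eqref{def:curvature}: summing \eqref{def:curvature} over all vertices and using $\sum_{v}\deg(v) = 2\#\Edges$ together with $\sum_{\face}|\face| = 2\#\Edges$ gives $\#\Verts - \#\Edges + \#\Faces = 2$, which is Euler's formula for the sphere. The real difficulty of the paper is of course \emph{constructing} a pairing $\pair$ for which each individual $\c(\face)$ can be shown to be $\less 0$ (or at least for which the total is $\less 0$ when $\#\Verts \geq 209$) — that is the content of \cref{sec:pair:1,sec:pair:2} and the subsequent case analysis, and it is where the discharging rules and the linear-optimization choice of weights come in. But none of that is needed for \cref{lemma:main}; the lemma merely sets up the target inequality that the rest of the argument will contradict.
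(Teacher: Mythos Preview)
Your proof is correct and essentially identical to the paper's: interchange the finite double sum using \eqref{eq:pairing}, invoke the Gauss--Bonnet/Euler identity $\sum_v K(v)=2$, and read off $\c(G)=\tfrac{2}{209}(209-\#\Verts)$. One small slip in your closing commentary (not in the proof itself): the goal of the later sections is to make each $\c(\face)$ \emph{positive} (or nearly so), not $\less 0$---you want $\c(G)\great 0$ to force $\#\Verts\le 208$.
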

\begin{proof}
By the Euler-Poincar\'e formula and a double-counting argument we have
\begin{equation}\label{eq:total_curvature}
\sum_{v\in\Verts} K(v) = 
\sum_{v\in\Verts} 1-\sum_{e\in\Edges}\sum_{v\in\everts(e)}\frac{1}{2} + \sum_{\face\in\Faces}\sum_{v\in\fverts(\face)}\frac{1}{\abs{\face}} = \#\Verts-\#\Edges+\#\Faces = 2.
\end{equation}
Hence by formula \eqref{eq:pairing} we have
\begin{equation}\label{eq:contribution}
\sum_{\face\in\FFaces} \c(\face) = \sum_{\face\in\FFaces} \sum_{v\in\Verts} \c_v\pair(v,\face) = \sum_{v\in\Verts} \c_v = \sum_{v\in\Verts} \left(K(v)-\frac{2}{209}\right) = \frac{2(209-\#\Verts)}{209},
\end{equation}
which is strictly positive if and only if $\#\Verts$ is strictly smaller than 209.
\end{proof}

It is therefore our objective to construct the pairing $\pair:\ \Verts\times\FFaces\to\Q_{\geq 0}$ so that for every $\face\in\FFaces$ with $\pair(\face)\neq 0$ the number $\c(\face)$ is positive or, if negative, very small in absolute value, so that it can be countered in the sum by another positive term. 
The outcome of the required case-by-case analysis is summarized in the following.

\begin{proposition}\label{prop:main}
Let $G$ be a PCC graph and $\face\in\FFaces$ with $\pair(\face)\neq 0$.
\begin{enumerate}[(i)]
\item
If $\abs{\face}=5$, then $\c(\face)\geq 0.002$.
\item
If $\abs{\face}=7$, then $\c(\face)\geq 0.0095$.
\item
If $\abs{\face}=11$, then $\c(\face)\geq 0.0003$.
\item
If $\abs{\face}=13$, then $\c(\face)\geq 0.00003$.
\item
If $14\leq \abs{\face}\leq 39$ and $\abs{\face}\neq 19$, then $\c(\face)\geq 0.0002$.
\item
If $\abs{\face}=19$, then $\c(\face)\geq 0.0065$.
\item
If $\abs{\face}=40$ or $\abs{\face}=41$, then $\c(\face)\geq 0.011$.
\item
If $\face = \ddface$, then $\c(\face)\geq 0.0006$.
\item
If $\face=\dface$, then $\c(\face)\geq\frac{-2}{209}\great -0.0096$.
\end{enumerate}
\end{proposition}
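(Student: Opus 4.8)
The plan is to read \cref{prop:main} as the payoff of a discharging argument whose rules are encoded by the pairing $\pair$, to be built in \cref{sec:pair:1,sec:pair:2}. Picture $\c_v=K(v)-\frac{2}{209}$ as an amount of charge sitting at the vertex $v$, and $\pair(v,\face)$ as the fraction of it that $v$ ships to the discharge face $\face$; then \eqref{eq:pairing} says every vertex exports all of its charge and $\c(\face)$ is what $\face$ receives. By \eqref{eq:contribution} the grand total is $\frac{2(209-\#\Verts)}{209}$, so everything hinges on arranging the transfers \emph{locally} --- each $v$ ships only to faces near it --- so that every discharge face finishes with a balance that is non-negative, or at worst negative by a negligible amount. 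The two demands pull against each other: a vertex of low curvature (say of degree $5$) must still export its small negative charge somewhere, and a discharge face ringed by such vertices must nevertheless stay in the black. This tension is why the weights $\pair(v,\face)$ are obtained as the solution of a linear-optimization problem rather than guessed.

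The estimates rest on a handful of structural facts. \cref{table:admissible} bounds the degree by $5$ and lists the finitely many admissible face vectors, so the local picture around a vertex is one of boundedly many shapes; \cref{lemma:distinct,lemma:distinct:vfaces} record when the boundary of a face is a genuine set; and \cref{lemma:big:close,lemma:two:edges} say that two faces of size $\geq 20$ can neither be adjacent nor share two edges of a common small face. Together with the big-face theorem of \cref{sec:big} --- no face has $\geq 42$ sides --- this pins the possible sizes of discharge faces to $5,7,11,13$ and $14,\dots,41$, and, more usefully, it lets one enumerate, for each such size $N$, the finitely many admissible neighborhoods of an $N$-face: the cyclic word of its boundary vertices with their face vectors, plus the faces just outside. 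When $N\geq 20$ these lemmas force every vertex on the $N$-face to also lie on a small face, so all the charge arriving at the $N$-face originates at high-curvature vertices.

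With $\pair$ and this neighborhood data in hand, \cref{prop:main} reduces to a long case analysis by $\abs{\face}$. For each admissible neighborhood one lists the vertices $v$ with $\pair(v,\face)\neq 0$, reads off $\c_v$ and $\pair(v,\face)$, writes $\c(\face)=\c_+(\face)+\c_-(\face)$, bounds $\c_+(\face)$ from below and $\abs{\c_-(\face)}$ from above, and compares with the stated threshold. Pentagons ($N=5$) and heptagons ($N=7$) are the workhorses, absorbing most of the transferred charge; the heptagon analysis is entangled with $\heartsuit$-triangles and with the {\red}triangles of \cref{def:red}, whose vertices of type $(3,3,5,7)$ satisfy $K(v)=\frac{1}{105}$ and so carry charge $\c_v=\frac{1}{105}-\frac{2}{209}=-\frac{1}{21945}$, only barely negative --- which is exactly why they must be tracked individually and, in \cref{sec:208}, assembled into chains. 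The sizes $11$ and $13$ are handled through the $blue$-edges and $\alpha,\beta$-vertices of \cref{def:blue} in \cref{sec:11,sec:13}; the large faces $14\leq N\leq 41$ lean on \cref{lemma:big:close,lemma:two:edges}, with $N=19$ singled out because of the vertex type $(4,5,19)$ and $N\in\{40,41\}$ because they lie at the extreme. Finally $\dface$ and $\ddface$ serve as sinks for the leftover charge of a vertex that cannot reach any genuine discharge face (for instance a vertex all of whose faces have size in $\{3,4,6,8,9,10,12\}$): one shows that $\c(\ddface)$ stays strictly positive, whereas $\c(\dface)$ need only be shown to stay above $-\frac{2}{209}$, the amount subtracted from a single vertex in \eqref{eq:contribution}.

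I expect the difficulty to sit in two places. First, choosing the weights $\pair(v,\face)$ so that the per-face inequalities of \cref{prop:main} hold \emph{simultaneously} over every admissible neighborhood --- a single global solution to a large system of local constraints, which is the true content of \cref{sec:pair:1,sec:pair:2}. Second, the combinatorial bulk of the large-face cases: cutting the list of admissible neighborhoods of a face of size between $14$ and $41$ down to something verifiable by hand rests essentially on \cref{lemma:big:close,lemma:two:edges} and on the non-existence of big faces proved in \cref{sec:big}.
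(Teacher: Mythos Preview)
Your plan is broadly the paper's plan: a discharging argument, with the per-size case analysis that fills \cref{sec:5}--\cref{sec:ddface} being the actual proof of \cref{prop:main}. A few of your anticipations of how the cases will go are off, though, and would lead you down the wrong alley.

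First, the heptagon case has nothing to do with {\red}triangles. Vertices with $\vtype(v)=(3,3,5,7)$ (and likewise $(5,6,7)$) are $\dface$-vertices: they send \emph{all} of their charge to $\dface$ and contribute nothing to $\c(\face)$ for any genuine $\face$. So your computation $\c_v=\tfrac1{105}-\tfrac2{209}$, while arithmetically correct, is irrelevant to (ii); these vertices only enter in (ix). The heptagon analysis is instead driven by the bad vertices $(4,7,8)$ and $(4,7,9)$, counterbalanced by the good $(3,7,8)$, $(3,7,9)$, $(4,4,7)$, $(3,3,4,7)$ on the same boundary.

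Second, your expectation for $N\geq 20$ is too optimistic. It is not true that ``all the charge arriving at the $N$-face originates at high-curvature vertices'': a vertex with $\vtype(v)=(3,7,N)$ for $30\leq N\leq 41$, or $(3,8,N)$ for $20\leq N\leq 23$, sits on the $N$-face and has $\c_v<0$. The mechanism that rescues (v) and (vii) is different: whenever two consecutive boundary vertices are both of type $(3,a,N)$ with $a\in\{7,8,9,10\}$, the apex of the triangle between them is a \emph{special} vertex (rule~\cref{rule:3ab}) that ships positive charge to $\face$ from outside its boundary. The paper formalises this with a secondary pairing on $\fedges(\face)$ (\cref{def:lpair,def:lpair:4041}); without that device the inequalities in (v) and (vii) do not close.

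Third, the hardest case by far is $\abs\face=11$, not the large faces. Here the boundary can be saturated with low-curvature vertices of types $(3,11,12)$, $(3,11,13)$, $(4,6,11)$, $(3,3,4,11)$, and the argument needs essentially all of the special rules \cref{rule:TS}--\cref{rule:33335} plus the $\alpha/\beta$ split of \cref{def:blue} to scrape together the tiny surplus $0.0003$. By contrast, \cref{lemma:big:close,lemma:two:edges} play almost no role in (v)--(vii); they matter chiefly for \cref{sec:big}.
\bigskip
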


In addition to this, we will prove in \cref{sec:big} that there is no $\face\in\Faces$ with $\abs{\face}\geq 42$. 
Then \cref{prop:main} is enough to conclude that $\#\Verts\leq 210$, as we show in \cref{sec:210}. 
Moreover, it implies that $\#\Verts=210$ if and only if for all $v\in\Verts$ we have $\vtype(v)\in\{(3,3,5,6),(5,6,7)\}$, but we show in \cref{sec:209} that this is impossible.
With a more accurate estimation of the local contributions to $\c(G)$, and of the global obstructions, it could be possible to apply \cref{lemma:main} in all cases, thus establishing $\#\Verts\leq 208$. 
However, this would have made the case analysis of this article even longer, so we decided to conclude otherwise, in a nicer way, using the notion of \red-triangles (see \cref{def:red}).
In \cref{sec:208} we discover that \red-triangles contained in very large PCC graphs tend to organize in a cyclical pattern.
Using this phenomenon we are able to prove that if there exists a PCC graph $G$ with 209 vertices, then there exists another PCC graph $G'$ with at least 210 vertces. This will conclude the proof of \cref{thm:main}.

The most delicate part of our proof is the definition of a discharging pairing $\pair:\Verts\times\FFaces\to\Q_{\geq 0}$ that is suitably optimized for our goals.  
The full construction of $\pair$ is quite elaborate, and will require two full sections to be described. 
However, most of the special discharging rules are actually introduced only to overcome the difficulties that we will encounter in \cref{sec:11,sec:4041} 
(i.e. for the cases $\abs{\face}\in\{11,40,41\}$ of \cref{prop:main}).  
A much shorter proof could be given, say, just for the upper bound $\#\Verts\leq 264$ (see \cref{sec:pair:rmk} for the description of simplified versions of $\pair$). 

\subsection{Some remarks on the method}\label{sec:main:rmk}

\subsubsection{Local-to-global inequalities} 
In the paragraphs of this section we explain on a more conceptual level the discharging strategy that we outlined above. 
One first fundamental observation is that the total combinatorial curvature of a finite planar graph is equal to 2 by \cref{eq:total_curvature}. 
Then the ``global statement'' $\#\Verts\leq 208$ of the main theorem can be rewritten as 
\[
 \op{avg} K := \frac 1 {\#\Verts} \sum_{v\in\Verts} K(v)\geq \frac 2 {208}.
\]
Since $\#\Verts$ is an integer, it is actually sufficient to prove the strict inequality $\op{avg} K> 2/209\approx 0.0095$.  
The quantity $\op{avg} K$ can be estimated via ``local computations''. 
For example, in view of our theorem \cref{thm:big} and of the list of admissible face vectors listed in \cref{table:admissible}, we have that the curvature at a vertex $v\in \Verts$ is minimal if $\vtype(v)= (3,7,41)$. 
In this case we have $K(v)=1/1722\approx 0.00058$, and so $\op{avg} K\geq \min K\geq 2/3444$, or $\#\Verts \leq 3444$. 
Here we notice that the local-to-global inequality $\op{avg} K\geq \min K$ does not furnish a sharp estimate of $\op{avg} K$ because the function $K:\Verts\to\R$ is allowed to vary considerably in magnitude: 
it can be as small as $K(v)\approx 0.00058$, as we just saw, and it can get as big as $K(v)=0.5$, when $\vtype(v)=(3,3,3)$. 

\subsubsection{Transportation of curvature} 
As we mentioned in the introduction, the discharging method is a general-purpose technique to obtain more refined local-to-global estimates as the one above. 
A basic approach towards discharging is the \emph{redistribution of the curvature} among the vertices: every vertex $v\in\Verts$ ``gives'' part of its curvature  to the neighboring vertices. 
The result of this process is a new function $K':\Verts\to\R$ with $\op{avg} K'=\op{avg} K$. 
If the discharging rules are designed properly, then we would also have $\min K'>\min K$ because the vertices $v\in \Verts$ with the least value of $K(v)$ would receive some extra curvature from the neighboring vertices. 
Moreover the values of $K'$ could be still determined by a ``local computation'', where we analyze the possible combinatorial configurations in the neighborhood of vertices. 
On a formal level, the ``discharging weights'' used in the above process can be encoded as a function $\pair_{\text{flow}}:\Verts\times\Verts\to \R_{\geq 0}$ that satisfies
$
 \sum_{w\in\Verts}\pair_{\text{flow}}(v,w) = 1
$. 
In transportation theory parlour we may view $K$ as a discrete measure on $\Verts$ with total mass equal to $2$ and $\pair_{\text{flow}}$ as a \emph{transportation plan} from $K$ to a ``smoother'' measure $K'$ on $\Verts$. 

\subsubsection{Discharging as fuzzy partitioning}
There are many ways of ``spreading out'' as above the combinatorial curvature, making it flow from vertices with large curvature to vertices with small curvature. 
One natural way of doing so is via discrete analogs of the so-called Ricci flow. 
However, it seems to the author that these methods do not easily imply estimates that are as precise as the ones that are needed in the present manuscript. 
Another basic idea is to \emph{partition the vertices} of the graph so that in each partition the ``bad'' vertices with small curvature appear together with the ``good'' vertices with large curvature. 
If the partitions are indexed by a set $\mcl I$, then the partitioning $\{\Verts_i\}_{i\in\mcl I}$ of $\Verts$ can be encoded 
with the boolean function $\pair_{\text{part}}:\Verts\times\mcl I \to \{0,1\}$ such that $\pair_{\text{part}}(v,i)=1$ if $v\in\Verts_i$.   
Then $\op{avg} K$ can be computed as a weighted mean of the averages $K'(i):=\op{avg} K|_{\Verts_i}$ of the function $K$ restricted to the $\Verts_i$.  
If the partitions are somewhat constructed on the basis of ``local'' graph-theoretic configurations, then $\op{avg} K\geq \min_{i\in\mcl I} K'$ could be seen as another local-to-global inequality.
Yet greater flexibility is achieved if we summarize the above two approaches by considering a function $\pair_{\text{fuzzy}}:\Verts\times\mcl W\to \R_{\geq 0}$ that satisfies
$$
 \sum_{w\in\mcl W}\pair_{\text{flow}}(v,w) = 1.
$$ 
This function can be seen as a transportation plan between the measure $K$ on the set $\Verts$ and its push-forward on \emph{another set} $\mcl W$. 
Alternatively, we may see it as a \emph{fuzzy} partitioning of the set $\Verts$ in which the element $v\in\Verts$ belongs to the ``partition'' $w\in\mcl W$ with ``probability'' $\pair_{\text{fuzzy}}(v,w)$.


\section{The pairing: part 1}\label{sec:pair:1}

We will define the pairing $\pair$ as the pointwise sum of two functions $\pair_1,\pair_2:\,\Verts\times\FFaces\to\Q_{\geq 0}$. 
In this section we construct the function $\pair_1$, whereas $\pair_2$ will be defined in \cref{sec:pair:2}. 
See \cref{sec:pair:rmk} to read about some heuristics for the complicated definition of $\pair$.

\subsection{Seven types of vertices}

In order to better describe the construction of $\pair$, we divide the vertices into seven categories, which we call \emph{types}:
 $\ddface$-vertices, $\dface$-vertices, big vertices, regular vertices, semi-regular vertices, TS-vertices and potentially-special vertices.
In the next paragraphs we will explain how to assign a type to each $v\in \Verts$, the attribution depending solely on the face vector $\vtype(v)$. 
Meanwhile, we will describe for every $v\in\Verts$ how to build the functions $(\pair_1)|_{\{v\}\times \FFaces}$ and $(\pair_2)|_{\{v\}\times \FFaces}$.
The first will usually depend only on $\vfaces(v)$, while the second will often depend on more detailed geometric and combinatorial configurations surrounding the vertex $v$.
The following facts will be true:
\begin{description}
\item[(F1)]
for every $\ddface$-vertex $v$ we have $\pair_1(v,\ddface)=1$;
\item[(F2)]
for every $\dface$-vertex $v$ we have $\pair_1(v,\dface)=1$;
\item[(F3)]
for every regular vertex $v$ there is exactly one $\face\in\vfaces(v)$ with $\pair_1(v,\face)=1$;
\item[(F4)]
for every semi-regular vertex $v$ and every $\face\in\FFaces$ we have $\pair_2(v,\face)=0$; moreover if $\pair_1(v,\face)\neq 0$, then $\face\in\vfaces(v)$ or $\face=\ddface$;
\item[(F5)]
for every TS-vertex $v$ and every $\face\in\FFaces$ we have $\pair_1(v,\face)=0$.
\end{description}

The case of TS-vertices and potentially-special vertices will be longer to describe, so we will take care of them in the next section.
 
\subsection{$\dface$-vertices}\label{sec:pair:1:dface}

We say that $v\in\Verts$ is a $\dface$-vertex if and only if  $\vtype(v)=(5,6,7)$ or $\vtype(v)=(3,3,5,7)$. For every $\dface$-vertex $v$ we define $\pair_1(v,\dface)=1$ and $\pair_2(v,\dface)=0$, while for every $\face\in\FFaces$ with $\face\neq\dface$ we define $\pair_1(v,\face)=\pair_2(v,\face)=0$. In \cref{table:dface} we list the possible face vectors of $\dface$-vertices, together with a mnemonic for their pairings.

\begin{table}[H]\centering
\begin{tabular}{cccc}
\toprule
$\vtype(v)$
	\ \ \ &\ \ \ 
 
	\ \ \ &\ \ \ 
$\pair_1(v,\cdot)$
	\ \ \ &\ \ \ 
$\pair_2(v,\cdot)$
\\

\midrule

$(5,6,7)$
	\ \ \ &\ \ \ 

\ \ \ &\ \ \ 
$1\cdot [\dface]$
\ \ \ &\ \ \ 
$-$
\\

\midrule

$(3,3,5,7)$
	\ \ \ &\ \ \ 

\ \ \ &\ \ \ 
$1\cdot [\dface]$
\ \ \ &\ \ \ 
$-$
\\

\bottomrule
\end{tabular}
\caption{Mnemonics for $\dface$-vertices.}
\label{table:dface}
\end{table}

\subsection{$\ddface$-vertices}

We say that $v\in\Verts$ is a $\ddface$-vertex if and only if  $\vtype(v)$ is listed in the following table. For every $\ddface$-vertex $v$ we define $\pair_1(v,\ddface)=1$ and $\pair_2(v,\ddface)=0$, while for every $\face\in\FFaces$ with $\face\neq\ddface$ we define $\pair_1(v,\face)=\pair_2(v,\face)=0$. In \cref{table:ddface} we list the possible face vectors of $\ddface$-vertices, together with a mnemonic for their pairings.

\begin{table}[H]\centering
\begin{tabular}{cccc}
\toprule
$\vtype(v)$
	\ \ \ &\ \ \ 
where 
	\ \ \ &\ \ \ 
$\pair_1(v,\cdot)$
	\ \ \ &\ \ \ 
$\pair_2(v,\cdot)$
	\\

\midrule

$(3,3,a)$
	\ \ \ &\ \ \ 
$5\leq a\leq 10$
\ \ \ &\ \ \ 
$1\cdot [\ddface]$
\ \ \ &\ \ \ 
$-$
\\

\midrule

$(3,5,a)$
	\ \ \ &\ \ \ 
$5\leq a\leq 10$
\ \ \ &\ \ \ 
$1\cdot [\ddface]$
\ \ \ &\ \ \ 
$-$
\\

\midrule

$(3,6,a)$
	\ \ \ &\ \ \ 
$6\leq a\leq 10,\ a\neq 7$
\ \ \ &\ \ \ 
$1\cdot [\ddface]$
\ \ \ &\ \ \ 
$-$
\\

\midrule

$(3,a,12)$
	\ \ \ &\ \ \ 
$5\leq a\leq 10$
\ \ \ &\ \ \ 
$1\cdot [\ddface]$
\ \ \ &\ \ \ 
$-$
\\

\midrule

$(3,a,13)$
	\ \ \ &\ \ \ 
$5\leq a\leq 10$
\ \ \ &\ \ \ 
$1\cdot [\ddface]$
\ \ \ &\ \ \ 
$-$
\\

\midrule

$(3,a,19)$
	\ \ \ &\ \ \ 
$a\in\{3,6,7,8\}$
\ \ \ &\ \ \ 
$1\cdot [\ddface]$
\ \ \ &\ \ \ 
$-$
\\

\midrule

$(4,4,a)$
	\ \ \ &\ \ \ 
$6\leq a\leq 41,\, a\not\in\{7,11,13,19\}$
\ \ \ &\ \ \ 
$1\cdot [\ddface]$
\ \ \ &\ \ \ 
$-$
\\

\midrule

$(4,6,a)$
	\ \ \ &\ \ \ 
$a\in\{6,8,9,10\}$
\ \ \ &\ \ \ 
$1\cdot [\ddface]$
\ \ \ &\ \ \ 
$-$
\\

\midrule

$(5,5,a)$
	\ \ \ &\ \ \ 
$5\leq a\leq 9$
\ \ \ &\ \ \ 
$1\cdot [\ddface]$
\ \ \ &\ \ \ 
$-$
\\

\midrule

$(5,6,6)$
	\ \ \ &\ \ \ 

\ \ \ &\ \ \ 
$1\cdot [\ddface]$
\ \ \ &\ \ \ 
$-$
\\

\midrule

$(3,3,3,19)$
	\ \ \ &\ \ \ 

\ \ \ &\ \ \ 
$1\cdot [\ddface]$
\ \ \ &\ \ \ 
$-$
\\

\midrule

$(3,3,4,a)$
	\ \ \ &\ \ \ 
$8\leq a\leq 10$
\ \ \ &\ \ \ 
$1\cdot [\ddface]$
\ \ \ &\ \ \ 
$-$
\\

\midrule

$(3,3,5,a)$
	\ \ \ &\ \ \ 
$a\in\{5,6\}$
\ \ \ &\ \ \ 
$1\cdot [\ddface]$
\ \ \ &\ \ \ 
$-$
\\

\bottomrule
\end{tabular}
\caption{Mnemonics for $\ddface$-vertices.}
\label{table:ddface}
\end{table}

\subsection{Big vertices}

We say that $v\in\Verts$ is a big vertex if and only if there is $N\in\vtype(v)$  with $N\geq 42$.
For every big-vertex $v$ we define $\pair_1(v,\ddface)=1$ and $\pair_2(v,\ddface)=0$, while for every $\face\in\FFaces$ with $\face\neq\ddface$ we define $\pair_1(v,\face)=\pair_2(v,\face)=0$. 
In \cref{table:big} we list the possible face vectors of big-vertices, together with a mnemonic for their pairings.%

\begin{table}[H]\centering
\begin{tabular}{cccc}
\toprule
$\vtype(v)$
	\ \ \ &\ \ \ 
where 
	\ \ \ &\ \ \ 
$\pair_1(v,\cdot)$
	\ \ \ &\ \ \ 
$\pair_2(v,\cdot)$
	\\

\midrule

$(3,a,N)$
	\ \ \ &\ \ \ 
$3\leq a\leq 6$, $N\geq 42$
\ \ \ &\ \ \ 
$1\cdot [\ddface]$
\ \ \ &\ \ \ 
$-$
\\

\midrule

$(4,4,N)$
	\ \ \ &\ \ \ 
$N\geq 42$
\ \ \ &\ \ \ 
$1\cdot [\ddface]$
\ \ \ &\ \ \ 
$-$
\\

\midrule

$(3,3,3,N)$
	\ \ \ &\ \ \ 
$N\geq 42$
\ \ \ &\ \ \ 
$1\cdot [\ddface]$
\ \ \ &\ \ \ 
$-$
\\

\bottomrule
\end{tabular}
\caption{Mnemonics for big vertices.}
\label{table:big}
\end{table}

We remark that, although \cref{table:admissible} shows that the above face vectors are arithmetically admissible, we will prove in \cref{thm:big} that actually a PCC cannot contain any big vertex. 

\subsection{Regular vertices}

We say that $v\in\Verts$ is a regular vertex if and only if  $\vtype(v)$ is listed in the following table. For every regular vertex $v$ we choose an integer $n_v$ appearing in $\vtype(v)$ with multiplicity one.
Then there exist a well defined $\face_v\in\vfaces(v)$ with $\abs{\face_v}=n_v$. We define $\pair_1(v,\face_v)=1$, $\pair_2(v,\face_v)=0$ and $\pair_1(v,\face)=\pair_2(v,\face)=0$ for every $\face\in\FFaces$ with $\face\neq\face_v$. In \cref{table:regular} we list the possible face vectors of regular vertices, together with a mnemonic for their pairings. More precisely, in the column relative to $\pair_1$ we write $1\cdot [n_v]$, where $n_v$ is the element of $\vtype(v)$ that we choose.

\begin{table}[H]\centering
\begin{tabular}{cccc}
\toprule
$\vtype(v)$
	\ \ \ &\ \ \ 
where 
	\ \ \ &\ \ \ 
$\pair_1(v,\cdot)$
	\ \ \ &
$\pair_2(v,\cdot)$
	\\

\midrule

$(3,3,a)$
	\ \ \ &\ \ \ 
$13\leq a\leq 41,\ a\neq 19$
\ \ \ &\ \ \ 
$1\cdot [a]$
\ \ \ &\ \ \ 
$-$
\\

\midrule

$(3,5,a)$
	\ \ \ &\ \ \ 
$a\in\{40,41\}$
\ \ \ &\ \ \ 
$1\cdot [a]$
\ \ \ &\ \ \ 
$-$
\\

\midrule

$(3,6,a)$
	\ \ \ &\ \ \ 
$14\leq a\leq 41,\ a\neq 19$
\ \ \ &\ \ \ 
$1\cdot [a]$
\ \ \ &\ \ \ 
$-$
\\

\midrule

$(3,7,a)$
	\ \ \ &\ \ \ 
$14\leq a\leq 41,\ a\neq 19$
\ \ \ &\ \ \ 
$1\cdot [a]$
\ \ \ &\ \ \ 
$-$
\\

\midrule

$\{3,a,11\}$
	\ \ \ &\ \ \ 
$6\leq a\leq 12,\ a\neq 11$
\ \ \ &\ \ \ 
$1\cdot [11]$
\ \ \ &\ \ \ 
$-$
\\

\midrule

$(3,8,a)$
	\ \ \ &\ \ \ 
$14\leq a\leq 22,\ a\neq 19$
\ \ \ &\ \ \ 
$1\cdot [a]$
\ \ \ &\ \ \ 
$-$
%
\\
\midrule

$(3,9,a)$
	\ \ \ &\ \ \ 
$8\leq a\leq 10,\ 14\leq a\leq 17$
\ \ \ &\ \ \ 
$1\cdot [a]$
\ \ \ &\ \ \ 
$-$
\\

\midrule

$(3,10,14)$
	\ \ \ &\ \ \ 

\ \ \ &\ \ \ 
$1\cdot [a]$
\ \ \ &\ \ \ 
$-$
\\

%

\midrule

$(4,4,a)$
	\ \ \ &\ \ \ 
$a\in\{5,7,11,13,19\}$
\ \ \ &\ \ \ 
$1\cdot [a]$
\ \ \ &\ \ \ 
$-$
\\

\midrule 

$(4,5,a)$
	\ \ \ &\ \ \ 
$8\leq a\leq 18,\ a\neq 11$
\ \ \ &\ \ \ 
$1\cdot [5]$
\ \ \ &\ \ \ 
$-$
\\

\midrule

$(4,6,7)$
	\ \ \ &\ \ \ 

\ \ \ &\ \ \ 
$1\cdot [7]$
\ \ \ &\ \ \ 
$-$
\\

\midrule

$(4,6,11)$
	\ \ \ &\ \ \ 

\ \ \ &\ \ \ 
$1\cdot [11]$
\ \ \ &\ \ \ 
$-$
\\

\midrule

$(4,7,a)$
	\ \ \ &\ \ \ 
$8\leq a\leq 9$
\ \ \ &\ \ \ 
$1\cdot [7]$
\ \ \ &\ \ \ 
$-$
\\

\midrule
 
$(3,3,3,a)$
	\ \ \ &\ \ \ 
$13\leq a\leq 41,\ a\neq 19$
\ \ \ &\ \ \ 
$1\cdot [a]$
\ \ \ &\ \ \ 
$-$
\\

\midrule

$(3,3,4,11)$
	\ \ \ &\ \ \ 
\ \ \ &\ \ \ 
$1\cdot [11]$
\ \ \ &\ \ \ 
$-$
\\

\bottomrule
\end{tabular}
\caption{Mnemonics for regular vertices.}
\label{table:regular}
\end{table}

\subsection{Semi-regular vertices}

We say that $v\in\Verts$ is a semi-regular vertex if and only if  $\vtype(v)$ is listed in the following table. We define $\pair_2(v,\face)=0$ for every semi-regular vertex $v$ and every $\face\in\FFaces$. For every semi-regular vertex $v$ we choose either only one integer $n_v\in\vtype(v)$, or a rational number $0 \less r_v \less 1$ and two integers $m_v,n_v$ appearing in $\vtype(v)$. Four possibilities can occur.
\begin{enumerate}[(i)]
\item 
If we choose only one integer $n_v$ appearing in $\vtype(v)$ with multiplicity one, then there exists a well defined $\face_v\in\vfaces(v)$ with $\abs{\face_v}=n_v$. We define $\pair_1(v,\face_v)=\frac{1}{2}$, $\pair_1(v,\ddface)=\frac{1}{2}$ and $\pair_1(v,\face)=0$ for every $\face\in\FFaces\setminus\{\face_v,\ddface\}$.
\item
If we choose only one integer $n_v$ appearing in $\vtype(v)$ with multiplicity two, it may happen that there exists a single face $\face_v\in\Faces$ with $\abs{\face_v}=n_v$ appearing in $\vfaces(v)$ with multiplicity two. In this case we define $\pair_1(v,\face_v)=\frac{1}{2}+\frac{1}{2}=1$, and $\pair_1(v,\face)=0$ for every $\face\in\FFaces\setminus\{\face_v\}$.
\item 
If we choose only one integer $n_v$ appearing in $\vtype(v)$ with multiplicity two, it may also happen that there exist two distinct elements $\face_{v,1},\face_{v,2}\in\vfaces(v)$ with $\abs{\face_{v,1}}=\abs{\face_{v,2}}=n_v$ each appearing with multiplicity one. In this case we define $\pair_1(v,\face_{v,1})=\pair_1(v,\face_{v,2})=\frac{1}{2}$, and $\pair_1(v,\face)=0$ for every $\face\in\FFaces\setminus\{\face_{v,1},\face_{v,2}\}$.
\item
If we choose two integers $m_v,n_v$ appearing in $\vtype(v)$, each with multiplicity one, then there exist two well defined and distinct elements $\face_{v,1},\face_{v,2}\in\vfaces(v)$ with $\abs{\face_{v,1}}=m_v$, $\abs{\face_{v,2}}=n_v$. In this case we define $\pair_1(v,\face_{v,1})=r_v$, $\pair_1(v,\face_{v,2})=1-r_v$, and $\pair_1(v,\face)=0$ for every $\face\in\FFaces\setminus\{\face_{v,1},\face_{v,2}\}$.
\end{enumerate} 
In \cref{table:semi-regular} we list the possible face vectors of semi-regular vertices, together with a mnemonic for their pairings. More precisely, in case (i) we write $\frac{1}{2} [n_v]+\frac{1}{2}[\ddface]$, in case (ii) and (iii) we write $\frac{1}{2} [n_v]+\frac{1}{2}[n_v]'$, and in case (iv) we write $r_v [m_v]+(1-r_v)[n_v]$, where $m_v,n_v$ and $r_v$ are the chosen numbers. In most of the cases the choices depend only on $\vtype(v)$. Only if $\vtype(v)=(3,11,13)$ the number $r_v$ depends on $v$ being an $\alpha$-vertex or a $\beta$-vertex (see \cref{def:blue}).

\begin{table}[H]\centering
\begin{tabular}{cccc}
\toprule
$\vtype(v)$
	\ \ \ &\ \ \ 
where 
	\ \ \ &\ \ \ 
$\pair_1(v,\cdot)$
	\ \ \ &\ \ \ 
$\pair_2(v,\cdot)$
	\\

\midrule

$(3,5,11)$
	\ \ \ &\ \ \ 

\ \ \ &\ \ \ 
$\frac{1}{2} [11]+\frac{1}{2}[\ddface]$
\ \ \ &\ \ \ 
$-$
\\

\midrule

$(3,5,a)$
	\ \ \ &\ \ \ 
$14\leq a\leq 19$
\ \ \ &\ \ \ 
$\frac{1}{2} [5]+\frac{1}{2}[a]$
\ \ \ &\ \ \ 
$-$
\\

\midrule

$(3,5,a)$
	\ \ \ &\ \ \ 
$20\leq a\leq 39$
\ \ \ &\ \ \ 
$\frac{1}{2} [a]+\frac{1}{2}[\ddface]$
\ \ \ &\ \ \ 
$-$
\\

\midrule

$(3,11,11)$
	\ \ \ &\ \ \ 

\ \ \ &\ \ \ 
$\frac{1}{2} [11]+\frac{1}{2}[11]'$
\ \ \ &\ \ \ 
$-$
\\

\midrule

$(3,11,13)$
	\ \ \ &\ \ \ 
$v$ is $\alpha$-vertex
\ \ \ &\ \ \ 
$\frac{1}{7} [11]+\frac{6}{7}[13]$
\ \ \ &\ \ \ 
$-$
\\

\midrule

$(3,11,13)$
	\ \ \ &\ \ \ 
$v$ is $\beta$-vertex
\ \ \ &\ \ \ 
$\frac{3}{7} [11]+\frac{4}{7}[13]$
\ \ \ &\ \ \ 
$-$
\\

\midrule

$(4,5,5)$
	\ \ \ &\ \ \ 

\ \ \ &\ \ \ 
$\frac{1}{2} [5]+\frac{1}{2}[5]'$
\ \ \ &\ \ \ 
$-$
\\

\midrule

$(4,5,a)$
	\ \ \ &\ \ \ 
$a\in\{7,11\}$
\ \ \ &\ \ \ 
$\frac{1}{2} [5]+\frac{1}{2}[a]$
\ \ \ &\ \ \ 
$-$
\\

\midrule

$(4,5,19)$
	\ \ \ &\ \ \ 

\ \ \ &\ \ \ 
$\frac{3}{4} [5]+\frac{1}{4}[19]$
\ \ \ &\ \ \ 
$-$
\\

\midrule

$(4,7,7)$
	\ \ \ &\ \ \ 

\ \ \ &\ \ \ 
$\frac{1}{2} [7]+\frac{1}{2}[7]'$
\ \ \ &\ \ \ 
$-$
\\

\bottomrule
\end{tabular}
\caption{Mnemonics for semi-regular vertices.}
\label{table:semi-regular}
\end{table}

We remark that if $\vtype(v)=(4,5,5)$ the possibility (ii) above cannot occur, by \cref{lemma:distinct}. 
It is possible to show that the same is true in case $\vtype(v)=(4,7,7)$, and that (ii) can occur for $\vtype(v)=(3,11,11)$ in only a  very limited list of PCC graphs. 
Anyway, this more precise analysis would not simplify our proof.

\subsection{Prototype for our discharging function}\label{sec:pair:rmk}
In this paragraph we spend some words to motivate the construction of the pairing $\pair:\Verts\times\FFaces\to\Q_{\geq 0}$.
We recall from \cref{sec:main:rmk} that, for our purposes, the ``bad'' vertices are those whose curvature is less than $2/209$. 
These are the vertices $v\in\Verts$ such that $\vtype(v)\in\mcl A_{4}\cup\mcl A_{5/7}\cup\mcl A_{7}\cup\mcl A_{8}\cup\mcl A_{9/10}\cup\mcl A_{11}$, where
\begin{align*}
 \mcl A_4 &:= \{(4,5,18),(4,5,19),(4,7,9)\};\\
 \mcl A_{5/7} &:= \{(5,6,7), (3,3,5,7)\};\\
 \mcl A_7 &:= \{(3,7,N):\ 30\leq N\leq 41\};\\
 \mcl A_8 &:= \{(3,8,N):\ 20\leq N\leq 23\};\\
 \mcl A_{9/10} &:= \{(3,9,16),(3,9,17),(3,10,14)\};\\
 \mcl A_{11} &:= \{(3,11,12),(3,11,13),(3,3,4,11), (6,4,11)\}.
\end{align*}
Our idea, for a nearly-optimal discharging of the curvature, is to fuzzy-partition the set $\Verts$ so that the partitions roughly correspond to the faces
\[
 \Faces_{\text{naive}}:= \{\face\in\Faces:\ \abs{\face}\in\{5,7,11\}\cup\{13,\dots,41\}\}.
\]
It is naturally attached to $\Faces_{\text{naive}}$ the pairing $\pair_{\text{naive}}:\Verts\times\Faces_{\text{naive}}\cup\{\ddface\}\to \Q_{\geq 0}$ such that: 
\begin{itemize}
 \item $\pair_{\text{naive}}(v,\face)=1$ if $v$ belongs to $\fverts(\face)$ for exactly one $\face\in\Faces_{\text{naive}}$; 
 \item $\pair_{\text{naive}}(v,\ddface)=1$ if $v$ doesn't belong to $\fverts(\face)$ for any $\face\in\Faces_{\text{naive}}$; 
 \item $\pair_{\text{naive}}(v,\face_i)=1/m$ whenever $v$ belongs to $\fverts(\face_i)$ for $m>1$  distinct faces $\face_1,\dots,\face_m\in\Faces_{\text{naive}}$. 
\end{itemize}
In particular $\ddface$ collects only vertices that are ``good''. 
The discharging rules for $\pair_{\text{naive}}$ should be compared with the construction of $\pair_1$, while $\Faces_{\text{naive}}$ should be compared with $\FFaces$.  
Notice that most of the faces in $\Faces_{\text{naive}}$ have odd cardinality: this ensures, for example, that it is not possible to have fuzzy-partitions consisting of only ``bad'' vertices with $\vtype(v)\in\mcl A_4$. 
However, it is possible to arrange $\dface$-vertices (i.e. $\vtype(v)\in\mcl A_{5/7}$) around a pentagon. 
That is why we take care of these vertices separately in \cref{sec:pair:1:dface} above and \cref{sec:210,sec:209,sec:208} below.  
Moreover, a priori it is also possible to arrange, say, only vertices with $\vtype(v)=(3,8,22)$ around a 22-agon (or with $f(v)=(3,7,41)$ around a 41-agon, etc.). 
However, when this happens then we necessarily have some ``good'' vertices with $\vtype(v)=(3,8,8)$ not far from the boundary of the 22-agon. 
Therefore it is advisable to add these vertices to the fuzzy-partition corresponding to the 22-agon (compare the rule \cref{rule:3ab} below and \cref{sec:N}). 
This order of ideas motivates the creation of ``special'' discharging rules as in the next section. 
Last but not least, in order to obtain extremely accurate estimates (compare e.g. the tight inequalities in \cref{sec:13}, where $c(\face)$ is proved to be only barely positive!) we carefully optimized some mission-critical discharging weights. 
For example, an $\alpha$-vertex only discharges $1/7$ of its curvature to the 11-sided face adjacent to it, while a vertex $v$ with $\vtype(v)=(4,5,19)$ discharges $3/4$ of its curvature to the pentagon and only $1/4$ to the 19-agon in $\vfaces(v)$.


\section{The pairing: part 2}\label{sec:pair:2}

A vertex $v\in\Verts$ is called \emph{special} if there exists $\face\in\FFaces\setminus\{\dface,\ddface\}$ with $\pair_2(v,\face)\neq 0$. We will also say that \emph{$v$ is special to $\face$} in this case. Only TS-vertices and potentially-special vertices can be special.
In the following paragraphs we will describe the necessary and sufficient conditions for a vertex to be special.
In order to simplify the exposition, we will indicate the values of $\pair_1(v,\face)$ and $\pair_2(v,\face)$, for $(v,\face)\in\Verts\times\FFaces$, only when they are nonzero.

\specialeTS\label{rule:TS}
A vertex $v\in\Verts$ is a \emph{TS-vertex} if and only if every face $\face\in\vfaces(v)$ is a triangle or a square; in other words, if $\forall n\in\vtype(v)$ we have $n\in\{3,4\}$. For a TS-vertex $v$ we consider:
\[
\begin{aligned}
\ETS(v)&:=\{e\in\Edges\setminus\vedges(v):\ \exists \tau\in\vfaces(v) \text{ with } e\in \fedges(\tau)\},\\  
\FTS(v)&:=\{\face\in\fverts(\face):\ \abs{\face}\in\{11,40,41\}\text{ and }\exists e\in\ETS(v) \text{ with } e\in \fedges(\face)\},\\
\end{aligned}
\]
and we let $\mTS(v):=\#\ETS(v)$ and $\nTS(v):=\#\FTS(v)$.

\begin{lemma}
If $v$ is a TS-vertex, we have $\nTS(v)\leq 3$.
\end{lemma}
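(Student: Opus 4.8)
The plan is to interpret $\ETS(v)$ as (essentially) the boundary cycle of the closed star of $v$, to bound $\mTS(v)$ using the table of admissible face vectors, and then to exploit how few admissible vertices can lie on two large faces simultaneously.

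First, since $v$ is a TS-vertex, every $\tau\in\vfaces(v)$ has $\abs\tau\in\{3,4\}$, so by \cref{lemma:distinct} the multiset $\fverts(\tau)$ is genuinely a set, $v$ occurs once on $\partial\tau$, and exactly $\abs\tau-2$ edges of $\tau$ avoid $v$. Hence
\[
  \mTS(v)=\#\ETS(v)\le\sum_{\tau\in\vfaces(v)}(\abs\tau-2)=\sum_{k\in\vtype(v)}(k-2),
\]
and inspecting in \cref{table:admissible} the face vectors all of whose entries lie in $\{3,4\}$ — recalling $\deg(v)\le 5$ and that $(4,4,4,4)$ and $(3,3,3,4,4)$ have curvature $0$ — gives $\mTS(v)\le 7$, with equality only for $\vtype(v)=(3,4,4,4)$.

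Next I would bound the number $s$ of vertices that are endpoints of edges of $\ETS(v)$. I claim every such vertex $w$ has degree at least $2$ in the subgraph $\ETS(v)$: if $vw\in\Edges$, then $w$ lies on the two faces of $\vfaces(v)$ flanking the spoke $vw$, and the side of each of these faces at $w$ other than $vw$ belongs to $\ETS(v)$ — two distinct such sides, since $\deg(w)\ge 3$; and if $vw\notin\Edges$, then $w$ is the vertex opposite $v$ on some square $\kappa\in\vfaces(v)$ and both sides of $\kappa$ at $w$ lie in $\ETS(v)$. Since $\sum_w\deg_{\ETS(v)}(w)=2\,\#\ETS(v)$, we get $2s\le 2\,\#\ETS(v)$, i.e. $s\le\mTS(v)\le 7$. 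The crux is then that no such $w$ can lie on two distinct faces of $\FTS(v)$: such a $w$ lies on some $\tau\in\vfaces(v)$ with $\abs\tau\le 4$, and if it also lay on distinct $\sigma,\sigma'\in\FTS(v)$ then $\vtype(w)$ would contain two entries $\ge 11$ together with the entry $\abs\tau\le 4$; but scanning \cref{table:admissible}, the only admissible face vectors with two entries $\ge 11$ are $(3,11,11)$, $(3,11,12)$ and $(3,11,13)$, each with a single entry $\le 4$ (namely $3$), forcing $\tau$ to be a triangle $vab$, so $w\in\{a,b\}$ and $vw\in\Edges$; by the argument above $w$ then lies on two faces of $\vfaces(v)$, forcing two entries $\le 4$ in $\vtype(w)$ — a contradiction.

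Finally I would count incidences between $\FTS(v)$ and the vertices that are endpoints of edges of $\ETS(v)$: each $\sigma\in\FTS(v)$ contains an edge of $\ETS(v)$, hence at least two such vertices, giving at least $2\,\nTS(v)$ incidences, while each vertex contributes at most one by the previous step, giving at most $s\le 7$ incidences; therefore $2\,\nTS(v)\le 7$ and $\nTS(v)\le 3$. I expect the only genuine obstacle to be the step excluding two large faces at a common vertex, which uses essentially both that $v$ is a TS-vertex (so all faces around $v$ are small) and the tightness of \cref{table:admissible} near size $11$; the remaining steps are elementary Euler-type counting.
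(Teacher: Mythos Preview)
Your proof is correct and follows essentially the same strategy as the paper's: bound $\mTS(v)\le 7$ from the admissible TS face vectors, then use \cref{table:admissible} to rule out a boundary vertex of the closed star seeing two distinct faces of $\FTS(v)$. The paper packages the last step via the cyclic ordering of $\ETS(v)$ (two consecutive edges cannot lie on distinct large faces, since their common vertex would be inadmissible, whence $\nTS(v)\le\lfloor 7/2\rfloor$), whereas you reach the same conclusion through a vertex--face incidence count, at the cost of the auxiliary bound $s\le\mTS(v)$; both routes exploit the identical admissibility obstruction.
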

\begin{proof}
We say that $e,e'\in\ETS(v)$ with $e\neq e'$ are consecutive, if they have a common vertex, i.e. if $\everts(e)\cap\everts(e')\neq\emptyset$. It's easy to show, with \cref{lemma:distinct:path:2}, that we can cyclically order the elements of $\ETS(v)$ as $e_1,\ld,e_{\mTS(v)}=:e_0$, so that $e_{i-1},e_{i}$ are consecutive, for $1\leq i\leq \mTS(v)$.
By \cref{lemma:distinct} we notice that a triangle $\tau\in\vfaces(v)$ has exactly one edge $e\in\fedges(\tau)\setminus\vedges(v)$, while a square has exactly two. By \cref{table:admissible}, we deduce that $\mTS(v)\leq 7$. Indeed, one could show, with \cref{lemma:distinct:path:2}, that we have $\mTS(v)=7$ if and only if $\vtype(v)=(3,4,4,4)$.
Moreover, we see that if $\face_1,\face_2\in\FTS(v)$ satisfy $e_{i-1}\in\fedges(\face_1)$, $e_{i}\in\fedges(\face_2)$ for some $1\leq i\leq \mTS(v)$, we must have $\face_1=\face_2$, because otherwise for $w\in\everts(e_{i-1})\cap\everts(e_{i})$ we would have $\{\face_1,\face_2\}\subseteq \vfaces(w)$, which is not admissible in a PCC graph.
Since $\mTS(v)\leq 7$, we deduce that $\nTS(v)\leq 3$.
\end{proof}

We now describe the pairing for $v$. If $\nTS(v)\geq 1$, then for every $\face\in\FTS(v)$ we set $\pair_2(v,\face)=\frac 1 3$, so $v$ is special to $\face$.
If otherwise $\nTS(v)=0$, then $v$ is not special.
Regardless of $v$ being special or not, we set $\pair_2(v,\ddface)=1-\frac{\nTS(v)} 3$.
In \cref{table:TS} we provide mnemonics for the above pairing rule.

\begin{table}[H]\centering
\begin{tabular}{ccccl}
\toprule
$\vtype(v)$
	\ &\ \ \ 
$\pair_1(v,\cdot)$
	\ \ \ &\ \ \ 
$\pair_2(v,\cdot)$
	\\

\midrule

$\{$only 3,4$\}$

 \ &\ \ \ 
$-$
\ \ \ &\ \ \ 
$\nTS(v)\times\left(\frac 1 3 \cdot [11/40/41]\right) + \frac{3-\nTS(v)} 3\cdot [\ddface]$
\\

\bottomrule
\end{tabular}
\caption{Mnemonics for TS-vertices.}
\label{table:TS}
\end{table}

\subsection{Potentially-special vertices}

A vertex is \emph{potentially-special} if and only if it is not a $\dface$-vertex, a $\ddface$-vertex, a big vertex, a regular vertex, a semi-regular vertex, nor a TS-vertex. In the next paragraphs we divide the potentially-special vertices according to their face vectors. In every group, we specify the conditions under which a potentially-special vertex $v$ is actually special, or not. In both cases, we describe the related pairing, thus competing the definition of our $\pair$. 
See also \cref{fig:special}.

\begin{figure}[ht]\centering
\begin{tabular}{c}
\toprule
\input{rule_33a.tikz}
\ 
\input{rule_34a.tikz}
\\
\midrule
\input{rule_3ab.tikz}
\ 
\input{rule_334a.tikz}
\\
\midrule 
\input{rule_333a.tikz}
\\
\midrule
\input{rule_456.tikz}
\ 
\input{rule_3445.tikz}
\ 
\input{rule_33335.tikz}
\\
\bottomrule
\end{tabular}
\caption{Illustration of the special rules in the definition of $\protect\pair$.}
\label{fig:special}
\end{figure}

\speciale{(3,3,a)}\label{rule:33a}

Let $v\in\Verts$ with $\vtype(v)=(3,3,a)$, where $a\in\{11,12\}$. 
Let $\vfaces(v)=(\tau_1,\tau_2,\face)$ and let $\face_i=\op{opp}(v,\tau_i)$ for $i\in\{1,2\}$.
\begin{description}
\item[Case $\abs{\face_j}=11$ for some $j\in\{1,2\}$] 
Then $v$ is special to $\face_j$ with $\pair_2(v,\face_j)=\frac 1 2$, if $a=11$, or $\pair_2(v,\face_j)= 1$, if $a=12$.
Notice that in case $\abs{\face_1}=\abs{\face_2}=11$ we must have $\face_1=\face_2$ since otherwise there would be $w\in\Verts$ with $(3,3,11,11)\subseteq\vtype(w)$, which is not admissible in a PCC graph.
\end{description}
In case $\abs{\face_1}\neq 11$ and $\abs{\face_2}\neq 11$,  $v$ is not special, and we set  $\pair_2(v,\ddface)=\frac 1 2$, if $a=11$, or $\pair_2(v,\ddface)= 1$, if $a=12$.
Regardless of $v$ being special or not, we set $\pair_1(v,\face)=\frac 1 2$ if $a=11$.
In \cref{table:33a} we provide mnemonics for the above pairing rules.
\begin{table}[H]\centering
\begin{tabular}{ccccc}
\toprule
$\vtype(v)$
	\ \ \ &\ \ \ 
where 
	\ \ \ &\ \ \ 
$\pair_1(v,\cdot)$
	\ \ \ &\ \ \ 
$\pair_2(v,\cdot)$
	\ \ \ &\ \ \ 
	\\

\midrule

$(3,3,11)$
	\ \ \ &\ \ \ 

\ \ \ &\ \ \ 
$\frac 1 2 \cdot [11]$
\ \ \ &\ \ \ 
$\frac 1 2 \cdot [11/\ddface]$
\ \ \ &\ \ \ 
\\

\midrule

$(3,3,12)$
	\ \ \ &\ \ \ 

\ \ \ &\ \ \ 
$-$
\ \ \ &\ \ \ 
$1 \cdot [11/\ddface]$
\ \ \ &\ \ \ 
\\

\bottomrule
\end{tabular}
\caption{Mnemonics for the case (3,3,a).}
\label{table:33a}
\end{table}

\speciale{(3,4,a)}\label{rule:34a}

Let $v\in\Verts$ with $\vtype(v)=(3,4,a)$, where $5\leq a\leq 41$. 
Let $\vfaces(v)=(\tau,\kappa,\face)$ and $\fverts(\kappa)=\langle v,v_1,v_2,v_3\rangle$ with $\etype(v v_1)=(3,4)$.
Finally, let $\face_1=\op{opp}(v,\tau)$, $\efaces(v_1v_2)=\{\kappa,\face_2\}$ and $\efaces(v_2v_3)=\{\kappa,\face_3\}$.
\begin{description}
\item[Case $\abs{\face_j}=11$ for some $j\in\{1,2\}$, and $a\neq 6$] 
Then $v$ is special to $\face_j$ with $\pair_2(v,\face_j)=1$, if $a\in\{8,9,10,12\}$, or $\pair_2(v,\face_j)= \frac 1 2$ otherwise.
Notice that in case $\abs{\face_1}=\abs{\face_2}=11$ we must have $\face_1=\face_2$ since otherwise there would be $w\in\Verts$ with $(3,4,11,11)\subseteq\vtype(w)$, which is not admissible in a PCC graph.
\item [Case $\abs{\face_j}=11$ for some $j\in\{1,2,3\}$, and $a= 6$]
Consider the set 
\[\mcl A_v:=\{\face'\in\Faces:\ \abs{\face'}=11\text { and }\face'=\face_j\text{ for some } j\in\{1,2,3\}\},\]
and let $a_v:=\#\mcl A_v$.
Then $v$ is special to $\face'$ for every $\face'\in\mcl A_v$, with $\pair_2(v,\pair')=\frac 1 2$.
It's clear that $a_v\leq 3$ and moreover $a_v\neq 3$ because otherwise $(4,11,11)\subseteq \vtype(v_2)$, which is not admissible. 
\end{description}

In case $a\neq 6$, $\abs{\face_1}\neq 11$ and $\abs{\face_2}\neq 11$, $v$ is not special, and we set $\pair_2(v,\ddface)=1$, if $a\in\{8,9,10,12\}$, or $\pair_2(v,\ddface)= \frac 1 2$ otherwise.
In case $a= 6$ and $\abs{\face_i}\neq 11$ for all $i\in\{1,2,3\}$, $v$ is not special, and we set $\pair_2(v,\ddface)=1-\frac {a_v}2$.
Regardless of $v$ being special or not, we set $\pair_1(v,\face)=\frac 1 2$ if $a\not\in\{6,8,9,10,12\}$.
In \cref{table:34a} mnemonics for the above pairing rules.
\begin{table}[H]\centering
\begin{tabular}{cccc}
\toprule
$\vtype(v)$
	&
where 
	&
$\pair_1(v,\cdot)$
	&
$\pair_2(v,\cdot)$
\\

\midrule

$(3,4,a)$
	&
$\begin{cases}5\leq a\leq 41 \\ a\not\in\{6, 8,9,10,12\}\end{cases}\!\!$
&
$\frac 1 2 \cdot [a]$
&
$\frac 1 2 \cdot [11/\ddface]$
\\

\midrule

$(3,4,6)$
	&

&
$-$
&
$\!\!\! a_v\times \left(\frac 1 2 \cdot [11]\right)+\frac {2-a_v} 2 \cdot [\ddface]$
\\

\midrule

$(3,4,a)$
	&
$a\in\{8,9,10,12\}$
&
$-$
&
$1 \cdot [11/\ddface]$
\\

\bottomrule
\end{tabular}
\caption{Mnemonics for the case (3,4,a).}
\label{table:34a}
\end{table}

\speciale{(3,a,b)}\label{rule:3ab}

Let $v\in\Verts$ with $\vtype(v)=(3,a,b)$, where $6\leq a\leq 10$ and $7\leq b\leq 10$. 
Let $\vfaces(v)=(\tau,\face_1,\face_2)$ and let $\face=\op{opp}(v,\tau)$.
\begin{description}
\item[Case $(a,b)=(6,7)$ and $\abs{\face}\in\{40,41\}$] 
Then $v$ is special to $\face$ with $\pair_2(v,\face)=1$.
\item[Case $(a,b)\neq (6,7)$, $14\leq\abs{\face}\leq 41$ and $\abs{\face}\neq 19$]
Then $v$ is special to $\face$, with $\pair_2(v,\face)=\frac 1 2$, if $(a,b)\in\{(7,8),(7,9)\}$, or $\pair_2(v,\face)= 1$ otherwise.
\end{description}
If the above cases don't apply, $v$ is not special, and we set $\pair_2(v,\ddface)=\frac 1 2$, if $(a,b)\in\{(7,8),(7,9)\}$, or $\pair_2(v,\ddface)= 1$. 
Regardless of $v$ being special or not, we set $\pair_1(v,\face_1)=\frac 1 2$ if $(a,b)\in\{(7,8),(7,9)\}$.
In \cref{table:3ab} we provide mnemonics for the above pairing rules.
\begin{table}[H]\centering
\begin{tabular}{cccc}
\toprule
$\vtype(v)$
	\ \ \ &\ \ \ 
where 
	\ \ \ &\ \ \ 
$\pair_1(v,\cdot)$
	\ \ \ &\ \ \ 
$\pair_2(v,\cdot)$
	\\

\midrule

$(3,6,7)$
	\ \ \ &\ \ \ 

\ \ \ &\ \ \ 
$-$
\ \ \ &\ \ \ 
$1 \cdot [40/41/\ddface]$
\\

\midrule

$(3,7,7)$
	\ \ \ &\ \ \ 

\ \ \ &\ \ \ 
$-$
\ \ \ &\ \ \ 
$1 \cdot [N/40/41/\ddface]$
\\

\midrule

$(3,7,a)$
	\ \ \ &\ \ \ 
$a\in \{8,9\}$
\ \ \ &\ \ \ 
$\frac 1 2 \cdot [7]$
\ \ \ &\ \ \ 
$\frac 1 2 \cdot [N/\ddface]$
\\

\midrule
$(3,7,10)$
	\ \ \ &\ \ \ 
\ \ \ &\ \ \ 
$-$
\ \ \ &\ \ \ 
$1 \cdot [N/\ddface]$
\\

\midrule

$(3,a,b)$
	\ \ \ &\ \ \ 
$8\leq a\leq b\leq 10$
\ \ \ &\ \ \ 
$-$
\ \ \ &\ \ \ 
$1 \cdot [N/\ddface]$
\\

\bottomrule
\end{tabular}
\caption{Mnemonics for the case (3,a,b).}
\label{table:3ab}
\end{table}

The letter $N$ stands for $14\leq N\leq 39$, but $N\neq 19$, as in \cref{sec:N}.
Notice that necessarily if $b=8$, then $\abs{\face}\leq 23$, if $b=9$, then $\abs{\face}\leq 17$, and if $b=10$, then $\abs{\face}\leq 14$, by \cref{table:admissible}.

\speciale{(4,5,6)}\label{rule:456}

Let $v\in\Verts$ with $\vtype(v)=(4,5,6)$. 
Let $\vfaces(v)=(\kappa,\face,\face')$, so that $\abs{\kappa}=4$ and $\abs{\face}=5$.
Let $\fverts(\kappa) = \langle v, v_1, v_2, v_3 \rangle$ with $\etype(vv_1)=(4,6)$ and finally
let $\efaces(v_1v_2) = \{\kappa,\face_1\}$. 

\begin{description}
\item[Case $\abs{\face_1}=11$]
Then $v$ is special to $\face_1$ with $\pair_2(v,\face_1)=\frac 1 2$.
\end{description}

If the above condition doesn't hold, $v$ is not special and we set $\pair_2(v,\face_1)=\frac 1 2$.
Regardless of $v$ being special or not, we put $\pair_1(v,\face)=\frac 1 2$.
In \cref{table:456} we provide mnemonics for the above pairing rules.
\begin{table}[H]\centering
\begin{tabular}{cccc}
\toprule
$\vtype(v)$
	\ \ \ &\ \ \ 
where 
	\ \ \ &\ \ \ 
$\pair_1(v,\cdot)$
	\ \ \ &\ \ \ 
$\pair_2(v,\cdot)$
	\\

\midrule

$(4,5,6)$
	\ \ \ &\ \ \ 

\ \ \ &\ \ \ 
$\frac 1 2\cdot [5]$
\ \ \ &\ \ \ 
$\frac 1 2\cdot [11/\ddface]$
\\

\bottomrule
\end{tabular}
\caption{Mnemonics for the case (4,5,6).}
\label{table:456}
\end{table}

\speciale{(3,3,3,a)}\label{rule:333a}

Let $v\in\Verts$ with $\vtype(v)=(3,3,3,a)$, where $5\leq a\leq 12$. 
Let $\vfaces(v)=\langle\tau_1,\tau_2,\tau_3,\face\rangle$ with $\abs{\face}=a$, and let $\face_i=\op{opp}(v,\tau_i)$ for $i\in\{1,2,3\}$. If $a=11$ we let $r=\frac 1 3$, while if $a=12$ we let $r=\frac 1 2$.
\begin{description}
\item[Case $a=5$ and $\abs{\face_2}\in\{11,40,41\}$] 
Then $v$ is special to $\face_2$ with $\pair_2(v,\face_2)=1$.
\item[Case $a\in\{11,12\}$ and $\abs{\face_j}=11$ for some $j\in\{1,2,3\}$]
Then $v$ is special to the elements of $\mcl A = \{\face_j:\ 1\leq j\leq 3, \abs{\face_j}=11\}$. 
In this case we consider $\mcl A$ as a subset of $\Faces$, and not as a multiset. We notice that $\#\mcl A\neq 3$ because otherwise there would be some $w\in\Verts$ with $(3,3,11,11)\subseteq\vtype(w)$, which is not admissible in a PCC graph. 
In case $\#\mcl A=2$, then we put $\pair_2(v,\alpha)=r$ for all $\alpha\in\mcl A$.
In case $\#\mcl A=1$, we put $\pair_2(v,\alpha)=\pair_2(v,\ddface)=r$, where $\mcl A=\{\alpha\}$.
\item[Case $6\leq a\leq 10$ and $\abs{\face_2}=11$] 
Then $v$ is special to $\face_2$ with $\pair_2(v,\face_2)=1$.
\end{description}
If the above cases don't apply, then $v$ is not special, and we set $\pair_2(v,\ddface)=\frac 2 3$ if $a=11$, or $\pair_2(v,\ddface)=1$ otherwise.
Regardless of $v$ being special or not, we set $\pair_1(v,\face)=\frac 1 3$ if $a=11$.
In \cref{table:333a} we provide mnemonics for the above pairing rules.
\begin{table}[H]\centering
\begin{tabular}{cccc}
\toprule
$\vtype(v)$
	\ \ \ &\ \ \ 
where 
	\ \ \ &\ \ \ 
$\pair_1(v,\cdot)$
	\ \ \ &\ \ \ 
$\pair_2(v,\cdot)$
	\\

\midrule

$(3,3,3,5)$
	\ \ \ &\ \ \ 

\ \ \ &\ \ \ 
$-$
\ \ \ &\ \ \ 
$1 \cdot [11/40/41/\ddface]$
\\

\midrule

$(3,3,3,a)$
	\ \ \ &\ \ \ 
$6\leq a \leq 10$
\ \ \ &\ \ \ 
$-$
\ \ \ &\ \ \ 
$1 \cdot [11/\ddface]$
\\

\midrule

$(3,3,3,11)$
	\ \ \ &\ \ \ 

\ \ \ &\ \ \ 
$\frac 1 3 [11]$
\ \ \ &\ \ \ 
$\frac 1 3 \cdot [11/\ddface]+\frac 1 3 \cdot [11/\ddface]$
\\

\midrule 

$(3,3,3,12)$
	\ \ \ &\ \ \ 

\ \ \ &\ \ \ 
$-$
\ \ \ &\ \ \ 
$\frac 1 2 \cdot [11/\ddface]+\frac 1 2 \cdot [11/\ddface]$
\\

\bottomrule
\end{tabular}
\caption{Mnemonics for the case (3,3,3,a).}
\label{table:333a}
\end{table}

\speciale{(3,3,4,a)}\label{rule:334a}

This set of rules is similar to \cref{rule:34a}.
Let $v\in\Verts$ with $\vtype(v)=\langle 3,3,4,a\rangle$, where $a\in\{5,6,7\}$.
Let $\vfaces(v)=\langle\tau_1,\tau_2,\kappa,\face\rangle$ with $\abs{\kappa}=4$ and $\abs{\face}=a$,
and let $\fverts(\kappa)=\langle v,v_1,v_2,v_3\rangle$ with $\etype(v v_1)=(3,4)$.
Then, let $\face_1=\op{opp}(v,\tau_2)$,   $\efaces(v_1v_2)=\{\kappa,\face_2\}$ and $\efaces(v_2v_3)=\{\kappa,\face_3\}$.
Moreover, if $a=5$ let $r=\frac 1 2$, and if $a=7$ let $r=\frac 3 4$.
\begin{description}
\item[Case $a\in\{5,7\}$ and $\abs{\face_j}=11$ for some $j\in\{1,2\}$]
Then $v$ is special to $\face_j$, with $\pair_2(v,\face_j)=r$.
Notice that if $\abs{\face_1}=\abs{\face_2}=11$, then actually $\face_1=\face_2$, since otherwise $(3,4,11,11)\subseteq\vtype(v_1)$, which is not admissible.
\item[Case $a=6$ and $\abs{\face_j}=11$ for some $j\in\{1,2,3\}$]
Consider the set 
\[\mcl A_v:=\{\face'\in\Faces:\ \abs{\face'}=11\text { and }\face'=\face_j\text{ for some } j\in\{1,2,3\}\},\]
and let $a_v:=\#\mcl A_v$.
Then $v$ is special to $\face'$ for every $\face'\in\mcl A_v$, with $\pair_2(v,\pair')=\frac 1 2$.
It's clear that $a_v\leq 3$ and moreover $a_v\neq 3$ because otherwise $(4,11,11)\subseteq \vtype(v_2)$, which is not admissible. 
\end{description}
If $\vtype(v)=\langle 3,4,3,a\rangle$ or if $\vtype(v)=\langle 3,3,4,a\rangle$ but the above cases don't apply, $v$ is not special. 
If $v$ is not special and $a\in\{5,7\}$ we set $\pair_2(v,\ddface)=r$. 
Regardless of $v$ being special or not, we set $\pair_2(v,\ddface)=1-\frac{a_v}2$ if $a=6$.
Regardless of $v$ being special or not, we set $\pair_1(v,\face)=1-r$ if $a\in\{5,7\}$.
In \cref{table:334a} we provide mnemonics for the above pairing rules.
\begin{table}[H]\centering
\begin{tabular}{cccc}
\toprule
$\vtype(v)$
	\ \ \ &\ \ \ 
where 
	\ \ \ &\ \ \ 
$\pair_1(v,\cdot)$
	\ \ \ &\ \ \ 
$\pair_2(v,\cdot)$
	\\

\midrule

$(3,3,4,5)$
	\ \ \ &\ \ \ 

\ \ \ &\ \ \ 
$\frac 1 2 \cdot [5]$
\ \ \ &\ \ \ 
$\frac 1 2 \cdot [11/\ddface]$
\\

\midrule

$(3,3,4,6)$
	\ \ \ &\ \ \ 

\ \ \ &\ \ \ 
$-$
\ \ \ &\ \ \ 
$a_v\times \left(\frac 1 2 \cdot [11]\right)+\frac {2-a_v} 2 \cdot [\ddface]$
\\

\midrule

$(3,3,4,7)$
	\ \ \ &\ \ \ 

\ \ \ &\ \ \ 
$\frac 1 4 \cdot[7]$
\ \ \ &\ \ \ 
$\frac 3 4 \cdot [11/\ddface]$
\\

\bottomrule
\end{tabular}
\caption{Mnemonics for the case (3,3,4,a).}
\label{table:334a}
\end{table}

\speciale{(3,4,4,5)}\label{rule:3445}

Let $v\in\Verts$ with $\vtype(v)=\langle 4,3,4,5\rangle$. 
Let $w_1,w_2\in\Verts$ with $w_1\neq w_2$ such that $\etype(vw_1)=\etype(vw_2)=(4,5)$, let $=\vfaces(v)=(\tau,\kappa_1,\kappa_2,\face)$ and let $\face_1= \op{opp}(v,\tau)$.
Let also $\mcl A:=\{(4,5,a):\ 14\leq a\leq 19\}$.
\begin{description}
\item[Case $\abs{\face_1}=11$ and $\vtype(w_1),\vtype(w_2)\not\in\mcl A$] 
Then $v$ is special to $\face_1$ with $\pair_2(v,\face_1)=1$.
\end{description}
If otherwise $\vtype(v)=\langle 3,4,4,5\rangle$, or if $\vtype(v)=\langle 4,3,4,5\rangle$ but the above condition doesn't hold, $v$ is not special and we set $\pair_1(v,\face)=1$.
In \cref{table:3445} we provide mnemonics for the above pairing rules.
\begin{table}[H]\centering
\begin{tabular}{cccc}
\toprule
$\vtype(v)$
	\ \ \ &\ \ \ 
where 
	\ \ \ &\ \ \ 
$\pair_1(v,\cdot)$
	\ \ \ &\ \ \ 
$\pair_2(v,\cdot)$
	\\

\midrule

$(3,4,4,5)$
	\ \ \ &\ \ \ 
$v$ is special
\ \ \ &\ \ \ 
$-$
\ \ \ &\ \ \ 
$1 \cdot [11]$
\\

\midrule

$(3,4,4,5)$
	\ \ \ &\ \ \ 
$v$ is not special
\ \ \ &\ \ \ 
$1\cdot [5]$
\ \ \ &\ \ \ 
$-$
\\

\bottomrule
\end{tabular}
\caption{Mnemonics for the case (3,4,4,5).}
\label{table:3445}
\end{table}

\speciale{(3,3,3,3,5)}\label{rule:33335}

Let $v\in\Verts$ with $\vtype(v)=(3,3,3,3,5)$. 
Let $\vfaces(v)=\langle \tau_1,\tau_2,\tau_3,\tau_4,\face\rangle$ with $\abs{\face}=5$, and let $\mcl A:=\{(3,4,5),(3,3,4,5),(3,4,4,5)\}$.
For $i\in\{1,4\}$ let $w_i\in\Verts$ such that $\efaces(vw_i)=\{\tau_i,\face\}$, and for $j\in\{2,3\}$ let $\face_j= \op{opp}(v,\tau_j)$.
\begin{description}
\item[Case $\vtype(w_1)\in\mcl A$ and $\abs{\face_2}=11$] 
Then $v$ is special to $\face_2$ with $\pair_2(v,\face_2)=1$.
\item[Case $\vtype(w_4)\in\mcl A$ and $\abs{\face_3}=11$] 
Then $v$ is special to $\face_3$ with $\pair_2(v,\face_3)=1$.
We notice that if $\abs{\face_2}=\abs{\face_3}=11$ we necessarily have $\face_2=\face_3$.
\item[Case $\abs{\face_j}\in\{40,41\}$ for some $j\in\{2,3\}$]
Then $v$ is special to $\face_j$ with $\pair_2(v,\face_j)=1$.
\end{description}
If the above conditions don't hold, $v$ is not special and we set $\pair_2(v,\ddface)=1$.
Notice that if both $\abs{\face_2},\abs{\face_3}\in\{11,40,41\}$, we must have $\face_2=\face_3$.
In \cref{table:33335} we provide mnemonics for the above pairing rules.
\begin{table}[H]\centering
\begin{tabular}{cccc}
\toprule
$\vtype(v)$
	\ \ \ &\ \ \ 
where 
	\ \ \ &\ \ \ 
$\pair_1(v,\cdot)$
	\ \ \ &\ \ \ 
$\pair_2(v,\cdot)$
	\\

\midrule

$(3,3,3,3,5)$
	\ \ \ &\ \ \ 

\ \ \ &\ \ \ 
$-$
\ \ \ &\ \ \ 
$1\cdot [11/40/41/\ddface]$
\\

\bottomrule
\end{tabular}
\caption{Mnemonics for the case (3,3,3,3,5).}
\label{table:33335}
\end{table}


\section{Analysis of faces with 5 edges}\label{sec:5}

Let $\face\in \FFaces$ with $\abs{\face}=5$ and $\pair(\face)\neq 0$.
With our construction of the pairing, we have $\pair(v,\face)\neq 0$ if and only if $v\in\fverts(\face)$,  $v$ is not a special vertex of type $(3,4,4,5)$, and either $4\in \vtype(v)$ or $\vtype(v)=(3,5,a)$ with $14\leq a\leq 19$. We list all the possibilities in \cref{table:5}, and we define the constants $c_1=-0.00525$, 
 $c_2=0.038$, 
$c_3=0.023$, 
$c_4=0.015$. 

\begin{table}[H]\centering
\begin{tabular}{ccccl}
\toprule
$\vtype(v)$
	\ \ \ &\ \ \ 
\text{where}
	\ &\
$\pair(v,\face)$
	\ \ \ & \quad
 $c_v\pair(v,\face)$ &  \\

\midrule

$(3,4,5)$
	\ \ \ &\ \ \ 

	 \ & \ 
$\frac{1}{2}$
	\ \ \ &\ \ \ 
$\great 0.136$ &\\

\midrule

$(3,5,a)$
	\ \ \ &\ \ \ 
$14\leq a\leq 19$
	 \ & \ 
$\frac{1}{2}$
	\ \ \ &\ \ \ 
$\great 0.038$ &$= c_2$\\

\midrule

$\{4,5,a\}$
	\ \ \ &\ \ \ 
$8\leq a\leq 13, a\neq 11$
	 \ & \ 
$1$
	\ \ \ &\ \ \ 
$\great 0.017$ &\\

\midrule

$(4,5,5)$
	\ \ \ &\ \ \ 

	 \ & \ 
$\frac{1}{2}$ or $1$
	\ \ \ &\ \ \ 
$\great 0.070$ &\\

\midrule

$(4,5,6)$
	\ \ \ &\ \ \ 

	 \ & \ 
$\frac{1}{2}$
	\ \ \ &\ \ \ 
$\great 0.053$ &\\

\midrule

$(4,5,7)$
	\ \ \ &\ \ \ 

	 \ & \ 
$\frac{1}{2}$
	\ \ \ &\ \ \ 
$\great 0.041$ &\\

\midrule

$(4,5,11)$
	\ \ \ &\ \ \ 

	 \ & \ 
$\frac{1}{2}$
	\ \ \ &\ \ \ 
$\great 0.015$ &$= c_4$\\

\midrule

$(4,5,a)$
	\ \ \ &\ 
$14\leq a\leq 18$
	 \ & \ 
$1$
	\ \ \ &\ \ \
$\great -0.00402$ &\\

\midrule
$(4,5,19)$
	\ \ \ &\ \ \ 

	 \ & \ 
$\frac{3}{4}$
	\ \ \ &\ \ \ 
$\great -0.00521$ &$= c_1$\\

\midrule 

$(3,3,4,5)$
	\ \ \ &\ \ \ 
	
	 \ & \ 
$\frac{1}{2}$
	\ \ \ &\ \ \ 
$\great 0.053$ &\\

\midrule

$(3,4,4,5)$
	\ \ \ &\ \ \ 
	\text{$v$ not special}
	 \ & \ 
$1$
	\ \ \ &\ \ \ 
$\great 0.023$ &$= c_3$\\

\bottomrule
\end{tabular}
\caption{Vertices contributing nontrivially when $\protect\abs\face=5.$}
\label{table:5}
\end{table}

\vspace{3pt}

Let $A=\#\mcl{A}$, $B=\#\mcl{B}$ and $L=\#\mcl{L}$, where
\[
\begin{aligned}
\mcl{A}&:=\{v\in\fverts(\face):\ \vtype(v)=(4,5,a), \ 14\leq a\leq 19\},\\  
\mcl{B}&:=\{v\in\fverts(\face):\ \vtype(v)=(3,5,a), \ 14\leq a\leq 19\},\\
\mcl{L}&:=\{e\in\fedges(\face):\ \etype(e)=(5,a),\ 14\leq a \leq 19\}.
\end{aligned}
\]
We notice immediately that $c_-(\face)\great A c_1$; moreover, $2L=A+B\leq 4$, since it is both even and less than 5.
We now prove that $c(\face)\great 0$ considering the following 4 cases.

\begin{figure}[ht]\centering 
\input{5_A2.tikz}
\quad\quad
\input{5_A4.tikz}
\caption{Illustrations for $\protect\abs \face = 5$.}
\label{fig:5}
\end{figure}

\begin{description}
\item[Case $A=0$]
Here $c(\face)=c_+(\face)$ and so, by inspection of the table, 
$c(\face)\great c_4=0.015$.
\item[Case $B\geq 1$]
In this case $\exists v\in\mcl{B}$, so $c_+(\face)\geq c_v\pair(v,\face)\great c_2$. Moreover, $A\leq 3$, hence $c(\face)\great c_2+3c_1\great 0.022$.
\item[Case $B=0$ and $A=2$]
Let $\fverts(\face)=\langle v_1,v_2,v_3,v_4,v_5\rangle$ and suppose that $\mcl{A}=\{v_2,v_3\}$. Then $4\in\vtype(v_1)$ and $4\in\vtype(v_4)$. Moreover, according to rule \cref{rule:3445}, neither $v_1$ nor $v_4$ can be a special vertex of type $(3,4,4,5)$, since they are both consecutive in $\face$ to a vertex in $\mcl{A}$. Therefore, according to the table, both $v_1$ and $v_4$ contribute at least $c_4$ to $c_+(\face)$. Hence $c(\face)\great  2c_4+2c_1 \great 0.019$.
\item[Case $A=4$] 
Let $v$ be the only element of $\fverts(\face)$ which is not in $\mcl{A}$.
By rule \cref{rule:3445}, $v$ is not a special vertex of type $(3,4,4,5)$, since it is consecutive in $\face$ to an element of $\mcl{A}$.
Moreover, the only possibilities for $\vtype(v)$ are $(4,4,5)$ and $(3,4,4,5)$. 
In either case, $c_v\pair(v,\face)\great c_3$, and so $c(\face)\great 4c_1+c_3= 0.002$.
\end{description}

In any case we obtain $c(\face)\great 0.002$.


\section{Analysis of faces with 7 edges}\label{sec:7}

Let $\face\in \FFaces$ with $\abs{\face}=7$ and $\pair(\face)\neq 0$.
With our construction of the pairing, we have $\pair(v,\face)\neq 0$ if and only if $v\in\fverts(\face)$,  and either $4\in \vtype(v)$, $8\in \vtype(v)$, or $9\in \vtype(v)$. We list all the possibilities in \cref{table:7}, and we define the constants $c_1:=-0.0057$, 
 $c_2=0.038$, 
$c_3=0.012$, 
$c_4=0.133$. 

\begin{table}[H]\centering 
\begin{tabular}{ccccl}
\toprule
$\vtype(v)$
	\ \ \ &\ \ \ 
\text{where}
	\ &\
$\pair(v,\face)$
	\ \ \ & \ 
 $c_v\pair(v,\face)$& \\

\midrule

$(3,4,7)$
	\ \ \ &\ \ \ 

	 \ & \ 
$\frac{1}{2}$
	\ \ \ &\ \ \ 
$\great 0.108$& \\

\midrule

$(3,7,a)$
	\ \ \ &\ \ \ 
$a=8,9$
	 \ & \ 
$\frac{1}{2}$
	\ \ \ &\ \ \ 
$\great 0.038$ & $=c_2$\\

\midrule

$(4,4,7)$
	\ \ \ &\ \ \ 

	 \ & \ 
$1$
	\ \ \ &\ \ \ 
$\great 0.133$ &$=c_4$\\

\midrule

$(4,5,7)$
	\ \ \ &\ \ \ 

	 \ & \ 
$\frac{1}{2}$
	\ \ \ &\ 
$\great 0.041$ &\\

\midrule

$(4,6,7)$
	\ \ \ &\ 

	 \ & \ 
$1$
	\ \ \ &\ \ \
$\great 0.049$ &\\

\midrule

$(4,7,7)$
	\ \ \ &\ \ \ 
	
	 \ & \ 
$\frac{1}{2}$ or 1
	\ \ \ &\ \ \ 
$\great 0.013$ &\\

\midrule

$(4,7,a)$
	\ \ \ &\ \ \ 
	$a=8,9$
	 \ & \ 
$1$
	\ \ \ &\ \ \ 
$\great -0.0057$ &$=c_1$\\

\midrule

$(3,3,4,7)$
	\ \ \ &\ \ \ 
	
	 \ & \ 
$\frac{1}{4}$
	\ \ \ &\ \ \ 
$\great 0.012$ &$=c_3$\\

\bottomrule
\end{tabular}
\caption{Vertices contributing nontrivially when $\protect\abs\face=7.$}
\label{table:7}
\end{table}

Let $A=\#\mcl{A}$, $B=\#\mcl{B}$ and $L=\#\mcl{L}$, where
\[
\begin{aligned}
\mcl{A}&:=\{v\in\fverts(\face):\ \vtype(v)=(4,7,a), \ 8\leq a\leq 9\},\\  
\mcl{B}&:=\{v\in\fverts(\face):\ \vtype(v)=(3,7,a), \ 8\leq a\leq 9\},\\
\mcl{L}&:=\{e\in\fedges(\face):\ \etype(e)=(7,a),\ 8\leq a \leq 9\}.
\end{aligned}
\]
We notice immediately that $c_-(\face)\great A c_1$; moreover, $2L=A+B\leq 6$, since it is both even and less than 7.
We now prove that $c(\face)\great 0$ considering the following 4 cases.

\begin{figure}[ht]\centering 
\input{7_A2.tikz}
\quad
\input{7_A4.tikz}
\quad
\input{7_A6.tikz}
\caption{Illustrations for $\protect\abs\face=7.$}
\label{fig:7}
\end{figure}
\begin{description}
\item[Case $A=0$]
Here $c(\face)=c_+(\face)$ and so, by inspection of the table, 
$c(\face)\great c_3=0.012$.
\item[Case $B\geq 1$]
In this case  $\exists v\in\mcl{B}$ and $c_+(\face)\geq c_v\pair(v,\face)\great c_2$. Moreover $A\leq 5$, hence $c(\face)\great c_2+5c_1= 0.0095$.
\item[Case $B=0$ and $A\in\{2,4\}$]
%
Arguing as in \emph{Case $B=0$ and $A=2$} of \cref{sec:5} we see that there are two elements $v,w\in\fverts(\face)$ with $4\in\vtype(v)$, $4\in\vtype(w)$ and $v,v\not\in\mcl{A}$. Then, according to the table, both $v$ and $w$ contribute at least $c_3$ to $c_+(\face)$. Hence $c(\face)\great  2c_3+2c_1 \great 0.012$.
%
%
\item[Case $A=6$]
Let $v$ be the only element of $\fverts(\face)$ which is not in $\mcl{A}$.
Then $\vtype(v)$ can only be $(4,4,7)$, and so $c(\face)\great 6c_1+c_4\great 0.098$.
\end{description}
In any case $c(\face)\great 0.0095$.


\section{Analysis of faces with 11 edges}\label{sec:11}

The faces with 11 edges in a PCC graph exhibit rich combinatorial complexity around them, and many of the vertices in their boundaries may have very small curvature. 
Therefore we are required to perform a more careful analysis than in other sections. In particular, we exploit much more heavily the machinery of special vertices. See also \cref{def:blue} for the notion of blue edges, $\alpha$-vertices and $\beta$-vertices.

Let $\face\in \FFaces$ with $\abs{\face}=11$ and $\pair(\face)\neq 0$.
With our construction of the pairing, we have $\pair(v,\face)\neq 0$ if and only if $v\in\fverts(\face)$ or $v$ is a vertex special to $\face$. For $v\in\fverts(\face)$, we list all the possible face vectors, pairings with $\face$, and contributions to $c(\face)$  in \cref{table:11:1}. 
A vertex can be special to $\face$ as a consequence of all rules except \cref{rule:3ab}.
We list all the possibilities for $v\in\Verts$ special to $\face$  in \cref{table:11:2}.
We also define the constants 
$c_0=0.023$,
$c_1^{\alpha}=-0.00121$, 
$c_1^{\beta}=-0.00361$, 
$c_2=-0.002$, 
$c_3=0.00279$, 
$c_4=0.027$, 
$c_5=0.027$, 
$c_6=0.027$.

\begin{table}[H]\centering 
\begin{tabular}{ccccl}
\toprule
$\vtype(v)$
	\ \ \ &\ \ \ 
\text{where}
	\ &\
$\pair_1(v,\face)$
	\ \ \ & \quad
 $c_v\pair_1(v,\face)$ &  \\

\midrule

$(3,a,11)$
	\ \ \ &\ \ \ 
$a\in\{3,4,5\}$
	 \ & \ 
$\frac{1}{2}$
	\ \ \ &\ \ \ 
$\great 0.057$& \\

\midrule

$(3,6,11)$
	\ \ \ &\ \ \ 

	 \ & \ 
$1$
	\ \ \ &\ \ \ 
$\great 0.081$& \\

\midrule

$(3,a,11)$
	\ \ \ &\ \ \ 
$a\in\{7,8,9,10\}$
	 \ & \ 
$1$
	\ \ \ &\ \ \ 
$\great 0.014$& $=c_6$\\

\midrule

$(3,11,11)$
	\ \ \ &\ \ \ 

	 \ & \ 
$\frac{1}{2}$ or 1
	\ \ \ &\ \ \ 
$\great 0.00279$&$=c_3$ \\

\midrule

$(3,11,12)$
	\ \ \ &\ \ \ 

	 \ & \ 
$1$
	\ \ \ &\ \ \ 
$\great -0.0020$&$=c_2$ \\

\midrule

$(3,11,13)$
	\ \ \ &\ \ \ 
$v$ is an $\alpha$-vertex
	 \ & \ 
$\frac{1}{7}$
	\ \ \ &\ \ \ 
$\great -0.00121$& $=c_1^{\alpha}$\\

\midrule

$(3,11,13)$
	\ \ \ &\ \ \ 
$v$ is a $\beta$-vertex
	 \ & \ 
$\frac{3}{7}$
	\ \ \ &\ \ \ 
$\great -0.00361$&$=c_1^{\beta}$ \\

\midrule

$(4,4,11)$
	\ \ \ &\ \ \ 
	
	 \ & \ 
$1$
	\ \ \ &\ \ \ 
$\great 0.081$& \\

\midrule

$(4,5,11)$
	\ \ \ &\ \ \ 

	 \ & \ 
$\frac{1}{2}$
	\ \ \ &\ \ \ 
$\great 0.015$&$=c_5$ \\

\midrule

$(4,6,11)$
	\ \ \ &\ \ \ 

	 \ & \ 
$1$
	\ \ \ &\ \ \ 
$\great -0.0020$&$=c_2$ \\

\midrule

$(3,3,3,11)$
	\ \ \ &\ \ \ 

	 \ & \ 
$\frac{1}{3}$ 
	\ \ \ &\ \ \ 
$\great 0.027$&$=c_4$ \\

\midrule

$(3,3,4,11)$
	\ \ \ &\ \ \ 

	 \ & \ 
$1$
	\ \ \ &\ \ \ 
$\great -0.0020$&$=c_2$ \\

\bottomrule
\end{tabular}
\caption{Vertices in $v\in\fverts(\face)$ contributing nontrivially, when $\protect\abs\face=11$.}
\label{table:11:1}
\end{table}


\begin{table}[H]\centering 
\begin{tabular}{ccccl}
\toprule
$\vtype(v)$
	 &
\text{Special rule}
	 &
$\pair_2(v,\face)$
	 & 
 $c_v\pair_2(v,\face)$ &  \\

\midrule

$\{\text{only 3,4}\}$
	 &
\cref{rule:TS}
	 &
$\frac{1}{3}$
	 &
$\great 0.024$& \\

\midrule

$(3,3,11)$
	 & 
\cref{rule:33a}
	 &  
$\frac{1}{2}$
	 &
$\great 0.124$& \\

\midrule

$(3,3,12)$
	&
\cref{rule:33a}
          & 
$1$
	&
$\great 0.240$& \\

\midrule

$(3,4,a)$
	 & 
\cref{rule:34a}, $\begin{cases}5\leq a\leq 41 \\ a\not\in\{8,9,10,12\}\end{cases}\!\!$
	 & 
$\frac{1}{2}$
	& 
$\great 0.049$& \\

\midrule

%

$(3,4,a)$
 &
\cref{rule:34a}, $a\in\{8,9,10,12\}$
	 &
$1$
	 & 
$\great 0.157$& \\

\midrule

$(4,5,6)$
	 &
\cref{rule:456}
	  & 
$\frac 1 2$
	 &
$\great 0.053$& \\

\midrule

$(3,3,3,a)$
	 & 
\cref{rule:333a}, $5\leq a\leq 10$
	 &  
$1$
	 &
$\great 0.090$& \\

\midrule

$(3,3,3,11)$
	& 
\cref{rule:333a}
	 &  
$\frac{1}{3}$ 
	&
$\great 0.027$& \\

\midrule

$(3,3,3,12)$
	 & 
\cref{rule:333a}
	 & 
$\frac{1}{2}$
	 &
$\great 0.036$& \\

\midrule

%

$(3,3,4,a)$
	& 
\cref{rule:334a}, $a\in\{5,6\}$
	  &  
$\frac 1 2$
	 &
$\great 0.036$& \\

\midrule

$(3,3,4,7)$
	  &
\cref{rule:334a}
	  &
$\frac{3}{4}$
	 &
$\great 0.037$& \\

\midrule

$(3,4,4,5)$
	 & 
\cref{rule:3445}
	 & 
$1$
	 &
$\great 0.023$& $\!=c_0$\\

\midrule

$(3,3,3,3,5)$
	&
\cref{rule:33335}
	 & 
$1$
	 &
$\great 0.023$& $\!=c_0$\\

\bottomrule
\end{tabular}
\caption{Special vertices contributing nontrivially when $\protect\abs\face=11$.}
\label{table:11:2}
\end{table}

Let $A=\#\mcl{A}$, $B=\#\mcl{B}$, $C=\#\mcl{C}$ and $D=\#\mcl{D}$, where
\[
\begin{aligned}
	\mcl{A}&:=\{v\in\fverts(\face):\ v \text{ is an $\alpha$-vertex}\},\\
	\mcl{B}&:=\{v\in\fverts(\face):\ v \text{ is a $\beta$-vertex}\},\\
	\mcl{C}&:=\{v\in\fverts(\face):\ \vtype(v)=(3,11,11)\},\\
	\mcl{D}&:=\{v\in\fverts(\face):\ \vtype(v)\in \{(3,11,12), (4,6,11), (3,3,4,11)\}\}.
\end{aligned}
\]

Let also
\[
\begin{aligned}
	\mcl{W}_1&:=\{v\in\fverts(\face):\ \vtype(v)\in \{(3,a,11):\ a\leq 6\}\cup\{(4,4,11), (3,3,3,11)\}\},\\
	\mcl{W}_2&:=\{v\in\fverts(\face):\ \vtype(v)=(4,5,11)\},\\
	\mcl{W}_3&:=\{v\in\fverts(\face):\ \vtype(v)=(3,a,11),\ 7\leq a\leq 10\},\\
	\mcl{D}_1&:=\{v\in\fverts(\face):\ \vtype(v)=\langle 3,4,3,11\rangle\},\\
	\mcl{D}_2&:=\{v\in\fverts(\face):\ \vtype(v)=\langle 3,3,4,11\rangle \text{ (or } \vtype(v)=\langle 4,3,3,11\rangle \text{)}\},\\
	\mcl{D}_6&:=\{v\in\fverts(\face):\ \vtype(v)=(4,6,11)\},\\
	\mcl{D}_{12}&:=\{v\in\fverts(\face):\ \vtype(v)=(3,11,12)\}.\\
\end{aligned}
\]

Finally, let $\mcl{L}:=\{e\in\fedges(\face):\ e \text{ is a blue edge}\}$ and, for $k\in\{3,11,13\}$, let
\[ \mcl M_k:=\{e\in\fedges(\face):\ \etype(e)=\{11,k\}\}\]

We start by giving a quick estimate of $c_-(\face)$. Clearly, $c_-(\face)\great B c_1^{\beta}+(A+D) \min\{c_1^{\alpha},c_2\}\geq B c_1^{\beta}+(11-B-C) c_2 $. We observe, directly from \cref{def:blue}, that the elements of $\mcl{B}$ are exactly the endpoints of the elements of $\mcl{L}$. This implies that $B=2(\#\mcl L)$, so $B$ is even.
Similarly, $A+B=2(\#\mcl M_{13})$ and $C=2(\#\mcl M_{11})$, so also $A$ and $C$ are even.
Moreover, if $v_1,v_2,v_3,v_4,v_5,v_6\in\fverts(\face)$ are consecutive vertices of $\face$ and $v_3,v_4\in\mcl{B}$, then $v_1,v_2,v_5,v_6\in\mcl{C}$. This implies that $B\leq C$, and so $B\leq \frac{11}{2}\less 6$. Moreover, if $B=2$ or $B=4$, we see that $C\geq B+2$. Then we have the following 3 cases.

\begin{description}
\item[Case $B=0$]
Then $c_-(\face)\great 11c_2 = -0.022$.
\item[Case $B=2$]
Then $C\geq 4$, so $c_-(\face)\great 2c_1^{\beta} +(11-6)c_2\great -0.018$.
\item[Case $B=4$]
Then $C\geq 6$, so $c_-(\face)\great 4c_1^{\beta} +(11-10)c_2= -0.01644$. 
\end{description}

In any case, we obtain $c_-(\face)\great -0.022$. In addition to this, we deduce the following lemma, that we will repeatedly use in the subsequent analysis.

\begin{lemma}\label{lemma:11:0}
Suppose that $\exists w\in\Verts$ which is special to $\face$. Then $c(\face)\great 0.001$.
\end{lemma}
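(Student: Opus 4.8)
The plan is to deduce the claim directly from the splitting $c(\face)=c_+(\face)+c_-(\face)$ and from the two estimates that are now at our disposal. The first ingredient is the bound $c_-(\face)\great -0.022$: I would emphasise that the case analysis over $B\in\{0,2,4\}$ carried out just above is \emph{unconditional}, so it applies here regardless of the presence of a special vertex.

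The second ingredient exploits the hypothesis. By the definition of a special vertex, $w$ being special to $\face$ means $\pair_2(w,\face)\neq 0$; hence $w$ is a TS-vertex or a potentially-special vertex, and the combination of $\vtype(w)$ with the special rule responsible for $w$ being special to $\face$ occurs as one of the rows of \cref{table:11:2}. I would then simply read off the corresponding column of that table: every row there satisfies $c_w\pair_2(w,\face)\great 0.023=c_0$, the minimum being attained only by the rows for $\vtype(w)=(3,4,4,5)$ via \cref{rule:3445} and for $\vtype(w)=(3,3,3,3,5)$ via \cref{rule:33335}. Since this quantity is positive and $\pair_2(w,\face)$ is nonnegative, it also follows that $c_w\great 0$.

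Now I would combine the two. Because $c_w\great 0$, the vertex $w$ is one of the indices of the sum that defines $c_+(\face)$, and since every term of that sum is nonnegative while $\pair(w,\face)=\pair_1(w,\face)+\pair_2(w,\face)\geq\pair_2(w,\face)$, this gives $c_+(\face)\geq c_w\pair(w,\face)\geq c_w\pair_2(w,\face)\great 0.023$. Adding the bound on $c_-(\face)$ yields $c(\face)=c_+(\face)+c_-(\face)\great 0.023-0.022=0.001$, as required.

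I do not foresee a real obstacle in this argument: it is pure bookkeeping. The only two points that deserve a line of care are that the estimate $c_-(\face)\great -0.022$ was proved \emph{without} assuming anything about special vertices, and that a special vertex $w$ automatically has $c_w\great 0$, so that its contribution lands in $c_+(\face)$ rather than in $c_-(\face)$.
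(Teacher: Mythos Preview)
Your argument is correct and follows essentially the same route as the paper: both use the unconditional bound $c_-(\face)\great -0.022$ established just before the lemma, and both read off from \cref{table:11:2} that any special vertex contributes more than $c_0=0.023$ to $c_+(\face)$. Your version is slightly more explicit in justifying why $c_w\great 0$ and why $\pair(w,\face)\geq\pair_2(w,\face)$, but the substance is identical.
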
 
\begin{proof}
According to the second table, in this case we get $c_+(\face)\geq c_w\pair(w,\face)\great c_0=0.023$.
\end{proof}

We now rule out the following 3 cases, which don't require the use of special vertices.

%
%
%

\begin{figure}[ht]\centering 
\input{11_W2.tikz}
\quad \quad
\input{11_W3.tikz}
\caption{Illustration of first cases for $\protect\abs\face=11$.}
\label{fig:11:1}
\end{figure}

\begin{description}
\item[Case $\exists\, v\in \mcl{W}_1$]
Then $c_+(\face)\geq c_v\pair(v,\face)\great c_4=0.027$, hence $c(\face)\great 0.005$.
\item[Case $\exists\, v\in \mcl{W}_2$]
Let $w\in\fverts(\face)$ with $vw\in\fedges(\face)$ and $\etype(vw)=(5,11)$. 
Since the possibility $\vtype(w)=(3,5,11)$ has already been ruled out, we must have $w\in\mcl{W}_2$ as well. 
Then $c_+(\face)\geq c_v\pair(v,\face)+c_w\pair(w,\face)\great 2c_5=0.030$, hence $c(\face)\great 0.008$.
\item[Case $\exists\, v\in \mcl{W}_3$]
Let $w\in\fverts(\face)$ with $vw\in\fedges(\face)$ and  $\etype(vw)=(a,11)$, for some $a\in\{7,8,9,10\}$. 
Then we must have $w\in\mcl{W}_3$ as well. 
Therefore $c_+(\face)\geq c_v\pair(v,\face)+c_w\pair(w,\face)\great 2c_6=0.028$, hence $c(\face)\great 0.006$.
\end{description}

If one of the above cases applies, we obtain $c(\face)\great 0.005$. Otherwise, we must have that every element of $\fverts(\face)$ belongs to either $\mcl{A}$, $\mcl{B}$, $\mcl{C}$ or $\mcl{D}$.
Let now $e\in\fedges(\face)$ and $\everts(e)=\{v_1,v_2\}$. 
We rule out the following 5 cases.
\begin{figure}[ht]\centering 
\input{11_D12D1.tikz}
\quad
\input{11_D12D2.tikz}
\quad
\input{11_D1D1.tikz}

\input{11_AD1.tikz}
\quad\quad 
\input{11_D2D2.tikz}
\caption{Illustration of more cases for $\protect\abs\face=11$.}
\label{fig:11:2}
\end{figure}
\begin{description}
\item[Case $v_1\in\mcl D_{12}$ and $v_2\in\mcl{D}_1$]
Then $\efaces(e)=\{\tau,\face\}$ with $\abs{\tau}=3$. Let $w\in\Verts$ such that $\everts(\tau)=\{v_1,v_2,w\}$. We notice that $(3,4,12)\subseteq \vtype(w)$, so $\vtype(w)=(3,4,12)$, and by \cref{rule:34a} we have that $w$ is special to $\face$. 
\item[Case $v_1\in\mcl D_{12}$ and $v_2\in\mcl{D}_2$]
Then $\efaces(e)=\{\tau,\face\}$ with $\abs{\tau}=3$. Let $w\in\Verts$ such that $\everts(\tau)=\{v_1,v_2,w\}$. We notice that $(3,3,12)\subseteq \vtype(w)$, so $\vtype(w)=(3,3,12)$ or $\vtype(w)=(3,3,3,12)$ and either by \cref{rule:33a} or \cref{rule:333a} we have that $w$ is special to $\face$. 
\item[Case $v_1,v_2\in\mcl{D}_1$]
Then $\efaces(e)=\{\tau,\face\}$ with $\abs{\tau}=3$. Let $w\in\Verts$ such that $\everts(\tau)=\{v_1,v_2,w\}$. We notice that $(3,4,4)\subseteq \vtype(w)$, so either $w$ is a TS-vertex or $\vtype(w)=\langle 4,3,4,5\rangle$. In the first case by \cref{rule:TS} we have that $w$ is special to $\face$. 
Otherwise, let $\tau_1,\tau_2,s_1,s_2\in\Faces$ with $\abs{\tau_1}=\abs{\tau_2}=3$ and $\abs{s_1}=\abs{s_2}=4$, such that $\vfaces(v_1)=\langle \tau, s_1, \tau_1, \face \rangle$ and $\vfaces(v_2)=\langle \tau_2, s_2, \tau, \face \rangle$. Moreover, let $w_1,w_2,w_3,w_4,w_5,w_6\in\Verts$ be such that $\fverts(\tau_1)=\langle w_1,v_1,w_2 \rangle$, $\fverts(s_1)=\langle w_2,v_1,w,w_3\rangle$, $\fverts(\tau_2)=\langle v_2,w_6,w_5 \rangle$ and $\fverts(s_2)=\langle w,v_2,w_5,w_4\rangle$. We claim that $\fverts(w_3)\neq(4,5,a)$ for  $14\leq a \leq 19$. Suppose the contrary: then we deduce that $\fverts(w_2)=(3,4,a)$ and so also $\fverts(w_1)=(3,11,a)$, which is not admissible. Analogously, we find that $\fverts(w_4)\neq(4,5,a)$ for  $14\leq a \leq 19$. Therefore, by \cref{rule:3445}, we have that $w$ is special to $\face$. 
\item[Case $v_1\in\mcl A$ and $v_2\in\mcl{D}_1$]
Then $\efaces(e)=\{\tau,\face\}$ with $\abs{\tau}=3$. Let $w\in\Verts$ such that $\everts(\tau)=\{v_1,v_2,w\}$. We notice that $(3,4,13)\subseteq \vtype(w)$, so $\vtype(w)=(3,4,13)$, and by \cref{rule:34a} we have that $w$ is special to $\face$. 
\item[Case $v_1,v_2\in\mcl{D}_2$ and $\etype(e)=(3,11)$]
Let $\efaces(e)=\{\tau,\face\}$ with $\abs{\tau}=3$ and let $w\in\Verts$ such that $\everts(\tau)=\{v_1,v_2,w\}$. Let also $\tau_1,\tau_2,s_1,s_2\in\Faces$ with $\abs{\tau_1}=\abs{\tau_2}=3$ and $\abs{s_1}=\abs{s_2}=4$, such that $\vfaces(v_1)=\langle \tau, \tau_1, s_1, \face \rangle$ and $\vfaces(v_2)=\langle s_2, \tau_2, \tau,\face \rangle$. Finally, let $w_1,w_2\in\Verts$ such that $\fverts(\tau_1)=\langle v_1,w,w_1\rangle$ and $\fverts(\tau_2)=\langle w,v_2,w_2\rangle$. We notice that $(3,3,3)\subseteq \vtype(w)$, so we consider the following 4 cases.
	\begin{description}
	\item[Case $w$ is a TS vertex]
	Then by \cref{rule:TS} $w$ is special to $\face$. 
	\item[Case $\vtype(w)=(3,3,3,a)$ with $5\leq a\leq 12$]
	Then by \cref{rule:333a} we have that $w$ is special to $\face$. 
	\item[Case $\vtype(w)=(3,3,3,a)$ with $a\geq 13$]
	Then we have $\vtype(w_1)=(3,4,a)$. Moreover, by \cref{thm:big} below, we cannot have $a\geq 42$. Hence, by \cref{rule:34a}, $w_1$ is special to $\face$. 
	\item[Case $\vtype(w)=(3,3,3,3,5)$]
	Then by \cref{rule:33335} $w$ is special to $\face$, since either $(3,4,5)\subseteq \vtype(w_1)$ or $(3,4,5)\subseteq \vtype(w_2)$. 
	\end{description}
\end{description}
In any case we obtain $c(\face)\great 0.001$ by \cref{lemma:11:0}.
We now rule out 6 more cases.
Let $v_1,v_2,v_3,v_4\in\fverts(\face)$ be consecutive vertices on $\face$. 

\begin{figure}[ht]\centering 
\input{11_D2D1D2.tikz}
\quad
\input{11_D1D2D6.tikz}

\input{11_D1D2D2A.tikz}
\quad
\input{11_CD2D6.tikz}

\input{11_CD2D2A.tikz}
\quad
\input{11_CD2D2D1.tikz}
\caption{Illustration of even more cases for $\protect\abs\face=11$.}
\label{fig:11:3}
\end{figure}

\begin{description}
\item[Case $v_1,v_3\in\mcl{D}_2$ and $v_2\in\mcl{D}_1$]
Let $\tau_1,\tau_2,s_1\in\Faces$ with $\abs{\tau_1}=\abs{\tau_2}=3$ and $\abs{s_1}=4$, such that $\vfaces(v_2)=\langle \tau_2, s_1,\tau_1, \face \rangle$ and let $w_1,w_2,w_3\in\Verts$ such that $\fverts(\tau_1)=\langle v_1,v_2,w_1\rangle$, $\fverts(\tau_2)=\langle v_2,v_3,w_3\rangle$ and $\fverts(s_1)=\langle w_1,v_2,w_3,w_2\rangle$. 
We see that it is impossible to have both $\vtype(w_1)=(3,3,4,a)$ and $\vtype(w_3)=(3,3,4,b)$ with $8\leq a,b\leq 11$. 
Indeed otherwise $\{4,a,b\}\subseteq\vtype(w_2)$, which is not admissible. 
Hence, for some $i\in\{1,3\}$ we have that $w_i$ is a TS-vertex, or that $\vtype(w_i)=(3,3,4,a)$ with $5\leq a\leq 7$.
Then, either by \cref{rule:TS} or \cref{rule:334a} $w_i$ is special to $\face$. 
\item[Case $v_1\in\mcl{D}_1$, $v_2\in\mcl{D}_2$ and $v_3\in\mcl D_6$]
Let $\tau_1,\tau_2,s_1\in\Faces$ with $\abs{\tau_1}=\abs{\tau_2}=3$ and $\abs{s_1}=4$, such that $\vfaces(v_2)=\langle s_1,\tau_2, \tau_1,\face \rangle$ and let $w_1,w_2,w_3\in\Verts$ such that $\fverts(\tau_1)=\langle v_1,v_2,w_1\rangle$, $\fverts(\tau_2)=\langle w_1,v_2,w_2\rangle$ and $\fverts(s_1)=\langle v_2,v_3,w_3,w_2\rangle$.
We rule out the following 3 cases.
		\begin{description}
		\item[Case $\etype(w_1w_2)=(3,a)$ with $a=3,4$]
		Then $w_1$ is necessarily a TS-vertex, so by \cref{rule:TS} it is special to $\face$. 
		\item[Case $\etype(w_1w_2)=(3,5)$]
		Then $\vtype(w_1)=(3,3,4,5)$, so by \cref{rule:334a} $w_1$ is special to $\face$. 
		\item[Case $\vtype(w_2)=(3,4,a)$ with $5\leq a$]
		By \cref{thm:big} below we must have $a\leq 41$. Then by \cref{rule:34a} $w_2$ is special to $\face$. 
		\end{description}
In each of these cases we obtain $c(\face)\great 0.001$. Then we may suppose that $\etype(w_1w_2)=(3,a)$ with $a\geq 6$, and since $(3,4,a)\varsubsetneq \vtype(w_2)$ we deduce $\vtype(w_2)=\langle 3,4,3,a\rangle$ and $\etype(w_2w_3)=(3,4)$. Therefore $\langle 3,4,6\rangle \subseteq \vtype(w_3)$. We now see that either \cref{rule:34a} or \cref{rule:334a} applies, so $w_3$ is special to $\face$.
\item[Case $v_1\in\mcl{D}_1$, $v_2,v_3\in\mcl{D}_2$ and $v_4\in\mcl A$]
This case is very similar to the previous one. We define $\tau_1,\tau_2,s_1\in\Faces$ and $w_1,w_2,w_3\in\Verts$ exactly as before, and we rule out the same 3 cases. After that, we see that $\langle 3,4,3\rangle\subseteq \vtype(w_3)$. 
Let also $\efaces(v_3v_4) =(\tau_4,\face)$, so that $\abs{\tau_4}=3$ and $\fverts(\tau_4)=\{v_3,v_4,w_4\}$ for some $w_4\in\Verts$. We cannot have $\vtype(w_3)=(3,3,4,a)$ for $5\leq a\leq 11$, because otherwise we see that $(3,3,a,13)\subseteq \vtype(w_4)$, which is not admissible. We deduce that $w_3$ must be a TS-vertex and by \cref{rule:TS} it is special to $\face$. 
\item[Case $v_1\in\mcl C$, $v_2\in\mcl D_2$ and $v_3\in\mcl{D}_6$]
Let $\tau_1,\tau_2,s_1\in\Faces$ and $w_1,w_2,w_3\in\Verts$ exactly as before.
We have $\langle 11,3,3\rangle\subseteq \vtype(w_1)$.
If $\vtype(w_1)\in\{(3,3,11),(3,3,3,11)\}$ then $w_1$ is special to $\face$ by \cref{rule:33a} or \cref{rule:333a}.
If $\vtype(w_1)=\langle 11,3,3,4\rangle$, then $\langle 4,3,4\rangle\subseteq\vtype(w_2)$.
If $w_2$ is a TS-vertex, it is special to $\face$ by \cref{rule:TS}.
Otherwise, we must have $\vtype(w_2)=\langle 4, 3, 4, 5\rangle$, and so $\vtype(w_3)=(4,5,6)$.
Then $w_3$ is special to $\face$ by \cref{rule:456}.
\item[Case $v_1\in\mcl C$, $v_2,v_3\in\mcl D_2$ and $v_4\in\mcl A$]
Let $\tau_1,\tau_2,s_1\in\Faces$ and $w_1,w_2,w_3\in\Verts$ exactly as before.
Arguing as in the prevous case, we have that either $w_i$ is special to $\face$ for some $i\in\{1,2\}$, or $\langle 5, 4 , 3\rangle \subseteq \vtype(w_3)$.
Let $\efaces(v_3v_4) =(\tau_4,\face)$, so that $\abs{\tau_4}=3$ and $\fverts(\tau_4)=\{v_3,v_4,w_4\}$ for some $w_4\in\Verts$.
We cannot have $\vtype(w_3)=\langle 5, 4 , 3,4\rangle$ because otherwise $(3,3,4,13)\subseteq \vtype(w_4)$, which is not admissible.
Therefore, if $w_1$ and $w_2$ are not special to $\face$, we must have $\vtype(w_3)=\langle 5, 4 , 3\rangle$ or $\langle 5, 4 , 3,3\rangle$.
In either case, $w_3$ is special to $\face$, by \cref{rule:34a} or \cref{rule:334a}.
\item[Case $v_1\in\mcl C$, $v_2,v_3\in\mcl D_2$ and $v_4\in\mcl D_1$]
Let $\tau_1,\tau_2,s_1\in\Faces$ and $w_1,w_2,w_3\in\Verts$ exactly as before.
Arguing as in the prevous case, we have that either $w_i$ is special to $\face$ for some $i\in\{1,2\}$, or $\langle 5, 4 , 3\rangle \subseteq \vtype(w_3)$.
Moreover, if $\vtype(w_3)\in\{\langle 5, 4 , 3\rangle,\langle 5, 4 , 3,3\rangle\}$, then $w_3$ is special to $\face$, by \cref{rule:34a} or \cref{rule:334a}.
As before, let $\efaces(v_3v_4) =(\tau_4,\face)$, so that $\abs{\tau_4}=3$ and $\fverts(\tau_4)=\{v_3,v_4,w_4\}$ for some $w_4\in\Verts$.
Then, if $w_1$, $w_2$ and $w_3$ are not special to $\face$, we must have $\vtype(w_3)=\langle 5, 4 , 3,4\rangle$, and so $\vtype(w_4)=(3,3,4,4)$.
In this last case, $w_4$ is a TS-vertex, and so is special to $\face$, by \cref{rule:TS}.
\end{description}

We now give an approximate description of the remaining cases, by means of 2 lemmas.

\begin{figure}[ht]\centering 
\input{11_lemma1.tikz}
\caption{Illustration for Lemma 8.2.}
\label{fig:11:lemma:1}
\end{figure}

\begin{lemma}\label{lemma:11:1}
Suppose the previous 11 cases don't apply. Then $\exists\, v_1,v_2,v_3\in\fverts(\face)$ consecutive on $\face$ with $v_1,v_3\in \mcl{C}$ and $v_2\in\mcl{D}_1$. In particular $C\geq 4$ and $D\geq 1$.
\end{lemma}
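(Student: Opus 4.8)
The plan is to argue by contradiction: assume $\face$ has no three consecutive boundary vertices $v_1,v_2,v_3$ with $v_1,v_3\in\mcl C$ and $v_2\in\mcl D_1$; I will derive that one of the eleven excluded configurations nonetheless appears on $\face$, contrary to the hypothesis of the lemma. Since all the $\mcl W_i$-cases have already been settled, every vertex of $\fverts(\face)$ lies in $\mcl A\cup\mcl B\cup\mcl C\cup\mcl D$, and the first step is to record, for each of the seven face vectors that can occur here, the unordered pair of side vectors of the two edges it contributes to $\fedges(\face)$: $\{(3,11),(11,13)\}$ for $\mcl A\cup\mcl B$; $\{(3,11),(11,11)\}$ for $\mcl C$; $\{(3,11),(3,11)\}$ for $\mcl D_1$; $\{(3,11),(4,11)\}$ for $\mcl D_2$; $\{(4,11),(6,11)\}$ for $\mcl D_6$; and $\{(3,11),(11,12)\}$ for $\mcl D_{12}$.

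From this table I would first derive a parity count. The edges of $\face$ of side vector $(11,13)$, $(11,11)$, $(11,12)$, $(6,11)$ each induce a perfect matching on $\mcl A\cup\mcl B$, $\mcl C$, $\mcl D_{12}$, $\mcl D_6$ respectively (each such vertex is incident to exactly one edge of that type on $\fedges(\face)$), and the $(4,11)$-edges match $\mcl D_2\cup\mcl D_6$; hence $A+B$, $C$, $D_{12}$, $D_6$, $D_2$ are all even, so $D_1=11-(A+B)-C-D_2-D_6-D_{12}$ is odd, giving $\mcl D_1\cap\fverts(\face)\neq\emptyset$ and $D\geq 1$. Next I would show that both $\face$-neighbours of any $v_0\in\mcl D_1\cap\fverts(\face)$ lie in $\mcl C\cup\mcl D_2$: such a neighbour $u$ is joined to $v_0$ by a $(3,11)$-edge and so lies on a triangle of $\face$, ruling out $\mcl D_6$; the possibilities $u\in\mcl D_1$, $u\in\mcl D_{12}$, $u\in\mcl A$ are three of the eleven excluded configurations; and $u\in\mcl B$ is impossible, because then the unique triangle $\tau$ of $u$ is the triangle across $v_0u$ from $\face$, the edge of $\tau$ opposite $u$ is the edge of $v_0$ between $\tau$ and the square in $\vfaces(v_0)$, so $\oppf(u,\tau)$ is that square and $\abs{\oppf(u,\tau)}=4\neq 11$, contradicting \cref{def:blue}. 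Combined with the no-triple assumption and with the excluded configuration $(\mcl D_2,\mcl D_1,\mcl D_2)$, this forces every $\mcl D_1$-vertex on $\face$ to have exactly one $\mcl C$-neighbour and exactly one $\mcl D_2$-neighbour.

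The heart of the proof is then a propagation around the boundary of $\face$, started at a $\mcl D_1$-vertex $v_0$ (which exists by the parity count). Walking toward its $\mcl D_2$-neighbour and continuing, at each step I read off from the table which edge type the walk is crossing, constrain the next vertex, and eliminate all but one candidate using — as appropriate — the matchings above, one of the excluded configurations ($(\mcl D_1,\mcl D_2,\mcl D_6)$; two $\mcl D_2$'s across a $(3,11)$-edge; $(\mcl D_2,\mcl D_{12})$; $(\mcl D_1,\mcl D_2,\mcl D_2,\mcl A)$; $(\mcl C,\mcl D_2,\mcl D_6)$; $(\mcl C,\mcl D_2,\mcl D_2,\mcl D_1)$), the no-triple assumption, or the same $\oppf$ computation (a vertex reached along a $(3,11)$-edge out of a $\mcl D_2$-vertex cannot be a $\beta$-vertex, its $\oppf$ across its triangle being the \emph{second} triangle of that $\mcl D_2$-vertex, of size $3$). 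One is forced into the block $(\mcl C,\mcl C,\mcl D_1,\mcl D_2,\mcl D_2,\mcl D_1,\mcl C,\mcl C)$ on eight of the eleven vertices, the $\mcl C$'s occurring in adjacent $(11,11)$-matched pairs, and then every possible type for the first of the remaining three vertices is impossible: $\mcl C$ creates a $(\mcl C,\mcl D_1,\mcl C)$ triple; $\mcl D_1$ or $\mcl D_2$ creates a $(\mcl C,\mcl D_2,\mcl D_2,\mcl D_1)$ window; $\mcl A$, $\mcl B$ or $\mcl D_{12}$ forces the last remaining vertex into $\mcl D_1$ with a neighbour outside $\mcl C\cup\mcl D_2$, against the neighbour structure just proved; and $\mcl D_6$ is excluded for the usual triangle reason. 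This contradiction yields the triple. Finally, the triple gives $C\geq 2$ with $v_1,v_3$ non-adjacent on $\face$; since the $(11,11)$-edges match the $\mcl C$-vertices into adjacent pairs, $C$ is even and $C=2$ would force $v_1v_3\in\fedges(\face)$, impossible on an $11$-gon, so $C\geq 4$.

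The step I expect to be the real work is the bookkeeping in the propagation: up to six candidate types may have to be eliminated at each new vertex, each elimination must cite exactly the right excluded configuration, and one must verify that the walk does not close up prematurely (which on an $11$-gon would force a shorter face). A separate minor point is the degeneracy, permitted by \cref{lemma:distinct} since $\abs\face=11$, in which a $\mcl C$-vertex appears twice on $\face$ with both of its $11$-gons equal to $\face$; this affects only the statements about the $(11,11)$-matching and the non-adjacency of $v_1,v_3$, and can be checked directly.
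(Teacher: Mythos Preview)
Your argument is correct, and the first half (the parity computation showing $\#\mcl D_1$ is odd, and the analysis pinning down the two $\face$-neighbours of any $\mcl D_1$-vertex to $\mcl C\cup\mcl D_2$) matches the paper exactly. You are in fact more careful than the paper in one place: you explicitly rule out a $\mcl B$-neighbour via the $\oppf$ computation, whereas the paper simply asserts ``the above case analysis shows $v_1,v_3\in\mcl C\cup\mcl D_2$'' even though only $\mcl A$, not $\mcl B$, was treated among the excluded cases.

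Where you diverge from the paper is in the second half. After establishing that (under the no-triple assumption) every $\mcl D_1$-vertex has exactly one $\mcl C$-neighbour and one $\mcl D_2$-neighbour, you launch a full vertex-by-vertex propagation around the $11$-gon, forcing the block $(\mcl C,\mcl C,\mcl D_1,\mcl D_2,\mcl D_2,\mcl D_1,\mcl C,\mcl C)$ and then exhausting all types for the three remaining vertices. This works, but the paper finishes much faster with a \emph{second} parity observation: starting from a $\mcl D_1$-vertex $v_2$ with neighbours $v_1\in\mcl C$, $v_3\in\mcl D_2$, the excluded configurations force $v_4\in\mcl D_2$ and then $v_5\in\mcl D_1$, $v_6\in\mcl C$; hence the $\mcl D_1$-vertices having a $\mcl D_2$-neighbour occur in pairs $(v_2,v_5)$ joined by a $(\mcl D_1,\mcl D_2,\mcl D_2,\mcl D_1)$ block, so there are evenly many of them. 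Since $\#\mcl D_1$ is odd, some $\mcl D_1$-vertex has no $\mcl D_2$-neighbour, i.e.\ both its neighbours lie in $\mcl C$ --- the desired triple. This replaces your entire endgame enumeration with a one-line parity count, and you already had all the ingredients for it once you reached ``every $\mcl D_1$-vertex has exactly one $\mcl C$-neighbour and one $\mcl D_2$-neighbour''.

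Your derivation of $C\geq 4$ at the end is fine.
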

\begin{proof}
Let 
\[
\mcl{M}=\{e\in\fedges(\face):\ \etype(e)\in\{(4,11),(11,11),(11,12),(11,13)\}\}.
\]
Then $\#\mcl{A}+\#\mcl{B}+\#\mcl{C}+\#\mcl{D}_2+\#\mcl{D}_6+\#\mcl{D}_{12}=2\cdot\#\mcl{M}$. 
Since $11=A+B+C+D$, this implies that there is an odd number of elements of $\mcl{D}_1$. 
Let $ v_1,v_2,v_3,v_4,v_5,v_6\in\fverts(\face)$ be consecutive vertices on $\face$ with $v_2\in\mcl{D}_1$. 
Then the above case analysis shows that $v_1,v_3\in\mcl{C}\cup\mcl{D}_2$ and that we cannot have both $v_1,v_3\in\mcl{D}_2$.
Moreover, if $v_1\in\mcl{C} $ and $v_3\in\mcl{D}_2$, we cannot have $v_4\in\D_6$, so $v_4\in\D_2$. 
We also see that in this case we must have $v_5\in\D_1$
and so $v_6\in\mcl{C}$. 
We deduce that there must be an even number of elements of $\D_1$ which are consecutive on $\face$ to an element of $\D_2$. 
Since $\#\D_1$ is odd, we conclude that there exists an element $v_2\in\D_1$ consecutive on $\face$ only to elements of $\mcl{C}$.
\end{proof}

In addition to the conclusion of \cref{lemma:11:1}, we can make the following easy observation, which we will repeatedly use in the proof of \cref{lemma:11:2}. 

\begin{figure}[ht]\centering 
\input{11_remark1.tikz}
\caption{Illustration for Remark 8.3.}
\label{fig:11:rmk}
\end{figure}

\begin{remark}\label{remark:11}
Suppose that we can find, for $3\leq n\leq 6$,  vertices $v_0,v_1,\ldots,v_n\in\fverts(\face)$  consecutive on $\face$, with $v_0,v_n\in\mcl{C}$ and $v_i\not\in\mcl{C}$ for $i=1,\ld,n-1$.
Then, under the hypothesis of \cref{lemma:11:1}, we can deduce that actually $C\geq 6$.
\end{remark}

\begin{lemma}\label{lemma:11:2}
Suppose the previous 11 cases don't apply. Then $C\geq 6$ or $A\geq 4$.
\end{lemma}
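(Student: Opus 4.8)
The plan is to argue by contradiction. Suppose $C\leq 5$ and $A\leq 3$; since $A$ and $C$ are even (as noted above) and $C\geq 4$ by \cref{lemma:11:1}, this forces $C=4$ and $A\in\{0,2\}$. First I would pin down $B$: we cannot have $B\geq 4$, since that already gives $C\geq 6$; and $B=2$ would place the two $\beta$-vertices, together with a $\mcl{C}$-vertex on either side, into a block $\mcl{C},\mcl{C},\mcl{B},\mcl{B},\mcl{C},\mcl{C}$, so the four vertices of $\mcl{C}$ would cut $\face$ into arcs of lengths $2$ and $5$. On the other hand, re-running the proof of \cref{lemma:11:1} shows that the two $\mcl{C}$-neighbours of the $\mcl{D}_1$-vertex it produces have their other $\face$-neighbours forced into $\mcl{C}$ too (an edge of $\face$ joining two $\mcl{C}$-vertices borders an $11$-face, and $(3,11,11)$ is the only admissible face-vector containing two $11$'s); for $C=4$ this means $\mcl{C}=\{u_0,u_1,u_3,u_4\}$ with $u_2\in\mcl{D}_1$ between $u_1$ and $u_3$, cutting $\face$ into arcs of lengths $1$ and $6$ — incompatible with $\{2,5\}$. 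Hence $B=0$, and \cref{remark:11} confirms that this arc-length multiset $\{6,1,0,0\}$ is indeed the only possibility.

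So we are reduced to the rigid picture $\mcl{C}=\{u_0,u_1,u_3,u_4\}$, $u_2\in\mcl{D}_1$, with the remaining six vertices $u_5,\dots,u_{10}$ forming a single arc of $\face$ that meets $\mcl{C}$ in nothing. A short local computation at $u_0,u_1,u_3,u_4$ (using $\vtype=(3,11,11)$ and the fact that $u_1u_2,u_2u_3$ border triangles because $u_2\in\mcl{D}_1$) identifies $u_0u_1$ and $u_3u_4$ as the two edges of $\face$ bordering an $11$-face, and shows that the two boundary edges $u_4u_5$ and $u_{10}u_0$ of the arc each border a triangle; in particular $u_5,u_{10}\notin\mcl{D}_6$. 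Since the $\mcl{W}_1,\mcl{W}_2,\mcl{W}_3$ cases are excluded, each $u_i$ ($5\leq i\leq10$) lies in $\mcl{A}\cup\mcl{D}_1\cup\mcl{D}_2\cup\mcl{D}_6\cup\mcl{D}_{12}$, and as $B=0$ and $A$ is even, the hypothesis $A\leq2$ amounts to saying that at least four of the $u_i$ lie in $\mcl{D}$.

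The heart of the matter is a finite analysis of the arc $u_5,\dots,u_{10}$, in which I would record, at each vertex, which of its two $\face$-edges carries the triangle and which carries the $13$-, $4$-, $6$- or $12$-face. The admissibility table itself supplies several constraints: no admissible face-vector contains $\{12,13\}$ or two $12$'s (nor two $6$'s), so an $\mcl{A}$-vertex cannot be joined by a triangle-edge to a $\mcl{D}_{12}$-vertex, and a run of consecutive $\mcl{D}_{12}$- (resp.\ $\mcl{D}_6$-) vertices forces a single $12$-face (resp.\ $6$-face) to share several edges of $\face$; likewise any edge of $\face$ bordering a $13$-face has \emph{both} endpoints in $\mcl{A}$, whence every maximal run of $\mcl{A}$-vertices on $\face$ has even length $\geq2$. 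Feeding these in, together with the eleven excluded cases (no $\mcl{D}_1$-$\mcl{D}_1$ edge; no $\{\mcl{D}_{12},\mcl{D}_1\}$ or $\{\mcl{D}_{12},\mcl{D}_2\}$ edge; no $(3,11)$-edge between two $\mcl{D}_2$'s, so a $\mcl{D}_2$-run entered through a triangle-edge has length $\leq2$; no $(\mcl{D}_2,\mcl{D}_1,\mcl{D}_2)$, $(\mcl{D}_1,\mcl{D}_2,\mcl{D}_6)$, $(\mcl{C},\mcl{D}_2,\mcl{D}_2,\mcl{A})$, $(\mcl{C},\mcl{D}_2,\mcl{D}_2,\mcl{D}_1)$ patterns, etc.), with the parity fact that $\#\mcl{D}_1$ is odd, and with the boundary conditions $u_5,u_{10}\notin\mcl{D}_6$, one checks that no filling of the six slots $u_5,\dots,u_{10}$ uses at most two $\mcl{A}$-vertices — save for fillings in which a $\mcl{D}_{12}$- or $\mcl{D}_6$-strip wraps parallel to $\face$ along three consecutive edges, which I would kill by a separate argument on that medium face. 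This contradicts $A\leq2$, so $A\geq4$ and the lemma follows.

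The main obstacle I expect is precisely this last finite analysis: once all four flavours of $\mcl{D}$-vertex and the $\mcl{A}$-vertices are in play there are many candidate patterns for the six-vertex arc, and one must carry through carefully the bookkeeping of which side of each $\mcl{D}_2$- or $\mcl{D}_{12}$-vertex carries the triangle-edge in order to see the forbidden configurations propagate; the ``wrap-around'' configurations, in which a $6$- or $12$-sided face runs alongside $\face$ for three edges, also need their own treatment. I would organise the proof by first eliminating $\mcl{D}_6$- and $\mcl{D}_{12}$-vertices from the arc entirely, reducing to $\mcl{A}$, $\mcl{D}_1$ and $\mcl{D}_2$, where the rules ``$\mcl{A}$-runs are even and $\geq2$'', ``$\mcl{D}_2$-runs from a triangle-edge have length $\leq2$'', ``$\mcl{D}_1$-vertices are pairwise non-adjacent'' and the two boundary conditions leave only a short list of arrangements of six vertices, each containing at least four $\mcl{A}$-vertices.
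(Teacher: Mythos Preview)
Your reduction to the rigid picture is sound: once $C=4$ and $B=0$, the four $\mcl C$-vertices must sit as two adjacent pairs sandwiching the single $\mcl D_1$-vertex guaranteed by \cref{lemma:11:1}, leaving a six-vertex arc $u_5,\dots,u_{10}$ with triangle-edges at both ends. That part is correct and cleanly argued.

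The gap is everything after that. You leave the arc-analysis as ``one checks\ldots'', you concede that $\mcl D_6$- and $\mcl D_{12}$-strips survive your list of constraints, and you defer those to an unspecified ``separate argument on that medium face''. That last idea is a red herring: nothing about the $6$- or $12$-face itself is needed. The paper never looks inside the medium face; it looks only at what can sit \emph{next to} a $\mcl D_6$- or $\mcl D_{12}$-vertex along $\face$, using precisely the eleven excluded adjacency patterns.

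Concretely, the paper organises the proof by the presence of exotic vertex types rather than by first pinning down the $\mcl C$-configuration:
\begin{itemize}
\item If some $v\in\mcl D_{12}$, then the $(11,12)$-edge produces a pair $v_1,v_2\in\mcl D_{12}$; the excluded cases (together with the inadmissibility of $(3,12,12)$ and $(3,12,13)$) force the two outer neighbours into $\mcl C$, so \cref{remark:11} gives $C\geq 6$.
\item If some $v\in\mcl D_6$, then the 4-side of any maximal $\mcl D_6$-block can only be entered via a $\mcl D_2$-vertex whose outer neighbour is an $\alpha$-vertex (and the next one is forced into $\mcl A$ as well, since the opposite face of that triangle is a triangle, not an $11$-gon). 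Doing this in both directions around $\face$ produces four distinct $\mcl A$-vertices; \cref{lemma:11:1} guarantees room for them.
\item If some $v\in\mcl D_2$, its 4-edge neighbour is another $\mcl D_2$ (or $\mcl D_6$, already done); then the excluded four-vertex patterns force the two flanking vertices to be simultaneously in $\mcl A$, simultaneously in $\mcl C$, or simultaneously in $\mcl D_1$, and each option yields $A\geq 4$ or $C\geq 6$.
\item Otherwise $A+C+\#\mcl D_1=11$; every $\mcl D_1$-vertex is flanked by two (unshared) $\mcl C$-vertices, so $\#\mcl D_1\geq 3\Rightarrow C\geq 6$, while $\#\mcl D_1=1\Rightarrow A+C=10$.
\end{itemize}
This five-case split is short and avoids any enumeration of six-slot fillings. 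Your contradiction framework can be made to work, but to finish it you would end up re-deriving exactly these flank-forcing observations for $\mcl D_{12}$, $\mcl D_6$, and $\mcl D_2$; once you have them, the arc-analysis becomes redundant.
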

\begin{proof}
We consider the following 5 cases.
\begin{figure}[ht]\centering 
\input{11_B.tikz}
\quad
\input{11_D2_A.tikz}

\input{11_D12.tikz}
\quad
\input{11_D2_C.tikz}

\input{11_D6.tikz}
\quad
\input{11_D2_D.tikz}
\caption{Illustration for Lemma 8.4.}
\label{fig:11:lemma:2}
\end{figure}
\begin{description}
\item[Case $\exists w\in \mcl{B}$]
If there is a $\beta$-vertex in $\fverts(\face)$, then there is also a blue edge in $\fedges(\face)$, and we have already remarked at the beginning of this section that this implies the existence of $v_0,v_1,v_2,v_3\in\fverts(\face)$ consecutive on $\face$, with $v_0,v_3\in\mcl{C}$ and $v_1,v_2\in\mcl{B}$. Therefore $C\geq 6$ by \cref{remark:11}.
\item[Case $\exists w\in \D_{12}$]
Let $v_0,v_1,v_2,v_3\in\fverts(\face)$ be consecutive on $\face$, with $v_1\in\D_{12}$ and $\etype(v_1v_2)=(11,12)$. 
Then also $v_2\in\D_{12}$. 
By the above case analysis we have that $v_0,v_3\not\in\D_1\cup\D_2$. 
Let $\tau\in\Faces$ with $\abs{\tau}=3$ and $w_1\in\Verts$ such that $\efaces(v_0v_1)=\{\tau,\face\}$ and  $\fverts(\tau)=\{v_0,v_1,w_1\}$.
We cannot have $\vtype(v_0)\in\mcl A\cup \mcl D_{12}$ because otherwise $(3,12,a)\subseteq \vtype(w_1)$ with $a\in\{12,13\}$, which is not admissible.
Analogously $\vtype(v_3)\not\in\mcl A\cup \mcl D_{12}$. 
Since moreover $v_0,v_3\not\in\D_6$, we necessarily have $v_0,v_3\in\mcl{C}$, which implies $C\geq 6$ by \cref{remark:11}.
\item[Case $\exists w\in \D_6$]
Fix an orientation $\pi$ of the sphere. By the above case analysis, if $w_1,w_2,w_3\in\fverts(\face)$ are consecutive on $\face$ with $\etype(w_2w_3)=4$ and $w_3\in\D_6$, then $w_2\in\D_6$, or $w_2\in\mcl \D_2$ and $w_1\in\mcl{A}$. Since $\D_6$ is not all of $\fverts(\face)$, visiting counterclockwise the perimeter of $\face$ we must find $v_0,v_1,v_2,v_3\in\fverts(\face)$ clockwise consecutive on $\face$ with $v_0,v_1\in\mcl{A}$, $v_2\in\D_2$ and $v_3\in\D_6$. Analogously, visiting clockwise the perimeter of $\face$, starting from $v_3$, we must find $v_4,v_5,v_6,v_7\in\fverts(\face)$ clockwise consecutive on $\face$ with $v_6,v_7\in\mcl{A}$, $v_5\in\D_2$ and $v\in\D_6$ for all the edges between $v_3$ and $v_4$ (clockwise). By \cref{lemma:11:1} there must be other vertices in the clockwise segment of $\fverts(\face)$ between $v_7$ and $v_0$. This shows that $v_0,v_1,v_6,v_7$ are four distinct elements of $\mcl{A}$ (they are also distinct as elements of $\Verts$, although we don't need it).
\item[Case $\exists w\in \D_2$]
Let $v_0,v_1,v_2,v_3,v_4,v_5\in\fverts(\face)$ be consecutive on $\face$, with $v_2\in\D_2$ and $\etype(v_2v_3)=(4,11)$. If $v_3\in\D_6$ we proceed as in the previous case, so we can assume $v_3\in\D_2$. By the above case analysis we have that $v_1,v_4\in\mcl{A}$, or $v_1,v_4\in\mcl C$, or $v_1,v_4\in\mcl{D}_1$. In the first case we have that $A\geq 4$, in the second case we have $C\geq 6$ by \cref{remark:11}, while in the third case we have, as in the proof of \cref{lemma:11:1}, that $v_0,v_5\in\mcl{C}$, which implies $C\geq 6$ by \cref{remark:11}.
\item[Case $A+C+\#\D_1=11$]
In this case, as observed in the proof of \cref{lemma:11:1},  
$\#\D_1$ is an odd number and for all $v_1\in\mcl D_1$ there are $v_2,v_3\in\mcl C$ such that $v_1,v_2,v_3$ are consecutive on $\face$. 
Hence, if $\#\D_1\geq 3$ we get $C\geq 6$. 
On the other hand if $\#\D_1 = 1$ we get $A+C=10$, which implies the thesis as well.
\end{description}
\end{proof}
By \cref{lemma:11:2}, we can conclude considering 2 final cases.

\begin{description}
\item[Case $C\geq 6$]
Then $c_+(\face)\great 6 c_3=0.01674$ and $B\leq 4$. 
Since $c_-(\face)\great Bc_1^\beta+(11-B-C)c_2$, 
we get $c_-(\face)\great 4c_1^{\beta}+c_2=-0.01644$, and so $c(\face)\great 0.0003$.
\item[Case $A\geq 4$]
%
Let $v_1,v_2,v_3,v_4\in\fverts(\face)$ be consecutive vertices on $\face$ with 
$v_2,v_3\in\mcl{A}$. 
We cannot have $\vtype(v_1)= (3,11,13)$ for, otherwise, there would exist $\tau\in\Faces$ with $\abs{\tau}=3$, $\fverts(\tau)=\{v_1,v_2,w\}$ and $(3,13,13)\subseteq \vtype(w)$, which is not admissible. 
Analogously $\vtype(v_4)\neq (3,11,13)$. 
Since $v_2v_3$ is not a blue edge, we must have that either $v_1\not\in\mcl{C}$ or $v_4\not\in\mcl{C}$. 
This implies that there are other elements of $\mcl{D}$ other than the one specified by \cref{lemma:11:1}. 
Since $D=11-A-B-C$ is odd, we deduce $D\geq 3$, so we necessarily get $A=4$, $B=0$, $C=4$, $D=3$ by \cref{lemma:11:1}. 
As a consequence, $c_+(\face)\great 4 c_3=0.01116$ and $c_-(\face)\great 4c_1^{\alpha}+3c_2=-0.01084$, hence $c(\face)\great 0.00032$.
\end{description}
\begin{figure}[ht]\centering 
\input{11_1111.tikz}
\quad
\quad
\input{11_1113.tikz}
\caption{Illustration of the final cases for $\protect \abs\face =11$.}
\label{fig:11:final}
\end{figure}

Thus, summing up, we obtain unconditionally that $c(\face)\great 0.0003$.


\section{Analysis of faces with 13 edges}\label{sec:13}

Let $\face\in \FFaces$ with $\abs{\face}=13$ and $\pair(\face)\neq 0$.
With our construction of the pairing, we have $\pair(v,\face)\neq 0$ if and only if $v\in\fverts(\face)$,  and $\vtype(v)$ is one of the 5 multisets listed in \cref{table:13}. 
As in the previous section, we refer to \cref{def:blue} for the definition of $\alpha$-vertices, $\beta$-vertices and blue edges.
We also define the constants 
$c_1^{\alpha}=-0.00721$, $c_1^{\beta}=-0.00481$ and $c_2=0.06735$. 

\begin{table}[H]\centering
\begin{tabular}{ccccl}
\toprule 
$\vtype(v)$
	\ \ \ &\ \ \ 
\text{where}
	\ &\
$\pair(v,f)$
	\ \ \ & \ 
 $c_v\pair(v,\face)$&\\

\midrule

$(3,3,13)$
	\ \ \ &\ \ \ 

	 \ & \ 
$1$
	\ \ \ &\ \ \ 
$\great 0.234$& \\

\midrule

$(3,4,13)$
	\ \ \ &\ \ \ 

	 \ & \ 
$\frac{1}{2}$
	\ \ \ &\ \ \ 
$\great 0.075$ &\\

\midrule

$(3,11,13)$
	\ \ \ &\ \ \ 
$v$ is $\alpha$-vertex
	 \ & \ 
$\frac{6}{7}$
	\ \ \ &\ \ \ 
$\great -0.00721$&$=c_1^{\alpha}$\\

\midrule 

$(3,11,13)$
	\ \ \ &\ \ \ 
$v$ is $\beta$-vertex
	 \ & \ 
$\frac{4}{7}$
	\ \ \ &\ \ \ 
$\great -0.00481$&$=c_1^{\beta}$\\

\midrule

$(4,4,13)$
	\ \ \ &\ \ \ 

	 \ & \ 
$1$
	\ \ \ &\ \ \ 
$\great 0.06735$&$=c_2$\\

\midrule

$(3,3,3,13)$
	\ \ \ &\ \ \ 

	 \ & \ 
$1$
	\ \ \ &\ \ \ 
$\great 0.06735$&$=c_2$\\

\bottomrule
\end{tabular}
\caption{Vertices contributing nontrivially when $\protect\abs\face = 13$.}
\label{table:13}
\end{table}

\vspace{3pt}

Let $A=\#\mcl{A}$, $B=\#\mcl{B}$, $C=\#\mcl{C}$, $L=\#\mcl{L}$ and $M=\#\mcl{M}$, where
\[
\begin{aligned}
	\mcl{A}&=\{v\in\fverts(\face):\ v \text{ is an $\alpha$-vertex}\},\\
	\mcl{B}&=\{v\in\fverts(\face):\ v \text{ is a $\beta$-vertex}\},\\
	\mcl{C}&=\{v\in\fverts(\face):\ \vtype(v)\in\{(3,3,13), (3,4,13), (4,4,13), (3,3,3,13)\}\},\\
	\mcl{L}&=\{e\in\fedges(\face):\ \etype(e)=(11,13)\},\\
	\mcl{M}&=\{e\in\fedges(\face):\ e \text{ is a blue edge}\}.
\end{aligned}
\]
Clearly $c_-(\face)\geq A c_1^{\alpha}+B c_1^{\beta}$, and $c_+(\face)\geq C c_2$. 
We have $2L=A+B\leq 12$, since it is both even and at most 13. Moreover, $\beta$-vertices are by definition the endpoints of blue edges, and so $B=2M$ is even. We also notice that $\mcl{C}$ is not empty, because otherwise for every $v\in\fverts(\face)$ we would have $\vtype(v)=(a,b,13)$ for some $a\in\{3,4\}$ and $5\leq b\leq 11$. This would imply that half of the edges $e\in\fedges(\face)$ would satisfy $\etype(e)=(3,13)$ or $\etype(e)=(4,13)$, but this is not possible because $\#\fedges(\face)=13$ is odd.
We now prove that $c(\face)\great 0$ considering the following 3 cases.
\begin{figure}[ht]\centering
\input{13_A10.tikz}

\input{13_A12.tikz}
\caption{Illustrations for $\protect\abs\face = 13$.}
\label{fig:13}
\end{figure}
\begin{description}
\item[Case $A+B\leq 8$]
Then $c_-(\face)\great 8c_1^{\alpha}=-0.05768$ and $c_+(\face)\great c_2=0.06735$, so $c(\face)\great 0.00967$.
\item[Case $A+B=10$ and $B\geq 2$]
Then $c_-(\face)\great 8c_1^{\alpha}+2c_1^{\beta}=-0.0673$ and $c_+(\face)\great c_2=0.06735$, so $c(\face)\great 0.00005$.
\item[Case $A=10$ and $B=0$]
By the definition of blue edges, it's not possible to have more than 4 consecutive $\alpha$-vertices on $\face$. 
With this observation, if we let $\fverts(\face)=\langle v_1,v_2,\ldots,v_{12},v_{13}\rangle$, then, up to cyclic reordering, we must have $\fverts(\face)\bsl\mcl{A}=\{v_2, v_7, v_{12}\}$.
Moreover for $i\in\{2,7,12\}$ we also have $\etype(v_{i-1}v_i)=\etype(v_iv_{i+1})=(3,13)$. This implies that for $i\in\{2,7,12\}$ we have $\vtype(v_i)=(3,3,13)$ or $\vtype(v_i)=(3,3,3,13)$. As a consequence we get $C=3$, so $c_+(\face)\great 3 c_2= 0.20205$. Since $c_-(\face)\great 10 c_1^{\alpha}\great -0.0721$, we obtain $c(\face)\great 0.12995$.
\item[Case $A+B=12$]
Write $\fverts(\face)=\langle v_1,\ldots,v_{13}\rangle$ as above. Without loss of generality we suppose that $v_1$ is the only element of $\fverts(\face)$ which is not in $\mcl{A}\cup\mcl{B}$. Then we immediately see that $\mcl{A}=\{v_2,v_3,v_{12},v_{13}\}$, so $A=4$ and $B=8$. Therefore $c_-(\face)\great 4c_1^{\alpha}+8c_1^{\beta}= 0.06732$. Since $c_+(\face)\great c_2=0.06735$ we get $c(\face)\great 0.00003$.
\end{description}

In any case we obtain $c(\face)\great 0.00003$.


\section{Analysis of faces with $N$ edges, where $14\leq N\leq 39$ and $N\neq 19$}\label{sec:N}

Let $\face\in \FFaces$ with $\pair(\face)\neq 0$ and $\abs{\face}=N$, where $14\leq N\leq 39$ and $N\neq 19$.
With our construction of the pairing, we have $\pair(v,\face)\neq 0$ if and only if $v\in\fverts(\face)$ and $3\in \vtype(v)$, or if $v\not\in\fverts(\face)$ but $v$ is special to $\face$ as specified by rule \cref{rule:3ab}. We list all the possibilities in \cref{table:N}, and we set the constants 
$c_1=-0.0078$, 
$c_2=0.016$ and 
$c_3=0.023$. 

\begin{table}[H]\centering
\begin{tabular}{ccccl}
\toprule
$\vtype(v)$
	&
\text{where}
 &
$\pair(v,\face)$
	 & 
 $c_v\pair(v,\face)$& \\

\midrule

$(3,3,N)$
	& 

	 & 
$1$
	&
$\great 0.182$&\\

\midrule

$(3,4,N)$
	& 

	 &
$\frac{1}{2}$
	  
$\great 0.049$&\\

\midrule

$(3,5,N)$
	 &  

	& 
$\frac{1}{2}$
	& 
$\great 0.024$&\\

\midrule

$(3,6,N)$
	 &

 &
$1$
	 &
$\great 0.016$&$=c_2$\\ 

\midrule

$(3,7,a)$
	 & 
\cref{rule:3ab}, $a\in\{7,10\}$
	& 
$1$
	&
$\great 0.066$&\\

\midrule

$(3,7,a)$
	&
\cref{rule:3ab}, $a\in\{8,9\}$
	 & 
$\frac{1}{2}$
	&
$\great 0.038$&\\

\midrule

$(3,7,N)$
	&

	  &
$1$
	&
$\great -0.0078$&$=c_1$\\

\midrule

$(3,a,b)$
	&
\cref{rule:3ab}, $a,b\in\{8,9,10\}$
	 
$1$
	&
$\great 0.023$&$=c_3$\\

\midrule

$(3,8,N)$
	&
$N\leq 23$
	 &  
$1$
	&
$\great -0.0078$&$=c_1$\\

\midrule

$(3,9,N)$
	 &
$N\leq 17$
	 &
$1$
	& 
$\great -0.0064$&\\

\midrule

$(3,10,N)$
	 &
$N=14$
	 & 
$1$
	&
$\great -0.0049$&\\

\midrule

$(3,3,3,N)$
	 & 

	& 
$1$
	&
$\great 0.016$&$=c_2$\\

\bottomrule
\end{tabular}
\caption{Vertices contributing nontrivially when $ 14\leq \protect\abs\face \leq 39$, $\protect\abs\face \neq 19$.}
\label{table:N}
\end{table}

If $v\in\Verts$ is a vertex special to $\face$, then $\vtype(v)=(3,a,b)$ for some $a,b\in\{7,8,9,10\}$. Therefore there exists a well defined $\Ntau_v\in\vfaces(v)$ such that $\abs{\Ntau_v}=3$. Moreover, there exists a well defined $\Nedge_v\in\fedges(\face)$ with $\Nedge_v\in\fedges(\Ntau_v)$, and so also $\etype(\Nedge_v)=(3,N)$. Now consider
\[
\begin{aligned}
  \mcl{A}&:=\{v\in \fverts(\face):\ \vtype(v)=(3,a,N), \ a\in\{7,8,9,10\}\},\\
	\NVerts&:=\{v\in \fverts(\face):\ 3\in \vtype(v)\}
	  \cup
		\{v\in \Verts:\ v \text{ is special to $\face$}\},\\
	\NEdges&:=\{e\in\fedges(\face):\ \etype(e)=(3,N)\}.
\end{aligned}
\]
In order to prove that $c(\face)\great 0$ we employ the same strategy that lies behind our proof of \cref{thm:main}. Namely, we are about to define a pairing $\Npair:\NVerts\times\NEdges\to\Q_{\geq 0}$, so that we can decompose the total contribution $c(\face)$ as a sum of easier local contributions $\Ncon(e)$, for $e\in\NEdges$ (see \cref{lemma:N:1} below).
Notice that $\NVerts$ and $\NEdges$ are sets, by \cref{lemma:distinct}.

\begin{definition}\label{def:lpair}
We define $\Npair:\NVerts\times\NEdges\to\Q_{\geq 0}$ to be the only function such that, for every  $v\in\NVerts$ and $e\in\NEdges$, we have:
\begin{itemize}
\item
if $\vtype(v)=(3,3,N)$ or $\vtype(v)=(3,3,3,N)$, and $e\in\vedges(v)$, then $\Npair(v,e)=\frac{1}{2}\pair(v,\face)$;
\item
if $3\in\vtype(v)$ but $(3,3)\not\subseteq \vtype(v)$, and $e\in\vedges(v)$, then $\Npair(v,e)=\pair(v,\face)$;
\item
if $v$ is special to $\face$ and $e=\Nedge_v$, then $\Npair(v,e)=\pair(v,\face)$; 
\item
otherwise $\Npair(v,e)=0$.
\end{itemize}
\end{definition}

For every $e\in\NEdges$ we let $\Ncon(e):=\sum_{v\in\NVerts} c_v\Npair(v,e)$. The analogue of \eqref{eq:pairing} is provided by the fact that for every $v\in\NVerts$ we have
\[
\sum_{e\in\NEdges} \Npair(v,e)=\pair(v,\face).
\]
Proceeding as in \cref{lemma:main}, we deduce the following lemma.
\begin{lemma}\label{lemma:N:1}
\[c(\face) = \sum_{e\in\NEdges} \Ncon(e)\]
\end{lemma}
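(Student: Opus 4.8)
The statement to prove is \cref{lemma:N:1}, namely that $c(\face) = \sum_{e\in\NEdges} \Ncon(e)$. This is the exact analogue of \cref{lemma:main}, where the role of the Euler-Poincaré-based identity \eqref{eq:total_curvature} is now played by the simpler combinatorial identity $\sum_{e\in\NEdges}\Npair(v,e) = \pair(v,\face)$ that the paper has just recorded. So the proof should be a short exchange-of-summation argument.

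The plan is as follows. First I would verify carefully the claimed identity $\sum_{e\in\NEdges}\Npair(v,e)=\pair(v,\face)$ for each $v\in\NVerts$, going through the four clauses of \cref{def:lpair}. If $\vtype(v)\in\{(3,3,N),(3,3,3,N)\}$ then $v$ has exactly two incident edges $e\in\NEdges$ (the two edges of $\face$ at $v$, which have $\etype(e)=(3,N)$ because the two $3$-faces at $v$ sit on either side of these edges), and $v$ contributes $\tfrac12\pair(v,\face)$ to each, summing to $\pair(v,\face)$; here one uses \cref{lemma:distinct} to know $\fverts(\face)$ and $\fedges(\face)$ are genuine sets so these two edges are distinct. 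If $3\in\vtype(v)$ but $(3,3)\not\subseteq\vtype(v)$, then again $v$ lies on $\face$ and has two incident edges on $\face$, but now only one of them is in $\NEdges$: the unique edge of $\face$ at $v$ whose other face is a triangle has $\etype = (3,N)$, while the other incident edge of $\face$ at $v$ does not (its other face has size $\geq 4$). Wait — I should double-check this: a vertex with $3\in\vtype(v)$ on $\face$ and not of the form $(3,3,\dots)$ has exactly one triangle in $\vfaces(v)$, and that triangle is adjacent to exactly one of the two edges of $\face$ at $v$; that edge is then in $\NEdges$ and receives $\pair(v,\face)$, the rest receive $0$. If $v$ is special to $\face$, then by the discussion after \cref{def:lpair} there is a well-defined $\Nedge_v\in\NEdges$ and $v$ contributes $\pair(v,\face)$ to it and $0$ elsewhere. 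In all cases the sum is $\pair(v,\face)$; for $v\in\NVerts$ with $\pair(v,\face)=0$ both sides are $0$. Actually the paper asserts this identity as already known, so in the write-up I would simply cite it.

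Next, with that identity in hand, I would compute
\[
\sum_{e\in\NEdges}\Ncon(e)=\sum_{e\in\NEdges}\sum_{v\in\NVerts}c_v\,\Npair(v,e)
=\sum_{v\in\NVerts}c_v\sum_{e\in\NEdges}\Npair(v,e)
=\sum_{v\in\NVerts}c_v\,\pair(v,\face),
\]
where the interchange is justified because $\NVerts$ and $\NEdges$ are finite sets. Finally I would identify $\sum_{v\in\NVerts}c_v\,\pair(v,\face)$ with $c(\face)=\sum_{v\in\Verts}c_v\,\pair(v,\face)$: this holds because for every $v\in\Verts\setminus\NVerts$ we have $\pair(v,\face)=0$. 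Indeed, $\pair(v,\face)\neq 0$ forces (as recalled at the start of \cref{sec:N}) either $v\in\fverts(\face)$ with $3\in\vtype(v)$, or $v$ special to $\face$ — and in both cases $v\in\NVerts$ by definition. Hence the two sums agree and $\sum_{e\in\NEdges}\Ncon(e)=c(\face)$.

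The argument is entirely formal bookkeeping, so there is no real obstacle; the only point requiring a little care — and the main thing to get right — is the case analysis showing $\sum_{e\in\NEdges}\Npair(v,e)=\pair(v,\face)$, in particular that a vertex with $3\in\vtype(v)$, $(3,3)\not\subseteq\vtype(v)$ contributes to exactly one edge of $\NEdges$, and that the vertices of type $(3,3,N)$ or $(3,3,3,N)$ contribute to exactly two distinct such edges (using \cref{lemma:distinct} for distinctness). Since the paper has already stated that identity, the proof here can be the two displayed lines above plus the remark that $\pair(v,\face)=0$ off $\NVerts$, exactly mirroring the proof of \cref{lemma:main}.
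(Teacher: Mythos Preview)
Your proposal is correct and follows exactly the approach the paper intends: the paper's own proof is simply the one-line remark ``Proceeding as in \cref{lemma:main}'', i.e.\ interchange the finite sums using the identity $\sum_{e\in\NEdges}\Npair(v,e)=\pair(v,\face)$ (the analogue of \eqref{eq:pairing}) and then observe that $\pair(v,\face)=0$ for $v\notin\NVerts$. Your additional verification of that identity via the case split on $\vtype(v)$ is accurate and more explicit than what the paper writes, but not different in substance.
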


Since we assumed $\pair(\face)\neq 0$, the (multi)set $\NEdges$ is nonempty. Therefore, to prove that $c(\face)\great 0$, it suffices to prove that $\Ncon(e)\great 0$ for every $e\in\NEdges$.
Let then $e\in\NEdges$ and $\everts(e)=\{v_1,v_2\}$. We distinguish, without loss of generality, between 3 cases.

\begin{figure}[ht]\centering
\input{N_AA.tikz}
\caption{Illustration for $\protect \abs \face = N$,  where $14\leq N\leq 39$ and $N\neq 19$}
\label{fig:N}
\end{figure}
\begin{description}
\item[Case $v_1,v_2\not\in\mcl{A}$]
Then, we see that $\Ncon(e)=c_{v_1}\Npair(v_1,e)+c_{v_2}\Npair(v_2,e)\great \frac{1}{2}c_2+\frac{1}{2}c_2=0.016$.
\item[Case $v_1\in\mcl{A}$, $v_2\not\in\mcl{A}$]
Then, $\Ncon(e)=c_{v_1}\Npair(v_1,e)+c_{v_2}\Npair(v_2,e)\great c_1+\frac{1}{2}c_2=0.0002$.
\item[Case $v_1,v_2\in\mcl{A}$]
Let $\efaces(e)=(\tau,\face)$. Since $\etype(e)=(3,N)$ the face $\tau$ is a triangle, so there exists $v\in\Verts$ such that $\fverts(\tau)=\langle v_1, v_2, v\rangle$. By definition of $\mcl{A}$ we have that $\vtype(v_1)=(3,a,N)$ and $\vtype(v_2)=(3,b,N)$ for some $a,b\in\{7,8,9,10\}$. Then we must have $\vtype(v)=\{3,a,b\}$ and by \cref{rule:3ab} we have that $v$ is special to $\face$. In particular, $\tau=\Ntau_v$ and $e=\Nedge_v$. Therefore $\Ncon(e)=c_{v_1}\Npair(v_1,e)+c_{v_2}\Npair(v_2,e)+c_v\Npair(v,e)\great c_1+c_1+c_3\great 0.007$.
\end{description}

In any case $\Ncon(e)\great 0.0002$ for all $e\in\NEdges$. Hence we get $c(\face)\great 0.0002$ as well by \cref{lemma:N:1}.


\section{Analysis of faces with 19 edges}\label{sec:19}

Let $\face\in \FFaces$ with $\abs{\face}=19$ and $\pair(\face)\neq 0$.
With our construction of the pairing, we have $\pair(v,\face)\neq 0$ if and only if $v\in\fverts(\face)$ and either $4\in \vtype(v)$ or $5\in \vtype(v)$. We list all the possibilities in \cref{table:19}, and we define the constants 
$c_1:=-0.00175$ and 
$c_2=0.038$. 

\begin{table}[H]\centering
\begin{tabular}{ccccl}
\toprule
$\vtype(v)$
	\ \ \ &\ \
\text{where}
	\ \ &\
$\pair(v,\face)$
	\ \ \ & \ 
 $c_v\pair(v,\face)$& \\

\midrule

$(3,4,19)$
	\ \ \ &\ \ \ 

	 \ & \ 
$\frac{1}{2}$
	\ \ \ &\ \ \ 
$\great 0.063$&\\

\midrule

$(3,5,19)$
	\ \ \ &\ \ \ 

	 \ & \ 
$\frac{1}{2}$
	\ \ \ &\ \ \ 
$\great 0.038$& $=c_2$\\

\midrule

$(4,4,19)$
	\ \ \ &\ \ \ 

	 \ & \ 
$1$
	\ \ \ &\ \ \ 
$\great 0.043$ &\\

\midrule

$(4,5,19)$
	\ \ \ &\ \ \ 

	 \ & \ 
$\frac{1}{4}$
	\ \ \ &\ \ \ 
$\great -0.00175$ &$=c_1$\\

\bottomrule
\end{tabular}
\caption{Vertices contributing nontrivially when $\protect \abs \face = 19$.}
\label{table:19}
\end{table}

Let $A=\#\mcl{A}$, $B=\#\mcl{B}$ and $L=\#\mcl{L}$, where
\[
\begin{aligned}
	\mcl{A}&=\{v\in\fverts(\face):\ \vtype(v)=\{(4,5,19)\},\\
	\mcl{B}&=\{v\in\fverts(\face):\ \vtype(v)=\{(3,5,19)\}\},\\
	\mcl{L}&=\{e\in\fedges(\face):\ \etype(e)=(5,19)\}.
\end{aligned}
\]
We notice that $c_-(\face)\geq A c_1$ and that $2L=A+B\leq 18$, since it is both even and at most 19.
We now prove that $c(\face)\great 0$ considering the following 2 cases.

\begin{description}
\item[Case $A=0$]
Here all the contributions are positive and so $c(\face)=c_+(\face)\great c_2=0.038$.
\item[Case $A\great 0$]
Since $1\leq A\leq 18$ we have that $\mcl{A}\neq \emptyset$ and $\mcl{A}\neq \fverts(\face)$. Therefore, there must exist $v\in\mcl{A}$ and $w\in\fverts(\face)\bsl\mcl{A}$ which are consecutive vertices of $\face$. In this case $\vtype(w)\neq(4,5,19)$, but $4\in\vtype(w)$ or $5\in \vtype(w)$, so is one of the other multisets listed in the table above. Hence $c_+(\face)\geq c_w\pair(w,\face)\great c_2=0.038$ and $c_-(\face)\great 18 c_1=-0.0315$, so we get $c(\face)\great 0.0065$.
\end{description}
In either case we obtain $c(\face)\great 0.0065$.


\section{Analysis of faces with 40 or 41 edges}\label{sec:4041}

Let $\face\in \FFaces$ with $\pair(\face)\neq 0$ and $\abs{\face}=N$, where $N\in\{40,41\}$.
With our construction of the pairing, we have $\pair(v,\face)\neq 0$ if and only if $v\in\fverts(\face)$ and $3\in \vtype(v)$, or if $v\not\in\fverts(\face)$ but $v$ is special to $\face$ as specified by rules \cref{rule:TS}, \cref{rule:3ab}, \cref{rule:333a} or \cref{rule:33335}. We list all the possibilities in \cref{table:4041}, and we set the constants 
$c_0=0.023$, 
$c_1=-0.009$, 
$c_2=-0.0084$, 
$c_3=0.181$, 
$c_4=0.049$, 
$c_5=0.048$, 
$c_6=0.0148$, 
$c_7=0.109$. 

\begin{table}[H]\centering
\begin{tabular}{ccccl}
\toprule
$\vtype(v)$
	\ \ \ &\ \ \ 
\text{by}
	\ &\
$\pair(v,\face)$
	\ \ \ & \ 
 $c_v\pair(v,\face)$& \\

\midrule

$\{\text{only 3,4}\}$ 
	\ \ \ &\ \ \ 
\cref{rule:TS}
	 \ & \ 
$\frac{1}{3}$
	\ \ \ &\ \ \ 
$\great 0.024$&$\great c_0$\\

\midrule

$(3,3,N)$
	\ \ \ &\ \ \ 

	 \ & \ 
$1$
	\ \ \ &\ \ \ 
$\great 0.181$&$=c_3$\\

\midrule

$(3,4,N)$
	\ \ \ &\ \ \ 

	 \ & \ 
$\frac{1}{2}$
	\ \ \ &\ \ \ 
$\great 0.049$&$=c_4$\\

\midrule

$(3,5,N)$
	\ \ \ &\ \ \ 

	 \ & \ 
$1$
	\ \ \ &\ \ \ 
$\great 0.048$&$=c_5$\\

\midrule

$(3,6,7)$
	\ \ \ &\ \ \ 
\cref{rule:3ab}
	 \ & \ 
$1$
	\ \ \ &\ \ \ 
$\great 0.133$&$\great c_7$\\

\midrule

$(3,6,N)$
	\ \ \ &\ \ \ 

	 \ & \ 
$1$
	\ \ \ &\ \ \ 
$\great 0.0148$&$=c_6$\\  

\midrule

$(3,7,7)$
	\ \ \ &\ \ \ 
\cref{rule:3ab}
	 \ & \ 
$1$
	\ \ \ &\ \ \ 
$\great 0.109$&$=c_7$\\

\midrule

$(3,7,40)$
	\ \ \ &\ \ \ 

	 \ & \ 
$1$
	\ \ \ &\ \ \ 
$\great -0.0084$&$=c_2$\\

\midrule

$(3,7,41)$
	\ \ \ &\ \ \ 

	 \ & \ 
$1$
	\ \ \ &\ \ \ 
$\great -0.009$&$=c_1$\\

\midrule

$(3,3,3,5)$
	\ \ \ &\ \ \ 
\cref{rule:333a}
	 \ & \ 
$1$
	\ \ \ &\ \ \ 
$\great 0.190$&$\great c_0$\\

\midrule

$(3,3,3,N)$
	\ \ \ &\ \ \ 

	 \ & \ 
$1$
	\ \ \ &\ \ \ 
$\great 0.0148$&$=c_6$\\

\midrule

$(3,3,3,3,5)$
	\ \ \ &\ \ \ 
\cref{rule:33335}
	 \ & \ 
$1$
	\ \ \ &\ \ \ 
$\great 0.023$&$=c_0$\\

\bottomrule
\end{tabular}
\caption{Vertices contributing nontrivially when $\protect \abs\face \in\{40,41\}$.}
\label{table:4041}
\end{table}

\vspace{3pt}

If $v\in\Verts$ is a vertex special to $\face$, then $\vtype(v)=(3,6,7)$, $\vtype(v)=(3,7,7)$, $\vtype(v)=(3,3,3,5)$, $\vtype(v)=(3,3,3,3,5)$ or $v$ is a TS vertex. In case $\vtype(v)=(3,6,7)$ or $\vtype(v)=(3,7,7)$, there exist well defined $\ltau_v\in\vfaces(v)$ and $\ledge_v\in\fedges(\face)$ such that $\abs{\ltau_v}=3$, $\ledge_v\in\fedges(\ltau_v)$, and so also $\etype(\ledge_v)=(3,N)$. Now we consider
\[
\begin{aligned}
	\mcl{S}&:=\{v\in \fverts(\face):\ \text{$v$ is special to $\face$},\ \vtype(v)\neq (3,6,7),\ \vtype(v)\neq (3,7,7)\},\\
  \mcl{A}&:=\{v\in \fverts(\face):\ \vtype(v)=(3,7,N)\},\\
	\mcl{L}&:=\{e\in\fedges(\face):\ \etype(e)=(7,N)\},\\
	\lVerts&:=\{v\in \fverts(\face):\ 3\in \vtype(v)\}
	  \cup
		\{v\in \Verts:\ v \text{ is special to $\face$}\},\\
	\lEdges&:=\{e\in\fedges(\face):\ \etype(e)=(3,N)\}.
\end{aligned}
\]
Notice that $\lVerts$ and $\lEdges$ are sets. Indeed, $\Verts$ is a set by definition, while $\fverts(\face)$ and $\fedges(\face)$ are sets by \cref{lemma:distinct}.
In order to prove that $c(\face)\great 0$ we elaborate on the strategy used in \cref{sec:N}.

\begin{definition}\label{def:lpair:4041}
We introduce an auxiliary symbol $\lddface$ and we define $\lpair:\lVerts\times(\lEdges\cup\{\lddface\})\to\Q_{\geq 0}$ to be the only function such that, for every  $v\in\lVerts$ and $e\in\lEdges\cup\{\lddface\}$, we have:
\begin{itemize}
\item
if $\vtype(v)=(3,3,N)$ or $\vtype(v)=(3,3,3,N)$, and $e\in\vedges(v)$, then $\lpair(v,e)=\frac{1}{2}\pair(v,\face)$;
\item
if $(3,N)\subseteq \vtype(v)$ but $(3,3)\not\subseteq \vtype(v)$, and $e\in\vedges(v)$, then $\lpair(v,e)=\pair(v,\face)$;
\item
if $v$ is special to $\face$, $v\not \in\mcl S$ and $e=\ledge_v$, then $\lpair(v,e)=\pair(v,\face)$; 
\item
if $v\in\mcl{S}$ and $e=\lddface$, then $\lpair(v,e)=\pair(v,\face)$;
\item
otherwise $\lpair(v,e)=0$.
\end{itemize}
\end{definition}

The analogue of \eqref{eq:pairing} is provided by the fact that for every $v\in\lVerts$ we have
\[
\sum_{e\in\lEdges\cup\{\lddface\}} \lpair(v,e)=\pair(v,\face).
\]
For every $e\in\lEdges\cup\{\lddface\}$ we let $\lcon(e):=\sum_{v\in\lVerts} c_v\lpair(v,e)$. Let also
\[
\lcon_-(\face):=\sum_{\substack{e\in\lEdges\cup\{\lddface\}\\ \lcon(e)\less 0}} \lcon(e)
\quad
\text{ and }
\quad
\lcon_+(\face):=\sum_{\substack{e\in\lEdges\cup\{\lddface\}\\ \lcon(e)\geq 0}} \lcon(e).
\]
Proceeding as in \cref{lemma:main}, we deduce the following lemma.
\begin{lemma}\label{lemma:4041:1}
\[c(\face) = \lcon_-(\face)+\lcon_+(\face).\]
\end{lemma}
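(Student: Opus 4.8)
The plan is to mimic the proof of Lemma~\ref{lemma:main} verbatim, transporting the ``local contribution'' decomposition from the level of the whole graph down to the single face $\face$. The starting point is the identity
\[
c(\face) = \sum_{v\in\Verts} c_v\,\pair(v,\face) = \sum_{v\in\lVerts} c_v\,\pair(v,\face),
\]
where the second equality holds because, by construction of $\pair$, a vertex $v$ with $\pair(v,\face)\neq 0$ either lies on $\fverts(\face)$ with $3\in\vtype(v)$ or is special to $\face$ via one of the rules \cref{rule:TS}, \cref{rule:3ab}, \cref{rule:333a}, \cref{rule:33335}; in every such case $v\in\lVerts$ (and all other $v$ contribute $0$).

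Next I would expand each $\pair(v,\face)$ using the partition-of-unity property of $\lpair$ recorded just above the statement, namely $\sum_{e\in\lEdges\cup\{\lddface\}}\lpair(v,e)=\pair(v,\face)$ for every $v\in\lVerts$. Substituting and exchanging the two finite sums gives
\[
c(\face) = \sum_{v\in\lVerts} c_v \sum_{e\in\lEdges\cup\{\lddface\}} \lpair(v,e)
= \sum_{e\in\lEdges\cup\{\lddface\}} \sum_{v\in\lVerts} c_v\,\lpair(v,e)
= \sum_{e\in\lEdges\cup\{\lddface\}} \lcon(e),
\]
the last equality being the definition of $\lcon(e)$. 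Finally, splitting the index set $\lEdges\cup\{\lddface\}$ according to the sign of $\lcon(e)$ — which is legitimate since this is a finite set and every term lands in exactly one of the two groups — yields $\sum_{e}\lcon(e) = \lcon_-(\face)+\lcon_+(\face)$, which is the claimed identity.

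The only point requiring a word of care is that $\lpair$ is a well-defined function on $\lVerts\times(\lEdges\cup\{\lddface\})$: the defining clauses in \cref{def:lpair:4041} must be mutually exclusive and exhaustive. This is where one invokes that $\lEdges$ and $\lVerts$ are genuine \emph{sets} (not multisets), as noted right after their definition via \cref{lemma:distinct}, so that ``$e\in\vedges(v)$'' and ``$e=\ledge_v$'' pick out unambiguous elements; and one checks the clauses are disjoint by observing that the face-vector conditions $(3,3)\subseteq\vtype(v)$, $(3,N)\subseteq\vtype(v)$ with $(3,3)\not\subseteq\vtype(v)$, ``$v$ special and $v\notin\mcl S$'', and ``$v\in\mcl S$'' are pairwise incompatible for the relevant admissible vertices. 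Since this well-definedness is already implicitly granted by the phrasing ``the only function such that'' in \cref{def:lpair:4041}, there is in fact no real obstacle here: the proof is a one-line double-counting argument identical in structure to Lemma~\ref{lemma:main}, and the main (very mild) thing to verify is simply that the rewriting $\sum_v\sum_e = \sum_e\sum_v$ is applied to finite sums, which it is.
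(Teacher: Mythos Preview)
Your proposal is correct and follows exactly the approach the paper intends: the paper's proof consists solely of the sentence ``Proceeding as in \cref{lemma:main}'', and what you have written is precisely that unwinding --- substitute the partition-of-unity identity $\sum_{e}\lpair(v,e)=\pair(v,\face)$ into $c(\face)=\sum_{v\in\lVerts}c_v\pair(v,\face)$, exchange the two finite sums, and split by sign. Your extra remarks on the well-definedness of $\lpair$ are accurate but not needed for the argument.
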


We immediately notice that $\lcon(\lddface)\geq\#\mcl{S} c_0\geq 0$. 
Moreover, we have that $\lEdges$ is necessarily nonempty. Indeed, otherwise, for every $v\in\fverts(\face)$ we would have $\vtype(v)=(4,4,N)$, and so no vertex could be special to $\face$; this would imply $\pair(\face)= 0$, against our assumption. 
Let now $e\in\lEdges$ and $\everts(e)=\{v_1,v_2\}$. We estimate $\lcon(e)$ from below, considering, without loss of generality, the following 3 cases.

\begin{description}
\item[Case $v_1,v_2\not\in\mcl{A}$]
Then, we see that $\lcon(e)=c_{v_1}\lpair(v_1,e)+c_{v_2}\lpair(v_2,e)\great \frac{1}{2}c_6+\frac{1}{2}c_6=0.0148$.
\item[Case $v_1\in\mcl{A}$, $v_2\not\in\mcl{A}$]
If $N=41$, then $\lcon(e)=c_{v_1}\lpair(v_1,e)+c_{v_2}\lpair(v_2,e)\great c_1+\frac{1}{2}c_6=-0.0016$. If $N=40$, then $\lcon(e)=c_{v_1}\lpair(v_1,e)+c_{v_2}\lpair(v_2,e)\great c_2+\frac{1}{2}c_6=-0.001$.
\item[Case $v_1,v_2\in\mcl{A}$]
Let $\efaces(e)=(\tau,\face)$. Since $\etype(e)=(3,N)$ the face $\tau$ is a triangle, so there exists $v\in\Verts$ such that $\fverts(\tau)=\langle v_1, v_2, v\rangle$. By definition of $\mcl{A}$ we have that $\vtype(v_1)=(3,7,N)$ and $\vtype(v_2)=(3,7,N)$. Then we must have $\vtype(v)=\{3,7,7\}$ and by \cref{rule:3ab} we have that $v$ is special to $\face$. In particular, $\tau=\ltau_v$ and $e=\ledge_v$. Therefore $\lcon(e)=c_{v_1}\lpair(v_1,e)+c_{v_2}\lpair(v_2,e)+c_v\lpair(v,e)\great \min\{c_1,c_2\}+\min\{c_1,c_2\}+c_7= 0.091$.
\end{description}

From the above analysis we deduce that $\lcon_-(\face)\great 41\cdot(-0.0016)\great -0.066$.
We now rule out the following 6 cases, in which $\lddface$ plays no role. 

\begin{figure}[ht]\centering
\input{4041_4.tikz}
\quad\quad
\input{4041_5.tikz}

\input{4041_76.tikz}
\ 
\input{4041_77.tikz}
\caption{Illustrations for $\protect \abs\face\in\{40,41\}$.}
\label{fig:4041}
\end{figure}
\begin{description}
\item[Case $\exists e\in\fedges(\face)$ with  $\etype(e)=(4,N)$]
We observe that if $e_1,e_2\in\fedges(\face)$ are consecutive edges on $\face$, with $\etype(e_2)=(4,N)$, then $\etype(e_1)=(3,N)$ or $\etype(e_1)=(4,N)$. Now fix an orientation $\pi$ of the sphere. Since $\lEdges\neq\emptyset$, if we visit counterclockwise the edges in $\fedges(\face)$, we must eventually find a pair of edges $e_1,e_2\in\fedges(\face)$ clockwise consecutive on $\face$, with $\etype(e_1)=(3,N)$ and $\etype(e_2)=(4,N)$. Analogously, if we visit the edges in $\fedges(\face)$ clockwise, starting from $e_2$, we must find a pair of edges $e_3,e_4\in\fedges(\face)$ clockwise consecutive on $\face$, with $\etype(e_3)=(4,N)$ and $\etype(e_4)=(3,N)$. 
In case $e_1=e_4$ we get $\lcon_+(\face)\geq \lcon(e_1)\great c_4+c_4=0.098$. If otherwise $e_1\neq e_4$, we get $\lcon_+(\face)\geq \lcon(e_1)+\lcon(e_4)\great 2c_4+2c_1=0.080$. In either case we obtain $c(\face)\great 0.014$.
\item[Case $\exists e\in\fedges(\face)$ with $\etype(e)=(5,N)$]
Let $\everts(e)=\{v_1,v_2\}$ and let $e_1,e_2\in\fedges(\face)\setminus\{e\}$ with $e_i\in\vedges(v_i)$ for $i=1,2$. For $i=1,2$ we must have $\vtype(v_i)=(3,5,N)$ and $e_i\in\lEdges$, so $\lcon(e_i)\great c_5+c_1=0.039$. Therefore $\lcon_+(\face)\geq \lcon(e_1)+\lcon(e_2)\great 0.078$, and $c(\face)\great 0.012$.
\item[Case $\exists \langle e_1,e_2,e_3\rangle\subseteq\fedges(\face)$ with $\etype(e_1)=\etype(e_3)=(7,N)$]
This implies that $\etype(e_2)=(3,N)$ and $\everts(e_2)\subseteq\mcl{A}$, and we already saw that in this case we have $c_+(\face)\geq c(e_2)\great 0.091$ and so $c(\face)\great 0.025$.
\item[Case $\exists \langle e_1,e_2,e_3\rangle\subseteq\fedges(\face)$ with $\etype(e_1)=(7,N)$ and $\etype(e_3)=(6,N)$]
This case is similar to the above one. Let $\efaces(e_2)=\{\tau,\face\}$ and $\everts(e_2)=\{v_1,v_2\}$. We must have that $\etype(e_2)=(3,N)$, so the face $\tau$ is a triangle, hence there exist $v\in\Verts$ such that $\fverts(\tau)=\langle v_1, v_2, v\rangle$. Then $\vtype(v_1)=(3,7,N)$ and $\vtype(v_2)=(3,6,N)$, so we must have $\vtype(v)=(3,6,7)$, and by \cref{rule:3ab}  $v$ is special to $\face$. In particular, $\tau=\ltau_v$ and $e_2=\ledge_v$. Therefore $\lcon(e_2)=c_{v_1}\lpair(v_1,e_2)+c_{v_2}\lpair(v_2,e_2)+c_v\lpair(v,e_2)\great c_1+c_6+c_7\great 0.114$, hence $c(\face)\great 0.048$.
\item[Case $\exists v\in\fverts(\face)$ with $\vtype(v)=(3,3,N)$]
Take $e\in\vedges(v)$ with $e\in\lEdges$. Then $\lcon_+(\face)\geq \lcon(e)\great c_3+c_1=0.172$, hence $c(\face)\great 0.106$.
\item[Case $\#\mcl{L}\leq 12$]
The other cases being already ruled out, for every $v\in\fverts(\face)$ we have that $\vtype(v)=(3,6,N)$, $\vtype(v)=(3,7,N)$ or $\vtype(v)=(3,3,3,N)$. Hence for $N=41$ we directly check that $c(\face)\great 24 c_1 + 17 c_6 = 0.0356$, while for $N=40$ we get $c(\face)\great 24 c_2 + 16 c_6 = 0.0352$.
\end{description}

If we are in one of the cases listed above, then $c(\face)\great 0.012$, so we can suppose we are not.
We can make the following easy observation. 

\begin{remark}\label{rmk:4041}
If the above 9 cases don't apply, and if $e_1,e_2,e_3\in\fedges(\face)$ are consecutive edges on $\face$, with $\etype(e_1)=(7,N)$, then we necessarily have $\etype(e_2)=\etype(e_3)=(3,N)$. 
\end{remark}
The conclusion of \cref{rmk:4041} implies that $12\less \#\mcl L \leq \frac{N}{3}\less 14$, so $\#\mcl{L}=13$ by the above case-analysis. 
In particular, it's not difficult to deduce that, if $N=40$,  we must have $c_-(\face)\great 26 c_2$ and $c_+(\face)\great 14 c_6+\lcon(\lddface)$, while if $N=41$ we get  $c_-(\face)\great 26 c_1$ and $c_+(\face)\great 15 c_6+\lcon(\lddface)$. In either case we get $c(\face)\great \lcon(\lddface)-0.012$.

\begin{figure}[ht]\centering
\input{4041_7333.tikz}
\caption{Illustration of Lemma 12.4.}
\label{fig:4041:lemma}
\end{figure}
\begin{lemma}\label{lemma:4041:7333}
There exist $e_0,e_1,e_2,e_3\in\fedges(\face)$ consecutive edges on $\face$, with $\etype(e_0)=(7,N)$ and $\etype(e_1)=\etype(e_2)=\etype(e_3)=(3,N)$.
\end{lemma}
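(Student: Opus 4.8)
The plan is to read the four edges directly off the cyclic pattern of edge-types around $\face$. Call an edge $e\in\fedges(\face)$ a \emph{$k$-edge} when $\etype(e)=(k,N)$. First I would check that $k\in\{3,6,7\}$ for every $e\in\fedges(\face)$: the face on the non-$\face$ side of $e$ contains the two endpoints of $e$, each of which lies on $\face$ and hence (by the case-analysis preceding the lemma) has face vector in $\{(3,6,N),(3,7,N),(3,3,3,N)\}$; so that face has size in $\{3,6,7,N\}$, and size $N$ is impossible because $\fedges(\face)$ is a set (\cref{lemma:distinct}, as $N\geq 12$) and two faces of size $\geq 20$ sharing an edge coincide (\cref{lemma:big:close}). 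Next I would observe that a $6$-edge and a $7$-edge are never adjacent — their common vertex lies on $\face$ and its face vector would have to contain both $6$ and $7$, which no admissible face vector does — and, by one of the cases already ruled out just before the lemma, they are never at distance $2$ along $\fedges(\face)$ either; hence every $6$-edge is at distance $\geq 3$ from every $7$-edge.

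Then comes a short pigeonhole on gap lengths. The $\#\mcl L=13$ edges of type $7$ partition the cyclic sequence $\fedges(\face)$ into $13$ \emph{gaps}, the maximal runs of non-$7$-edges between consecutive $7$-edges. By \cref{rmk:4041} the two edges following any $7$-edge are $3$-edges, so every gap has length $\geq 2$; since the total length of the gaps is $N-13\in\{27,28\}$, they have total length $>26=2\cdot 13$. Now suppose some gap contained a $6$-edge $f$. Since $f$ is at distance $\geq 3$ from each of the two $7$-edges bounding that gap, the gap would have length $\geq 3+2=5$, and then the total gap length would be $\geq 5+12\cdot 2=29>28\geq N-13$, a contradiction. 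Hence $\fedges(\face)$ contains no $6$-edge at all, so every gap consists entirely of $3$-edges.

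Finally, since the thirteen gaps have total length $N-13\geq 27>26$, they cannot all have length $2$, so some gap has length $\geq 3$. Let $e_0$ be the $7$-edge of $\fedges(\face)$ immediately preceding such a gap, and let $e_1,e_2,e_3$ be its first three edges (these exist since the gap has length $\geq 3$). By the previous paragraph $e_1,e_2,e_3$ are $3$-edges, so $e_0,e_1,e_2,e_3$ are consecutive on $\face$ with $\etype(e_0)=(7,N)$ and $\etype(e_1)=\etype(e_2)=\etype(e_3)=(3,N)$, which is exactly the assertion.

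I do not expect a deep obstacle in this lemma: the core is the one-line counting estimate on the gap lengths. The only real care is in correctly importing the two structural inputs from the surrounding case-analysis — that every vertex of $\face$ has face vector in $\{(3,6,N),(3,7,N),(3,3,3,N)\}$, and that a $6$-edge and a $7$-edge cannot sit at distance $\leq 2$ on $\fedges(\face)$ — since the whole argument rests on exactly those two facts.
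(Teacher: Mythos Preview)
Your argument is correct and rests on the same gap-counting core as the paper's: thirteen $7$-edges cut the boundary into thirteen gaps of total length $N-13\in\{27,28\}$, each of length $\geq 2$ by \cref{rmk:4041}, so some gap has length $\geq 3$. Where you diverge is in how you certify that the first three edges of such a gap are $3$-edges. You prove the stronger fact that \emph{no} $6$-edge exists on $\partial\face$ (a gap containing one would have length $\geq 5$, forcing total gap length $\geq 29$). The paper instead never eliminates $6$-edges globally: it simply observes that the partitions of $N\in\{40,41\}$ into thirteen parts $\geq 3$ force some $n(e)\in\{4,5\}$, and then applies \cref{rmk:4041} from \emph{both} bounding $7$-edges of that one gap to cover all three (or four) intermediate edges directly. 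Your route buys a cleaner picture of the whole boundary; the paper's buys brevity, since \cref{rmk:4041} already reaches two edges deep from each end.

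One small imprecision: when you say a vertex with both $6$ and $7$ in its face vector is impossible because ``no admissible face vector does'', that is false as stated (e.g.\ $(5,6,7)$ is admissible). What you mean, and what is true, is that none of the three face vectors $\{(3,6,N),(3,7,N),(3,3,3,N)\}$ already forced on $\fverts(\face)$ contains both $6$ and $7$. Likewise, your exclusion of an $N$-edge is simpler than you make it: such an edge would put $(N,N)\subseteq\vtype(v)$ at an endpoint, again contradicting the three allowed face vectors, so the appeal to \cref{lemma:big:close} is unnecessary.
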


\begin{proof}
This lemma is easy to prove.
We fix an orientation $\pi$ of the sphere. For every $e_0\in\mcl{L}$ we can define $n(e_0)$ to be the smallest integer $n\geq 3$ such that if $e_0,e_1,\ldots,e_n\in\fedges(\face)$ are clockwise consecutive edges of $\face$, then $e_n\in\mcl{L}$.
Clearly, $\sum_{e\in\mcl{L}} n(e)=N$. Moreover, 40 can be written as a sum of 13 integers $\geq 3$ in only one way: $40=12\cdot 3 + 4$; instead, 41 can be written in two ways: $41=12\cdot 3 + 5 = 11\cdot 3+4+4$. Therefore, to complete our analysis it suffices to consider the following two cases (suppose an orientation $\pi$ of the sphere is chosen).
\begin{description}
\item[Case $\exists e\in\fedges(\face)$ with $n(e)=4$]
Let $e_0,e_1,e_2,e_3,e_4\in\fedges(\face)$ be clockwise consecutive edges on $\face$, with $e_0,e_4\in\mcl{L}$ and $e_1,e_2,e_3\not\in\mcl{L}$. 
Since $e_0,e_1,e_2$ and $e_4,e_3,e_2$ are triples of consecutive edges on $\face$, we have $\etype(e_1)=\etype(e_2)=\etype(e_3)=(3,N)$ by \cref{rmk:4041}. 
\item[Case $\exists e\in\fedges(\face)$ with $n(e)=5$]
Let $e_0,e_1,e_2,e_3,e_4,e_5\in\fedges(\face)$ be clockwise consecutive edges on $\face$, with $e_0,e_5\in\mcl{L}$ and $e_1,e_2,e_3,e_4\not\in\mcl{L}$.
Since $e_0,e_1,e_2$ and $e_5,e_4,e_3$ are triples of consecutive edges on $\face$, we have $\etype(e_1)=\etype(e_2)=\etype(e_3)=\etype(e_4)=(3,N)$ by \cref{rmk:4041}.
\end{description}
\end{proof}

Let $e_0,e_1,e_2,e_3\in\fedges(\face)$ be as in \cref{lemma:4041:7333} and let $v_1,v_2,v_3,v_4\in\Verts$ such that $\everts(e_1)=\{v_1,v_2\}$, $\everts(e_2)=\{v_2,v_3\}$ and $\everts(e_3)=\{v_3,v_4\}$. 
They are mutually distinct vertices, by \cref{lemma:distinct}.
 Let $\tau_1,\tau_2,\tau_3,\tau_4,\tau_5\in\Faces$ be triangles such that $\vfaces(v_2)=\langle \tau_3,\tau_2,\tau_1,\face\rangle_\pi$ and $\vfaces(v_3)=\langle \tau_5,\tau_4,\tau_3,\face\rangle_\pi$. 
It's not difficult to see, with \cref{lemma:distinct}, that they also must be mutually distinct. 
Let $w_1,w_2,w_3\in\Verts$ such that $\fverts(\tau_2)=\langle w_1,v_2,w_2\rangle_\pi$ and $\fverts(\tau_4)=\langle w_2,v_3,w_3\rangle_\pi$. 
Again by \cref{lemma:distinct}, $w_2$ is distinct from $w_1$ and $w_3$ 
(although we don't need it, we also have $w_1\neq w_3$, because otherwise we could deduce that $(3,3,3,3,7)\subseteq \vtype(w_1)$, which is not admissible). 
We observe that $\{3,3,3\}\subseteq\vtype(w_2)$, but $\vtype(w_2)\neq (3,3,3,a)$ for $a\geq 6$, because otherwise $\{3,3,a,7\}\subseteq \vtype(w_1)$, which is not admissible. It then suffices to consider the following 3 cases.
	\begin{description}
	\item[Case $w_2$ is a TS vertex]
	In this case $w_2$ is special to $\face$ by \cref{rule:TS}, so $\lcon(\lddface)\great c_0$.
	\item[Case $\vtype(w_2)=(3,3,3,5)$]
	In this case $w_2$ is special to $\face$ by \cref{rule:333a}, so $\lcon(\lddface)\great c_0$.
	\item[Case $\vtype(w_2)=(3,3,3,3,5)$]
	In this case $w_2$ is special to $\face$ by \cref{rule:33335}, so $\lcon(\lddface)\great c_0$.
	\end{description}
Hence we obtain $\lcon(\lddface)\great c_0=0.023$, so $c(\face)\great 0.011$.
In conclusion, summing up the analysis of all the above cases, we have shown that $c(\face)\great 0.011$.


\section{There are no faces with more than 41 edges}\label{sec:big}

In this section we push our analysis a bit further than necessary in order to prove a result of independent interest in the classification of PCC graphs.
More precisely, we prove that a PCC graph cannot have a face of size greater or equal to 42. %
We state this fact as a theorem, the proof of which occupies the whole section.

\begin{theorem}\label{thm:big}
Suppose $G$ is a PCC  graph. Then for all $\face\in\Faces$ we have $\abs{\face}\leq 41$.
\end{theorem}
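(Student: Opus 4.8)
The plan is to argue by contradiction: suppose $G$ has a face $\sigma$ with $\abs{\sigma}=N\ge 42$, and derive a contradiction from a local analysis of the first two ``coronas'' of faces surrounding $\sigma$.

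\textbf{Step 1: vertex types on the boundary of $\sigma$.} For $v\in\fverts(\sigma)$ the multiset $\vtype(v)$ is admissible and contains $N\ge 42$. Scanning \cref{table:admissible}, the only such multisets are $(3,3,N)$, $(3,4,N)$, $(3,5,N)$, $(3,6,N)$, $(4,4,N)$ and $(3,3,3,N)$: a $(3,a,b)$ vertex with $a\ge 7$ has its largest entry bounded by $41$, and the remaining rows of the table leave no room to combine an entry $\ge 7$ with $N$. In particular every $v\in\fverts(\sigma)$ has degree $3$ or $4$, and by \cref{lemma:distinct:vfaces} the face $\sigma$ appears exactly once in $\vfaces(v)$ and is the only face of size $\ge 20$ incident to $v$; this also makes $\fverts(\sigma)$ and $\fedges(\sigma)$ genuine sets by \cref{lemma:distinct}.

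\textbf{Step 2: the first corona.} Write $\fverts(\sigma)=\langle v_1,\dots,v_N\rangle$ and let $\sigma_i$ be the face across the edge $v_iv_{i+1}$; each $\sigma_i$ is a triangle or a square, so $\abs{\sigma_i}\le 4$. By \cref{lemma:two:edges} (applied with $\kappa=\sigma_i$) the face $\sigma_i$ meets $\sigma$ in exactly the edge $v_iv_{i+1}$ and shares no edge with any other face of size $\ge 20$; hence every face that shares an edge with some $\sigma_i$, but is not $\sigma$ itself, has size at most $19$. From the degree and type of $v_i$ one reads off how the $\sigma_i$ assemble at $v_i$: when $v_i$ has degree $3$ the faces $\sigma_{i-1}$ and $\sigma_i$ share the non-$\sigma$ edge at $v_i$, and when $\vtype(v_i)=(3,3,3,N)$ a single middle triangle separates $\sigma_{i-1}$ from $\sigma_i$. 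Thus $\sigma$ is surrounded by an annulus of small faces whose outer boundary is a closed walk through a layer of new ``second-layer'' vertices. Using \cref{table:admissible} once more I would restrict the face vectors of these vertices: each is incident to small faces of the corona, and by \cref{lemma:two:edges} to at most one further face of size $\ge 20$, which if present would have to be $\sigma$ itself --- impossible by planarity once $N$ is large (it would force $\abs{\sigma}\ge 2N$, or collapse $G$ to a prism/antiprism).

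\textbf{Step 3: the dichotomy, and the main obstacle.} The contradiction comes in two flavours. If the annulus is ``uniform'' --- say every $v_i$ has type $(4,4,N)$, giving a ring of squares, or every $v_i$ has type $(3,3,3,N)$, giving a ring of triangles --- then the rigidity argument inside the proof of \cref{lemma:big:close} propagates outward layer by layer and forces $G$ to be a prism or an antiprism, contradicting \cref{def:PCC}(iii). If the annulus is not uniform, then somewhere along $\partial\sigma$ a pentagonal or hexagonal $\sigma_i$ occurs (from a $(3,5,N)$- or $(3,6,N)$-vertex) or two consecutive boundary vertices have different types, and in each such local configuration one shows, exactly in the style of the proofs of \cref{lemma:big:close} and \cref{lemma:two:edges}, that some second-layer vertex is forced to be incident to two faces too large to combine, so its face vector is absent from \cref{table:admissible}. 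I would organize this case analysis the way \cref{sec:N} and \cref{sec:4041} are organized --- by transporting the curvature of the first two coronas onto the $N$ edges of $\sigma$ via an auxiliary local pairing --- so that the global claim reduces to checking a bounded list of local edge-patterns. The hard part is precisely this local analysis, and within it the irregular types $(3,5,N)$ and $(3,6,N)$: their pentagons and hexagons break the band-like regularity that makes the $(4,4,N)$ and $(3,3,3,N)$ rings tractable, and one must verify that every way of interleaving them with the other four types either re-triggers the prism/antiprism collapse of \cref{lemma:big:close} or produces an inadmissible vertex in the second corona. Keeping the analysis finite --- never having to look further than two faces from $\sigma$ --- is exactly what \cref{lemma:two:edges} secures, by forbidding a genuinely new large face in the second layer.
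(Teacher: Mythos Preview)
Your proposal has a genuine gap in Step~3. The dichotomy you set up --- uniform corona forces a prism/antiprism, non-uniform corona forces an inadmissible second-layer vertex --- does not produce a contradiction. For instance, if every $v_i$ has type $(3,6,N)$ (so the corona is an alternating ring of triangles and hexagons), the second-layer vertices have types like $(3,6,6)$, which is perfectly admissible; yet this is not a prism or an antiprism and no local obstruction appears. The same holds for the mixed patterns built from $(3,5,N)$, $(3,6,N)$, and $(3,3,3,N)$ that you flag as the hard cases: they can be locally admissible two layers deep for \emph{any} $N$. (Indeed, \cref{sec:example} constructs PCC graphs with faces of every size up to $41$ using exactly such coronas; nothing local distinguishes $41$ from $42$.) There is also a slip in Step~2: the face $\sigma_i$ across $v_iv_{i+1}$ can have size $5$ or $6$, not only $3$ or $4$.

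What the paper actually does is the curvature-transport idea you mention in passing, but promoted to the whole argument rather than used to organize a case analysis. One sums the curvatures $K(v)$ over the vertices in the first corona together with a carefully chosen subset of the second corona, and shows via a local discharging pairing (onto $\fedges(\sigma)$ and the $(4,4,N)$-vertices) that this partial sum strictly exceeds $N(\tfrac1N+\tfrac1{42})=1+\tfrac{N}{42}$. Since the total curvature of the graph is exactly $2$ by \eqref{eq:total_curvature}, this forces $N<42$. The role of \cref{lemma:big:close} and \cref{lemma:two:edges} is not to forbid large faces in the second layer outright, but to guarantee that each second-layer vertex one recruits has bounded face sizes (so its curvature is bounded below, e.g.\ at least $\tfrac1{30}$ for a $(4,4,\cdot)$-vertex), and that the recruited vertices are distinct (no double-counting). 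The contradiction is global and quantitative, not a local inadmissibility.
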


Let $\face\in \Faces$ with $\abs{\face}=N\geq 42$.
For $k\in\{1,\ld,6\}$ define $\mcl A_k$ as follows:  
\[
\begin{aligned}
\mcl{A}_1&:=\{v\in\fverts(\face):\ \vtype(v)=(3,3,3,N)\},\\  
\mcl{A}_2&:=\{v\in\fverts(\face):\ \vtype(v)=(4,4,N)\},\\
\mcl{A}_k&:=\{v\in\fverts(\face):\ \vtype(v)=(3,k,N)\},\quad\quad \quad \text{ if $3\leq k\leq 6$.}
\end{aligned}
\]
We notice that $\fverts(\face) = \bigcup_{k=1}^6 \mcl A_k$, and that all these multisets are actually sets by \cref{lemma:distinct}. 
Moreover, we have: 
$K(v)=\frac 1 N + \frac 1 6$ if $v\in\mcl A_3$; 
$K(v)=\frac 1 N + \frac 1 {12}$ if $v\in\mcl A_4$;
$K(v)=\frac 1 N + \frac 1 {30}$ if $v\in\mcl A_5$; and
$K(v)=\frac 1 N$ if $v\in\mcl A_1\cup\mcl A_2\cup\mcl A_6$.
We now define
\[
\begin{aligned}
\mcl B &:= \{e\in \fedges(\face):\ \etype(e)=(3,N)\};\\
\mcl T &:= \{\tau\in \Faces:\ \fverts(\tau)=\{v,v_1,v_2\} \text{ with } v_1,v_2\in\mcl A_1\cup\mcl A_5\cup\mcl A_6\};\\
\mcl C_1 &:= \{v\in \Verts\setminus \fverts(\face):\ \exists\tau\in\mcl T\text{ with } v\in \fverts(\tau)\};\\
\mcl C_2 &:= \{v\in \Verts\setminus \fverts(\face):\ \exists v'\in\mcl A_2\text{ with } vv'\in\Edges\}.
\end{aligned}
\]

Notice that if $\tau\in \mcl T$, $\fverts(\tau)=\{v,v_1,v_2\}$ and $v_1,v_2\in\fverts(\face)$ then necessarily $v_1v_2\in\fedges(\face)$ and $v\not\in \fverts(\face)$, by \cref{lemma:big:close} and \cref{lemma:two:edges}.

\begin{figure}[ht]\centering
\input{big_c1_3.tikz}
\ 
\input{big_c1_5.tikz}
\ 
\input{big_c1_6.tikz}
\caption{Illustrations for Lemma 13.2.}
\label{fig:big:c1}
\end{figure}

\begin{lemma}\label{big:c1}
For every $v\in\mcl C_1$ there is a unique $\tau_v\in\mcl T$ with $v\in\fverts(\tau_v)$.
Conversely, for all $\tau\in\mcl T$ there exists exactly one $v\in\mcl C_1$ with $v\in\fverts(\tau)$.
\end{lemma}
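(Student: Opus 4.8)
The statement has two halves, which I would prove in turn. For the first half, let $v\in\mcl C_1$. By definition there is at least one $\tau\in\mcl T$ with $v\in\fverts(\tau)$; I must show it is unique. Suppose $\tau,\tau'\in\mcl T$ are distinct with $v\in\fverts(\tau)\cap\fverts(\tau')$. Writing $\fverts(\tau)=\{v,v_1,v_2\}$ and $\fverts(\tau')=\{v,v_1',v_2'\}$ with $v_1,v_2,v_1',v_2'\in\mcl A_1\cup\mcl A_5\cup\mcl A_6\subseteq\fverts(\face)$ (and each of $v_1v_2$, $v_1'v_2'$ an edge of $\face$ by the remark just before the lemma), I get that $v$ is adjacent to at least three vertices of $\fverts(\face)$ unless some coincidences occur; in any case $\vfaces(v)$ must contain $\tau,\tau'$ and $\face$ (since $v\notin\fverts(\face)$, the face $\face$ appears in $\vfaces(v)$ only if $v$ lies on two edges of $\face$, which it does via $v_1$ and $v_1'$ being on $\face$ — I should track this carefully). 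The key point is that $\face$ is enormous, $N\geq 42$, while $v$ has degree at most $5$, so $\vtype(v)$ can contain $N$ at most once; and examining the admissible face vectors in \cref{table:admissible} that contain $N$ together with two triangles — namely $(3,3,3,N)$ — forces the neighbourhood of $v$ to look like one of the configurations in \cref{fig:big:c1}, and in each case I reach a contradiction with \cref{lemma:big:close} or \cref{lemma:two:edges}, or with the non-admissibility of some $\vtype$. I expect the cleanest route is: both $\tau$ and $\tau'$ being in $\mcl T$ forces $v$ to be adjacent (along triangle edges) to vertices of $\face$ on two ``sides'', hence $\face\in\vfaces(v)$, and then $\vtype(v)$ must be one of the vectors with $N$ in it from the table while also containing enough $3$'s; a short case check on $|\kappa|$ where $\kappa$ is the other non-triangular face at $v$ (cases $|\kappa|\in\{3,5,6\}$, matching the three pictures in \cref{fig:big:c1}) rules everything out via \cref{lemma:two:edges} applied to $\face$ and $\kappa$.

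For the converse, fix $\tau\in\mcl T$ with $\fverts(\tau)=\{v,v_1,v_2\}$, $v_1,v_2\in\mcl A_1\cup\mcl A_5\cup\mcl A_6$. By the remark preceding the lemma, $v_1v_2\in\fedges(\face)$ and $v\notin\fverts(\face)$, so $v\in\Verts\setminus\fverts(\face)$ lies on $\tau\in\mcl T$, hence $v\in\mcl C_1$; this gives existence of a vertex of $\mcl C_1$ on $\tau$. For uniqueness within $\tau$: $\tau$ has exactly three vertices, namely $v,v_1,v_2$, and $v_1,v_2\in\fverts(\face)$ are excluded from $\mcl C_1$ by definition ($\mcl C_1\subseteq\Verts\setminus\fverts(\face)$), so $v$ is the only candidate. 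That half is essentially immediate once the remark is invoked.

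\textbf{Main obstacle.} The delicate part is the uniqueness in the first half — ruling out that a single vertex $v\notin\fverts(\face)$ could sit on two different triangles of $\mcl T$. The difficulty is purely combinatorial bookkeeping: one must argue that two such triangles force $\face$ into $\vfaces(v)$ and then that $v$ would need a face vector containing $N$ alongside several triangles, and finally eliminate the surviving arithmetic possibility $(3,3,3,N)$ by a local geometric argument. I would handle this by fixing an orientation $\pi$, writing $\vfaces(v)$ in cyclic order, locating $\tau$ and $\tau'$ and the big face $\face$ inside it, and then invoking \cref{lemma:two:edges} (with $\kappa$ the remaining face at $v$, of size $\leq 6$) or \cref{lemma:big:close} to derive the contradiction — exactly as suggested by the three sub-pictures in \cref{fig:big:c1}. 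I do not expect any step to require a long computation; the whole lemma should follow from the admissibility table together with the two ``large faces cannot be too close'' lemmas already established.
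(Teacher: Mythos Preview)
Your plan for the converse half is correct and matches the paper's one-line argument.

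For the first half, however, there is a genuine error in your reasoning. You claim that having two triangles $\tau,\tau'\in\mcl T$ through $v$ forces $\face\in\vfaces(v)$, and you then try to pin down $\vtype(v)$ as something like $(3,3,3,N)$. This is false: by definition $v\in\mcl C_1\subseteq\Verts\setminus\fverts(\face)$, so $\face$ is \emph{not} in $\vfaces(v)$ and $N$ does \emph{not} appear in $\vtype(v)$. Being adjacent to vertices of $\face$ is not the same as lying on an edge of $\face$.

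The paper's argument runs differently. First it rules out that $\tau_1,\tau_2$ share a second common vertex $w\neq v$ (else $(3,3,N)\subseteq\vtype(w)$, forcing $w\in\mcl A_3$, contrary to $\mcl T$'s definition). Then, since $\deg(v)\leq 5$, there is a face $\kappa$ with $\langle\tau_1,\kappa,\tau_2\rangle\subseteq\vfaces(v)$. The size bound $|\kappa|\in\{3,5,6\}$ comes not from $\vtype(v)$ but from $\vtype(v_1)$, where $v_1$ is the vertex shared by $\tau_1$ and $\kappa$: since $v_1\in\mcl A_1\cup\mcl A_5\cup\mcl A_6$, we have $\vtype(v_1)\in\{(3,3,3,N),(3,5,N),(3,6,N)\}$, and $\kappa\neq\face$ (as $v\in\fverts(\kappa)\setminus\fverts(\face)$), so $|\kappa|\in\{3,5,6\}$. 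For $|\kappa|\in\{5,6\}$ one checks that $\kappa$ itself acquires two distinct edges on $\face$, contradicting \cref{lemma:two:edges}. For $|\kappa|=3$ one must pass to $\oppf(v,\kappa)$ --- it is \emph{that} triangle, not $\kappa$, which shares two edges with $\face$; applying \cref{lemma:two:edges} directly to $\kappa$ does not work here.

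So your endgame (case split on $|\kappa|$, invoke \cref{lemma:two:edges}) is the right one, but your route to the case split goes through a false assertion about $\vfaces(v)$, and your handling of the $|\kappa|=3$ case is too coarse.
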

\begin{proof}
Let $v\in\mcl C_1$ and suppose there are $\tau_1,\tau_2\in\mcl T$ with $\tau_1\neq\tau_2$ and $v\in\fverts(\tau_1)\cap\fverts(\tau_2)$. 
We cannot have another $w\in\fverts(\tau_1)\cap\fverts(\tau_2)$, $w\neq v$, because otherwise we get $f(w)=(3,3,N)$, contrary to the definition of $\mcl T$. 
Since moreover $\vfaces(v)$ has at most 5 elements, we deduce that there is some $\kappa\in\vfaces(v)$ and some orientation of the sphere $\pi$ with $\langle \tau_1,\kappa,\tau_2\rangle_\pi\subseteq \vfaces(v)$, i.e. $\tau_1,\kappa,\tau_2$ cyclically consecutive around $v$.
Let $\fverts(\tau_1)=\langle v,v_1,v_2\rangle_\pi$ and $\fverts(\tau_2)=\langle v,v'_1,v'_2\rangle_\pi$
Since $v_1\in\mcl A_1\cup\mcl A_5\cup\mcl A_6$ and $v_1\in\fverts(\kappa)$, we have the following 2 cases.
\begin{description}
\item[Case $\abs{\kappa}=3$]
Then $\oppf(v,\kappa)$ must be triangle that shares two edges with $\face$, absurd by \cref{lemma:two:edges}.
\item[Case $\abs{\kappa}\in\{5,6\}$]
Then $\kappa$ shares two edges with $\face$, absurd by \cref{lemma:two:edges}.
\end{description}
So the first assertion is proved.  
The second one is a consequence of the remarks preceding this lemma.
\end{proof}

If we write $\fverts(\tau_v)=\{v,v_1,v_2\}$ for $v\in\mcl C_1$, we define $e_v:=v_1v_2\in\mcl B$.

\begin{lemma}\label{big:c2}
For every $v\in\mcl C_2$ there is a unique $v'\in\mcl A_2$ with $vv'\in\Edges$.
Conversely, for every $v\in\mcl A_2$ there is a unique $v'\in\mcl C_2$ with $vv'\in\Edges$.
\end{lemma}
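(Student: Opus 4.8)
The plan is to read off the local structure at a vertex of $\mcl A_2$ and at its neighbour off $\face$, and to prove the two directions separately, using \cref{lemma:big:close} and \cref{lemma:two:edges} to control the faces of size $N$.

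\emph{The converse direction.} I would start from $v\in\mcl A_2$, so $\vtype(v)=(4,4,N)$ with $N\geq 42$. Since $N>11$, \cref{lemma:distinct:vfaces} forces the face of size $N$ in $\vfaces(v)$ to be $\face$ itself and to occur with multiplicity one, so the faces around $v$ are $\langle \kappa_1,\kappa_2,\face\rangle$ with $\abs{\kappa_1}=\abs{\kappa_2}=4$ and $\kappa_1\neq\kappa_2$. Hence $v$ has exactly three edges: two of them lie on $\face$ (separating $\face$ from one of the squares), while the third, say $e=vv'$, separates $\kappa_1$ from $\kappa_2$, so $\etype(e)=(4,4)$ and $e\notin\fedges(\face)$. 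If $v'\in\fverts(\face)$, then $\face\in\vfaces(v)\cap\vfaces(v')$ with $\abs{\face}\geq 20$ and $vv'\in\Edges$, so \cref{lemma:big:close} would force $vv'\in\fedges(\face)$, a contradiction; therefore $v'\notin\fverts(\face)$, i.e. $v'\in\mcl C_2$. Uniqueness is immediate: the other two neighbours of $v$ lie on $\face$, hence are not in $\mcl C_2\subseteq\Verts\setminus\fverts(\face)$.

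\emph{The forward direction.} Here I would take $v\in\mcl C_2$; existence of an adjacent vertex in $\mcl A_2$ is the definition of $\mcl C_2$, so only uniqueness needs an argument. Suppose $v_1',v_2'\in\mcl A_2$ are distinct and both adjacent to $v$. By the structural description above, each edge $vv_i'$ is the unique edge of $v_i'$ that does not lie on $\face$ and satisfies $\etype(vv_i')=(4,4)$. I expect the main obstacle to be the case where $vv_1'$ and $vv_2'$ bound a common face $\kappa$ of $\vfaces(v)$: then $\kappa$ is a square (it touches the edge $vv_1'$, which has squares on both sides), and by \cref{lemma:distinct} its boundary is a $4$-cycle $\langle v_1',v,v_2',w\rangle$; tracing which edges of the squares at $v_1'$ and at $v_2'$ lie on $\face$ shows that both $v_1'w$ and $v_2'w$ lie on $\face$, so $\kappa$ and $\face$ share two edges, and \cref{lemma:two:edges} (applicable since $\abs{\kappa}=4\leq 6$ and $\abs{\face}=N\geq 20$) forces $v_1'w=v_2'w$, whence $v_1'=v_2'$, a contradiction. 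In the remaining case $vv_1'$ and $vv_2'$ do not bound a common face, so $\deg(v)\geq 4$ and $v$ is incident to at least four distinct square faces (two on each side of $vv_1'$ and of $vv_2'$, and these four are distinct by \cref{lemma:distinct:vfaces}); thus $\vtype(v)$ is $(4,4,4,4)$ or $(4,4,4,4,x)$, and a one‑line computation gives $K(v)\leq 0$, contradicting the positivity of the combinatorial curvature in \cref{def:PCC}. Hence $v_1'=v_2'$, which finishes the proof.

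Apart from the first configuration above, everything reduces to the degree bound and to inspection of admissible face vectors; the only delicate point is checking carefully that the two remaining sides of the square $\kappa$ at $v_1'$ and $v_2'$ both land on $\face$, which is exactly what makes \cref{lemma:two:edges} bite.
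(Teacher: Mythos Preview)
Your proof is correct and follows the same line as the paper's: the converse direction via \cref{lemma:big:close} is identical, and the forward direction hinges on the same contradiction with \cref{lemma:two:edges} once a common square $\kappa$ containing both $vv_1'$ and $vv_2'$ is found. The only cosmetic difference is that the paper bypasses your case split by reading off from \cref{table:admissible} that $\etype(vv_1')=\etype(vv_2')=(4,4)$ forces $\vtype(v)\in\{(4,4,4),(3,4,4,4)\}$, in both of which any two $(4,4)$-edges already share a square face; your second case (four squares around $v$, hence $K(v)\leq 0$) is thus vacuous, but your handling of it is fine.
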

\begin{proof}
Suppose there are $v_1,v_2\in\mcl A_2$ with $v_1\neq v_2$ and $vv_1,vv_2\in\Edges$. 
Then $\etype(vv_1)=\etype(vv_2)=(4,4)$ and so $\vtype(v)\in\{(4,4,4),(3,4,4,4)\}$.
In any case there exists $\kappa\in\efaces(vv_1)\cap\efaces(vv_2)$. 
We have that $\abs{\kappa}=4$ and we notice that $\kappa$ shares with $\face$ the two edges distinct from $vv_1$ and $vv_2$.
This contradicts \cref{lemma:two:edges}, so the first assertion is proved.
For the converse, notice that  for every $v'\in\mcl A_2$ there is exactly one $v\in\Verts$ such that $vv'\in\Edges\setminus\fedges(\face)$. By \cref{lemma:big:close} we have that $v\not\in\fverts(\face)$, and so $v\in \mcl C_2$.
Such $v$ is unique because $v\not\in\fverts(\face)$ and $vv'\in\Edges$ imply $vv'\in\Edges\setminus\fedges(\face)$.
\end{proof}

Consider also
\[
\begin{aligned}
\mcl C_{15}&:=\{v\in\mcl C_1:\ \fverts(\tau_v)=\{v,v_1,v_2\}\text{ with }v_1\in\mcl A_1\text{ and }v_2\in\mcl A_5\};\\
\mcl C_{16}&:=\{v\in\mcl C_1:\ \fverts(\tau_v)=\{v,v_1,v_2\}\text{ with }v_1\in\mcl A_1\text{ and }v_2\in\mcl A_6\};\\
\mcl D &:= \{v\in\Verts:\ \exists v'\in\mcl C_{16}\text{ with } vv'\in\Edges\text{ and } \etype(vv')=(5,6)\}.
\end{aligned}
\]

Notice that $(4,4)\subseteq f(v)$ for all $v\in\mcl C_2$ and $(5,6)\subseteq f(v)$ for all $v$ in $\mcl D$. 
We infer from this that $\mcl D$ is disjoint from $\mcl C_2$ and from $\fverts(\face)$. 
We could prove that it is disjoint from $\mcl C_1$ as well, but it's shorter to show only the following.

\begin{figure}[ht]\centering
\input{big_cd.tikz}
\caption{Illustration for Lemma 13.4.}
\label{fig:big:cd}
\end{figure}
\begin{lemma}\label{big:cd}
If $v\in \mcl C_1\cap\mcl D$, then $v\in\mcl C_{15}$.
\end{lemma}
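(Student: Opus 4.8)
The plan is to pin down $\vtype(v)$ from the two hypotheses and then read off the triangle $\tau_v$ from the local picture forced around the edge $vv'$ that witnesses $v\in\mcl D$. Since $v\in\mcl C_1$, \cref{big:c1} provides the unique triangle $\tau_v\in\mcl T$ with $v\in\fverts(\tau_v)$, so $3\in\vtype(v)$ and $v\notin\fverts(\face)$. Since $v\in\mcl D$, there is some $v'\in\mcl C_{16}$ with $vv'\in\Edges$ and $\etype(vv')=(5,6)$; the two faces incident to $vv'$ are then a pentagon $p$ and a hexagon $q$ through $v$. Hence $\{3,5,6\}\subseteq\vtype(v)$, so by \cref{table:admissible} either $\vtype(v)=(3,5,6)$ or $\vtype(v)=(3,3,5,6)$; in particular $p$ and $q$ are the unique pentagon and hexagon at $v$, and they are consecutive in $\vfaces(v)$. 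The same reasoning applies verbatim to $v'$ (it lies in the triangle $\tau_{v'}$ and on the faces of $vv'$), so $v'$ has a unique hexagon, which must again be $q$.

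Next I would describe $q$. Write $\fverts(\tau_{v'})=\{v',v'_1,v'_2\}$ with $v'_1\in\mcl A_1$ and $v'_2\in\mcl A_6$, as in the definition of $\mcl C_{16}$. The vertex $v'_2$ has type $(3,6,N)$, and its unique triangle is $\tau_{v'}$; moreover $v'_1v'_2\in\fedges(\face)$ by the remark preceding \cref{big:c1}. Therefore the edge $v'v'_2$ borders $\tau_{v'}$ and the unique hexagon at $v'_2$, which is forced to be $q$, and the remaining edge at $v'_2$, say $v'_2u$, borders $q$ and $\face$. Consequently $q$ — an honest hexagon since $\abs{q}=6$, by \cref{lemma:distinct} — contains the path $u,v'_2,v',v$; writing $z$ for the $q$-neighbour of $v$ other than $v'$, we get $q=\langle u,v'_2,v',v,z,t\rangle$, and in particular $v'_2u$ and $zt$ are distinct edges of $q$.

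The key step, and the one I expect to be the crux, is to show that the triangle of $v$ lying next to $q$ in $\vfaces(v)$ is not $\tau_v$. Call that triangle $T$ (so $vz\in\fedges(T)$). If $T=\tau_v$, then $z$ is a non-$v$ vertex of $\tau_v\in\mcl T$, hence $z\in\mcl A_1\cup\mcl A_5\cup\mcl A_6$; since $z\in\fverts(q)$ we have $6\in\vtype(z)$, which forces $z\in\mcl A_6$, of type $(3,6,N)$. Then the faces at $z$ are $\tau_v$, $q$ and $\face$, and the $q$-edge $zt$ at $z$ (the one other than $vz$) must border $q$ and $\face$. Thus $q$ shares the two distinct edges $v'_2u$ and $zt$ with $\face$, contradicting \cref{lemma:two:edges} (take $\kappa=q$ and $\face$ the $N$-gon, with $N\geq 42$). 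This in particular rules out $\vtype(v)=(3,5,6)$, where $T$ is the only triangle at $v$ and hence equals $\tau_v$; so $\vtype(v)=(3,3,5,6)$ and $T\neq\tau_v$.

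Finally I would conclude. The four faces at $v$ now occur in cyclic order $\langle p,q,T,T'\rangle$, and $\tau_v=T'$, with $\fverts(\tau_v)=\{v,w_*,w\}$ where $vw$ is the edge of $v$ between $T'$ and $p$ and $vw_*$ the edge between $T$ and $T'$. Since $\tau_v\in\mcl T$, both $w$ and $w_*$ lie in $\mcl A_1\cup\mcl A_5\cup\mcl A_6$. Now $w\in\fverts(p)$ gives $5\in\vtype(w)$, so $w$ has type $(3,5,N)$, i.e.\ $w\in\mcl A_5$; and $w_*$ lies in the two distinct triangles $T$ and $T'$, so $\vtype(w_*)$ contains the value $3$ with multiplicity at least two, which rules out $(3,5,N)$ and $(3,6,N)$ and leaves $w_*\in\mcl A_1$. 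Hence $\fverts(\tau_v)=\{v,v_1,v_2\}$ with $v_1\in\mcl A_1$ and $v_2\in\mcl A_5$, i.e.\ $v\in\mcl C_{15}$, as required.
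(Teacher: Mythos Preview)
Your proof is correct and follows essentially the same route as the paper's: both pin down $\vtype(v)\in\{(3,5,6),(3,3,5,6)\}$, then rule out the triangle of $v$ adjacent to the hexagon being $\tau_v$ by showing this would force the hexagon to share two distinct edges with $\face$, contradicting \cref{lemma:two:edges}; finally both read off $\mcl A_5$ and $\mcl A_1$ for the two boundary vertices of $\tau_v$ from the pentagon and the second triangle respectively. Your labeling and the explicit treatment of $v'$ differ cosmetically from the paper's walk around the hexagon, but the argument is the same.
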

\begin{proof}
If  $v\in \mcl C_1\cap\mcl D$ we have $(3,5,6)\subseteq \vtype(v)$, and so $\vtype(v)\in\{(3,5,6),(3,3,5,6)\}$.
Let $\face_1,\face_2\in\vfaces(v)$ with $\abs{\face_1}=6$ and $\abs{\face_2}=5$,  and let $\fverts(\face_1)=\langle v,v_1,v_2,v_3,v_4,v_5\rangle$ so that $\fedges(vv_1)=\{\face_1,\face_2\}$.
By the definition of $\mcl D$, we have that $v_1\in\mcl C_{16}$.
This easily implies that $v_2\in\mcl A_6$, and so necessarily $\etype(v_2v_3)=(6,N)$.
Let now $\tau_v\in\vfaces(v)$ as in \cref{big:c1}.
We notice that we cannot have $v_5\in\fverts(\tau_v)$. 
Indeed, otherwise we would have $v_5\in\mcl A_6$ and so $\etype(v_4v_5)=(6,N)$. 
But this is in contradiction with \cref{lemma:two:edges}.
Therefore we must have $\vfaces(v)=\langle \tau_v,\tau',\face_1,\face_2\rangle$ for some $\tau'\in\Faces$ with $\abs{\tau'}=3$.
Let $\fverts(\tau_v)=\{v,w_1w_2\}$ with $\etype(vw_1)=(3,5)$.
Then $\{\tau_v,\face_1\}\subseteq \vfaces(w_1)$ and $\{\tau_v,\tau'\}\subseteq \vfaces(w_2)$. 
Since $\tau_v\in\mcl T$, we must have $w_1\in\mcl A_5$ and $w_2\in\mcl A_1$, so $v\in\mcl C_{15}$.
\end{proof}


Our strategy to prove that $\face$ cannot exist, is to show that it forces the total curvature of the graph to exceed 2 (see \eqref{eq:total_curvature}).
In order to estimate, with sufficient precision, this global quantity from easier local computations, we employ once more (a weighted version of) our improved  discharging technique. Let
\[
\begin{aligned}
\Vbig&:=\fverts(\face)\cup\mcl C_1\cup\mcl C_2\cup\mcl D;\\
\FFbig&:=\mcl A_2\cup\mcl B.
\end{aligned}
\]

Now we define the pairing $\pbig:\Vbig\times\FFbig\to\Q_{\geq 0}$ as follows. Let $v\in\Vbig$.
\begin{description}
\item[Case $v\in\mcl A_1\cup\mcl A_3$]
Then there are exactly two elements $e_1,e_2\in\mcl B$ with $v\in\everts(e_1)\cap\everts(e_2)$.
We define $\pbig(v,e_1)=\pbig(v,e_2)=\frac 1 2 $ and  $\pbig(v,x)=0$ for all $x\in\FFbig\setminus\{e_1,e_2\}$.
\item[Case $v\in\mcl A_4\cup\mcl A_5\cup\mcl A_6$]
Then there is exactly one element $e\in\mcl B$ with $v\in\everts(e)$.
We define $\pbig(v,e)=1$ and  $\pbig(v,x)=0$ for all $x\in\FFbig\setminus\{e\}$.
\item[Case $v\in\mcl A_2$]
Then $v\in\FFbig$.
We define $\pbig(v,v)=1$ and  $\pbig(v,x)=0$ for all $x\in\FFbig\setminus\{v\}$.
\item[Case $v\in\mcl C_2$]
Then by \cref{big:c2} there is exactly one element $v'\in\mcl A_2$ with $vv'\in\Edges$.
We define $\pbig(v,v')=1$ and  $\pbig(v,x)=0$ for all $x\in\FFbig\setminus\{v'\}$.
\item[Case $v\in\mcl C_1\setminus\mcl D$]
Then, by the definition following \cref{big:c1}, we have  $e_v\in\mcl B$.
We define $\pbig(v,e_v)=1$ and  $\pbig(v,x)=0$ for all $x\in\FFbig\setminus\{e_v\}$.
\item[Case $v\in\mcl C_1\cap\mcl D$]
Then by \cref{big:cd} we have $v\in\mcl C_{15}$ and so necessarily $f(v)=(3,3,5,6)$. 
Hence there is exactly one $v'\in\Verts$ with $vv'\in\Edges$ and $\vtype(vv')=(5,6)$.
It follows from the definition of $\mcl D$ that $v'\in\mcl C_{16}\subseteq\mcl C_1$.
Therefore we have $e_{v},e_{v'}\in\mcl B$, and $e_{v}\neq e_{v'}$ because $v\neq v'$.
We define $\pbig(v,e_{v}) = \pbig(v,e_{v'})=\frac 1 2$ and  $\pbig(v,x)=0$ for all $x\in\FFbig\setminus\{e_v,e_{v'}\}$.
\item[Case $v\in\mcl D\setminus\mcl C_1$]
Let $\Dbig(v):=\{v'\in\mcl C_{16}\text{ with } vv'\in\Edges\text{ and } \etype(vv')=(5,6)\}$, and put $\nDbig_v:=\#\Dbig(v)$.
Notice that $\nDbig_v\leq 2$, where the equality might hold a priori in case $f(v)=(5,5,6)$ or $f(v)=(5,6,6)$.
Moreover for every $v'\in\Dbig(v)$ we have $e_{v'}\in\mcl B$.
We define $\pbig(v,e_{v'})=\frac 1 {\nDbig_v}$ for every $v'\in\Dbig(v)$ and  $\pbig(v,x)=0$ for all other elements $x\in\FFbig$.
\end{description}

Notice that for every  $v\in \Vbig$ we have $\sum_{x\in\FFbig} \pbig(v,x)=1$.
For every $x\in\FFbig$ we define
\[
\begin{aligned}
\cbig_x&:=\sum_{v\in\Vbig} \pbig(v,x) K(v);\\
\wbig_x&:=\sum_{v\in\fverts(\face)} \pbig(v,x),
\end{aligned}
\]
so that 
\begin{equation}\label{eq:big:sum}
\begin{aligned}
 \sum_{x\in\FFbig} \cbig_x &= \sum_{x\in\FFbig} \sum_{v\in\Vbig} \pbig(v,x)K(v) = \sum_{v\in\Vbig} K(v);\\
 \sum_{x\in\FFbig} \wbig_x &= \sum_{x\in\FFbig}\sum_{v\in\fverts(\face)} \pbig(x,v) = \sum_{v\in\fverts(\face)} 1 = N.
\end{aligned}
\end{equation}
Notice also that for every $x\in\FFbig$ all summands in the definition of $\cbig_x$ are nonnegative.
Therefore any partial sum of them will give an estimate of $\cbig$ from below. 

\begin{lemma}\label{big:main}
For every $x\in \FFbig$ we have $\cbig_x\great \wbig_x\left(\frac 1 N + \frac 1 {42}\right)$.
\end{lemma}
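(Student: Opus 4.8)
The plan is to treat the two kinds of elements of $\FFbig=\mcl A_2\cup\mcl B$ separately, reducing each time to a positivity statement about a ``surplus'' controlled by a short case analysis.

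First I would dispose of $x=v'\in\mcl A_2$. Inspecting the definition of $\pbig$, among the vertices of $\fverts(\face)$ only $v'$ pairs with $v'$, so $\wbig_{v'}=1$; and among all vertices of $\Vbig$ the ones pairing with $v'$ are $v'$ and the unique $v\in\mcl C_2$ with $vv'\in\Edges$ furnished by \cref{big:c2}, so $\cbig_{v'}=K(v')+K(v)=\frac1N+K(v)$. The edge $vv'$ has $\etype(vv')=(4,4)$ (it cannot lie on $\face$, since $v\notin\fverts(\face)$), so by \cref{lemma:big:close} the vertex $v$ has no face of size $\geq20$; as $(4,4)\subseteq\vtype(v)$, the table of admissible vertices forces $\vtype(v)\in\{(4,4,a):4\leq a\leq 19\}\cup\{(3,4,4,4),(3,4,4,5)\}$, whence $K(v)\geq\tfrac1{30}\great\tfrac1{42}$ and $\cbig_{v'}\great\frac1N+\frac1{42}=\wbig_{v'}\bigl(\tfrac1N+\tfrac1{42}\bigr)$.

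Now let $x=e\in\mcl B$, with $\everts(e)=\{v_1,v_2\}$, and let $\tau$ be the triangle with $\efaces(e)=\{\tau,\face\}$ and $\fverts(\tau)=\{v_1,v_2,w\}$. Inspecting $\pbig$, the only elements of $\fverts(\face)$ pairing with $e$ are $v_1,v_2$, so $\wbig_e=\pbig(v_1,e)+\pbig(v_2,e)$; every vertex of $\Vbig\setminus\fverts(\face)$ contributes a nonnegative amount to $\cbig_e$, and the only such contribution I will need is that of $w$, which is nonzero precisely when $\tau\in\mcl T$ (equivalently $w\in\mcl C_1$), in which case $\pbig(w,e)\in\{\tfrac12,1\}$ and $\pbig(w,e)=\tfrac12$ only if $w\in\mcl C_1\cap\mcl D$. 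Writing $K(v_i)=\frac1N+\delta(v_i)$ with $\delta(v_i)=\frac16,\frac1{12},\frac1{30},0$ according as $v_i\in\mcl A_3,\mcl A_4,\mcl A_5,\mcl A_1\cup\mcl A_6$, the inequality $\cbig_e\great\wbig_e(\frac1N+\frac1{42})$ is equivalent to
\[
\sum_{i=1}^{2}\pbig(v_i,e)\Bigl(\delta(v_i)-\tfrac1{42}\Bigr)\;+\;\pbig(w,e)\,K(w)\,\cdot\,\mathbf 1_{\{\tau\in\mcl T\}}\;\great\;0 ,
\]
where $\pbig(v_i,e)=\tfrac12$ if $v_i\in\mcl A_1\cup\mcl A_3$ and $\pbig(v_i,e)=1$ if $v_i\in\mcl A_4\cup\mcl A_5\cup\mcl A_6$. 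A direct computation gives the endpoint ``surplus'' $\pbig(v_i,e)(\delta(v_i)-\tfrac1{42})$, namely $\tfrac1{14},\tfrac5{84},\tfrac1{105},-\tfrac1{84},-\tfrac1{42}$ for $v_i$ in $\mcl A_3,\mcl A_4,\mcl A_5,\mcl A_1,\mcl A_6$. Hence if one of $v_1,v_2$ lies in $\mcl A_3\cup\mcl A_4$ the two surpluses already sum to at least $\tfrac5{84}-\tfrac1{42}\great0$, and if both lie in $\mcl A_5$ they sum to $\tfrac2{105}\great0$; in these cases the term in $w$ is not even needed.

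It remains to handle $v_1,v_2\in\mcl A_1\cup\mcl A_5\cup\mcl A_6$, where $\tau\in\mcl T$ and $w\in\mcl C_1$; here I would exploit the local picture at $w$. Since $w$ is adjacent to $v_1,v_2\in\fverts(\face)$ but $w\notin\fverts(\face)$, \cref{lemma:big:close} shows $w$ has no face of size $\geq20$; and for each $i$ the face of $\vfaces(v_i)$ distinct from $\face$ and $\tau$ is, together with $\tau$, a pair of consecutive faces of $\vfaces(w)$ — a hexagon if $v_i\in\mcl A_6$, a pentagon if $v_i\in\mcl A_5$, a triangle if $v_i\in\mcl A_1$ — so that $\vtype(w)$ contains a prescribed triple of consecutive faces. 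Using the admissible table, \cref{lemma:distinct:vfaces} (to rule out a face of size $6$ occurring twice around $w$) and \cref{big:cd} (which forces $\vtype(w)=(3,3,5,6)$ whenever $w\in\mcl C_1\cap\mcl D$, i.e.\ whenever $\pbig(w,e)=\tfrac12$), one pins $\vtype(w)$ down to a short explicit list in each sub-case and reads off a lower bound for $\pbig(w,e)K(w)$: e.g.\ $\vtype(w)=(3,6,6)$ giving $\tfrac16$ when $v_1,v_2\in\mcl A_6$; $(3,3,3)\subseteq\vtype(w)$ with all entries $\leq19$ giving $\geq\tfrac1{19}$ when $v_1,v_2\in\mcl A_1$; $\vtype(w)\in\{(3,5,6),(3,3,5,6)\}$ giving $\geq\tfrac1{60}$ when $\mcl A_5$ meets $\mcl A_6$; and $\vtype(w)\in\{(3,3,5),(3,3,5,5),(3,3,5,6),(3,3,5,7)\}$ giving $\geq\tfrac1{105}$ when $\mcl A_5$ meets $\mcl A_1$. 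In each instance the total of the two endpoint surpluses and $\pbig(w,e)K(w)$ is strictly positive, the numerically tightest case being $v_1\in\mcl A_5$, $v_2\in\mcl A_6$, where $\tfrac1{105}-\tfrac1{42}+\tfrac1{60}=\tfrac1{420}\great0$. I expect this last step — the constraint analysis of $\vtype(w)$, and especially tracking when $w\in\mcl D$ halves $\pbig(w,e)$ — to be the only delicate part; everything else is bookkeeping with the pairing rules and the admissible table.
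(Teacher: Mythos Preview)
Your overall strategy matches the paper's, and your surplus reformulation is correct. However, you have omitted the case $v_1\in\mcl A_1$, $v_2\in\mcl A_6$ entirely from your explicit list, and this is precisely the case where your stated method fails. Here $\langle 3,3,6\rangle\subseteq\vtype(w)$, so $\vtype(w)\in\{(3,3,6),(3,3,3,6),(3,3,4,6),(3,3,5,6)\}$; when $\vtype(w)=(3,3,5,6)$ you have $w\in\mcl C_{16}$, hence $w\notin\mcl C_{15}$, and by \cref{big:cd} $w\notin\mcl D$, so $\pbig(w,e)=1$ and the $w$-contribution is exactly $K(w)=\tfrac1{30}$. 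But the two endpoint surpluses sum to $-\tfrac1{84}-\tfrac1{42}=-\tfrac1{28}$, and $-\tfrac1{28}+\tfrac1{30}=-\tfrac1{420}<0$. So using only the contribution of $w$, which is all you claim to need, the inequality fails.

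This is exactly why the paper introduced the set $\mcl D$ into $\Vbig$ and $\pbig$: in this sub-case $w\in\mcl C_{16}$ has a neighbour $u$ along its $(5,6)$-edge, so $u\in\mcl D$ by definition, and one checks (splitting on $u\in\mcl C_1$ versus $u\notin\mcl C_1$, and on $\nDbig_u\in\{1,2\}$) that $\pbig(u,e)K(u)\geq\tfrac1{210}$; adding this extra $\tfrac1{210}=\tfrac{2}{420}$ tips the total to $+\tfrac1{420}>0$. Your proposal never invokes any vertex of $\mcl D$ beyond the possibility that $w$ itself lies in $\mcl D$, so it misses this mechanism entirely. Two smaller points: your bound $K(w)\geq\tfrac1{19}$ in the $\mcl A_1,\mcl A_1$ case forgets the degree-five possibility $\vtype(w)=(3,3,3,3,5)$, which gives $K(w)=\tfrac1{30}$ (the conclusion there is unaffected since $\tfrac1{30}>\tfrac1{42}$); and your ``numerically tightest case'' is not $\mcl A_5/\mcl A_6$ but the omitted $\mcl A_1/\mcl A_6$.
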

\begin{proof}
Let $x\in\FFbig$. We first check 1 easy case:
\begin{description}
\item[Case $x\in\mcl A_2$]
Then $\wbig_x=\pbig(x,x)=1$.
Moreover by \cref{big:c2} and the definition of $\pbig$ there is $v\in\mcl C_2$ with $\pbig(v,x)=1$.
We have that $(4,4)\subseteq f(v)$, and by \cref{lemma:big:close} we cannot have $f(v)=(4,4,M)$ with $M\geq 20$, so $K(v)\geq \frac 1 {19}$ if $\deg v = 3$. 
Otherwise we have $K(v)\geq\frac 1 {30}$ with equality when $f(v)=(3,4,4,5)$.
Hence $\cbig_x \geq \frac 1 N + \frac 1 {30}\great  \wbig_x\left(\frac 1 N + \frac 1 {42}\right)$.
\end{description}
If $x\not \in\mcl A_2$, we have $x=v_1v_2$ for some $v_1,v_2\in\fverts(\face)\setminus\mcl A_2$. 
We consider without loss of gernerality the following 4 similar cases:
\begin{description}
\item[Case $v_1\in\mcl A_3$ and $v_2\in\mcl A_3\cup\mcl A_1$]
Then $\wbig_x=\frac 1 2 + \frac 1 2 =1$.
Moreover $\cbig_x \geq\frac 1 2\left( \frac 1 N +\frac 1 6\right)+ \frac 1 2 \cdot \frac 1 N = \frac 1 N+\frac 1 {12}\great  \wbig_x\left(\frac 1 N + \frac 1 {42}\right)$.
\item[Case $v_1\in\mcl A_3$ and $v_2\in\mcl A_4\cup\mcl A_5\cup\mcl A_6$]
Then $\wbig_x=\frac 1 2 + 1 =\frac 3 2$.
Moreover $\cbig_x \geq\frac 1 2\left( \frac 1 N +\frac 1 6\right)+  \frac 1 N =\frac 3 2\left( \frac 1 N+\frac 1 {18}\right)\great  \wbig_x\left(\frac 1 N + \frac 1 {42}\right)$.
\item[Case $v_1\in\mcl A_4$ and $v_2\in\mcl A_3\cup\mcl A_1$]
Then $\wbig_x=1 + \frac 1 2 =\frac 3 2$.
Moreover $\cbig_x \geq\left( \frac 1 N +\frac 1 {12}\right)+ \frac 1 2 \cdot \frac 1 N = \frac 3 2\left(\frac 1 N+\frac 1 {18}\right)\great  \wbig_x\left(\frac 1 N + \frac 1 {42}\right)$.
\item[Case $v_1\in\mcl A_4$ and $v_2\in\mcl A_4\cup\mcl A_5\cup\mcl A_6$]
Then $\wbig_x=1+1=2$.
Moreover $\cbig_x \geq\left( \frac 1 N +\frac 1 {12}\right)+  \frac 1 N = 2\left(\frac 1 N+\frac 1 {24}\right)\great  \wbig_x\left(\frac 1 N + \frac 1 {42}\right)$.
\end{description}
If the above cases do not apply, we have that $\efaces(x)=\{\tau,\face\}$ for some $\tau\in\mcl T$.
Then, by \cref{big:c1}, there is a well-defined $v\in\mcl C_1$ with $\tau_v=\tau$. 
We have $\pbig(v,x)=1$ except when $v\in\mcl D\cap\mcl C_1$, in which case $\pbig(v,x)=\frac 1 2$.
We recall that, by \cref{big:cd}, the possibility $v\in\mcl D\cap\mcl C_1$ can occur only if $v_1\in\mcl A_1$ and $v_2\in\mcl A_5$ (or the opposite).
We now consider, without loss of generality, the following last 7 cases:
\begin{description}
\item[Case $v_1\in\mcl A_5$ and $v_2\in\mcl A_5$]
Then $\wbig_x=1+1=2$.
Moreover $\cbig_x \geq\left( \frac 1 N +\frac 1 {30}\right)+  \left(\frac 1 N+\frac 1 {30}\right) = 2\left(\frac 1 N+\frac 1 {30}\right)\great  \wbig_x\left(\frac 1 N + \frac 1 {42}\right)$.
\item[Case $v_1\in\mcl A_6$ and $v_2\in\mcl A_6$]
Then $\wbig_x=1+1=2$.
Moreover $f(v)=(3,6,6)$, so $K(v)=\frac 1 6$.
Hence $\cbig_x \geq \frac 1 N +\frac 1 N + \frac 1 6 = 2\left(\frac 1 N+\frac 1 {12}\right)\great  \wbig_x\left(\frac 1 N + \frac 1 {42}\right)$.
\item[Case $v_1\in\mcl A_1$ and $v_2\in\mcl A_1$]
Then $\wbig_x=\frac 1 2+\frac 1 2=1$.
Moreover $(3,3,3)\subseteq f(v)$, and by \cref{lemma:big:close} we cannot have $f(v)=(3,3,3,M)$ with $M\geq 20$.
Therefore we get, looking at the table of admissible face vectors, that $K(v)\geq \frac 1 {30}$, with equality if $f(v)=(3,3,3,3,5)$.
Hence $\cbig_x \geq \frac 1 2 \cdot\frac 1 N +\frac 1 2 \cdot\frac 1 N + \frac 1 {30} = \frac 1 N+\frac 1 {30}\great  \wbig_x\left(\frac 1 N + \frac 1 {42}\right)$.
\item[Case $v_1\in\mcl A_1$ and $v_2\in\mcl A_5$]
Then $\wbig_x=\frac 1 2+1=\frac 3 2$.
Moreover $(3,3,5)\subseteq f(v)$. 
If $v\in\mcl D$, then $f(v)=(3,3,5,6)$ and $\pbig(v,x)=\frac 1 2$, so $\pbig(v,x) K(v)=\frac 1 2 \cdot\frac 1 {30}=\frac 1 {60}$.
If otherwise $v\not\in\mcl D$, $\pbig(v,x)=1$ and $K(v)\geq \frac 1 {210}$ with equality if $f(v)=(3,3,5,7)$.
In any case $\cbig_x \geq \frac 1 2 \cdot\frac 1 N +\left(\frac 1 N + \frac 1 {30}\right) + \frac 1 {210} = \frac 3 2 \left(\frac 1 N+\frac 2 3\frac 8 {210}\right)\great  \wbig_x\left(\frac 1 N + \frac 1 {42}\right)$, because $\frac 1 {42}=\frac {15}{630}\less \frac {16}{630}$.
\item[Case $v_1\in\mcl A_5$ and $v_2\in\mcl A_6$]
Then $\wbig_x=1+1=2$.
Moreover $(3,5,6)\subseteq f(v)$, so $K(v)\geq \frac 1 {30}$ with equality if $f(v)=(3,3,5,6)$.
Hence $\cbig_x \geq\left(\frac 1 N +\frac 1{30}\right)+\frac 1 N + \frac 1 {30} = 2 \left(\frac 1 N+\frac 1{30}\right)\great  \wbig_x\left(\frac 1 N + \frac 1 {42}\right)$.
\item[Case $v_1\in\mcl A_1$, $v_2\in\mcl A_6$ and $f(v)\neq (3,3,5,6)$]
Then $\wbig_x=\frac 1 2+1=\frac 3 2$.
Moreover $(3,3,6)\subseteq f(v)$, so $K(v)\geq \frac 1 {12}$ with equality when $f(v)=(3,3,4,6)$.
Hence $\cbig_x \geq \frac 1 2 \cdot\frac 1 N +\frac 1 N + \frac 1 {12} = \frac 3 2 \left(\frac 1 N+\frac 1 {18}\right)\great  \wbig_x\left(\frac 1 N + \frac 1 {42}\right)$.
\item[Case $v_1\in\mcl A_1$, $v_2\in\mcl A_6$ and $f(v)= (3,3,5,6)$]
Then $\wbig_x=\frac 1 2+1=\frac 3 2$ and $K(v)=\frac 1 {30}$.
Moreover, there is $e\in\vedges(v)$ with $\etype(e)=(5,6)$. 
Writing $\everts(e)=\{v,w\}$ we find that $w\in\mcl D$. Notice that $(5,6)\subseteq f(w)$.
We consider the following 3 subcases.
\begin{description}
\item[Case $w\in\mcl D\cap \mcl C_1$]
Then $\pbig(w,x)=\frac 1 2$ and $K(w)= \frac 1 {30}$, because $f(w)=(3,3,5,6)$.
\item[Case $w\in\mcl D\setminus \mcl C_1$ and $\nDbig_w=1$]
Then $\pbig(w,x)=1$ and $K(w)\geq \frac 1 {210}$, with equality if $f(w)=(5,6,7)$.
\item[Case $w\in\mcl D\setminus \mcl C_1$ and $\nDbig_w=2$]
Then $\pbig(w,x)=\frac 1 2$ and $K(w)\geq \frac 1 {30}$, with equality if $f(w)=(5,6,6)$.
\end{description}
In any case $\pbig(w,x) K(w)\geq \frac 1 {210}$ and so $\cbig_x \geq \frac 1 2 \cdot\frac 1 N +\frac 1 N + \frac 1 {30} + \frac 1 {210} = \frac 3 2 \left(\frac 1 N+\frac 2 3\frac 8 {210}\right)\great  \wbig_x\left(\frac 1 N + \frac 1 {42}\right)$, because $\frac 1 {42}=\frac {15}{630}\less \frac {16}{630}$.
\end{description}
Thus, summing up, we have proved that the inequality $\cbig_x\great \wbig_x\left(\frac 1 N + \frac 1 {42}\right)$ holds for all $x\in\FFbig$.
\end{proof}

Now notice that $\Vbig\subseteq \Verts$ and so $\sum_{v\in\Vbig} K(v)\leq 2$ by \eqref{eq:total_curvature}.
Then, by \eqref{eq:big:sum} and \cref{big:main} we get
\[
2\geq  \sum_{x\in\FFbig} \cbig_x \great \sum_{x\in\FFbig} \wbig_x\left(\frac 1 N + \frac 1 {42}\right) =  1+\frac N {42},
\]
which is a contradiction if $N\geq 42$.
Thus \cref{thm:big} is proved.

\begin{remark}
	The idea of looking at the neighborhood of a big face to control its size can be traced back to \cite{DeVosMohar:3444}.
\end{remark}


\section{Analysis of the auxiliary face $\ddface$}\label{sec:ddface}

Suppose that $\pair(\ddface)\neq 0$ and let $v\in\Verts$ with $\pair(v,\ddface)\neq 0$.
We notice that $v$ cannot be a regular vertex or a $\dface$-vertex.
Moreover, $v$ cannot be a big vertex by \cref{thm:big}.
Thus, we consider the following 4 cases. 
See \cref{sec:pair:1} for the definition of vertex types.
\begin{description}
\item[Case $v$ is semi-regular]
Then $\vtype(v)=(3,5,a)$ for $a=11$ or $20\leq a\leq 39$. Therefore $\pair(v,\ddface)=\frac 1 2$ and $c_v\great 0.049$, so $c_v\pair(v,\ddface)\great 0.024$.
\item[Case $v$ is a TS-vertex]
Then $\pair(v,\ddface)\geq\frac 1 3$ and $c_v\great 0.073$, so $c_v\pair(v,\ddface)\great 0.024$.
\item[Case $v$ is a $\ddface$-vertex]
Then $\pair(v,\ddface)=1$. We observe that $c_v$ is smaller when the entries of $\vtype(v)$ are larger. 
Therefore we can estimate $c_v$ by checking the cases $\vtype(v)=(3,8,19)$, $(3,10,13)$, $(4,4,41)$, $(4,6,10)$, $(5,5,9)$, $(5,6,6)$, $(3,3,3,19)$, $(3,3,4,10)$ and $(3,3,5,6)$. 
We get, respectively: 
$c_v\great 0.001, 0.0006, 0.0148, 0.007, 0.0015, 0.023, 0.043, 0.007, 0.023$.
\item[Case $v$ is potentially-special]
Then $\pair(v,\ddface)\geq \frac 1 3$. 
Arguing  as above, we estimate $c_v$ by checking the cases 
$\vtype(v)=(3,4,41)$, $(3,10,10)$, $(4,5,6)$, $(3,3,3,12)$, $(3,3,4,7)$, $(3,3,3,3,5)$.
We get, respectively:
 $c_v\great 0.098, 0.023, 0.107, 0.073, 0.049, 0.023$, and so 
$c_v\pair(v,\ddface)\great 0.0007$. 
\end{description}
In all these cases we obtain $c_v\pair(v,\ddface)\great 0.0006$.
For the arguments of \cref{sec:208} we will need the above analysis performed with more accuracy under the additional assumption $5\in\vtype(v)$. 
We consider the following 3 cases.

\begin{description}
\item[Case $v$ is semi-regular]
Then $c_v\pair(v,\ddface)\great 0.024$.
\item[Case $v$ is a $\ddface$-vertex and $\vtype(v)\neq (5,5,9)$]
Then $\pair(v,\ddface)=1$. 
Moreover $\vtype(v)=\{3,5,a\}$ for some $a\leq 13$, or $\vtype(v)=(5,5,b)$ for some $b\leq 8$, or $\vtype(v)\in\{(5,6,6), (3,3,5,6), (3,3,5,5)\}$.
 In any case we get $c_v\pair(v,\ddface)\great 0.015$.
\item[Case $v$ is potentially-special]
Then $\vtype(v)\in\{(3,3,3,5),(3,3,3,3,5)\}$ and $\pair(v,\ddface) = 1$, or  $\vtype(v)\in\{(3,4,5),(4,5,6),(3,3,4,5)\}$ and $\pair(v,\ddface)= \frac 1 2$.  
Hence $c_v\pair(v,\ddface)\great 0.023$.
\end{description}

Therefore, we deduce that $c(\ddface)=c_+(\ddface)\great 0.0006$ unconditionally, or $c(\ddface)\great 0.015$ if there is $v\in\Verts$ with $\pair(v,\ddface)\neq 0$, $5\in\vtype(v)$ and $\vtype(v)\neq (5,5,9)$.


\section{Analysis of the auxiliary face $\dface$, and proving $\#\Verts\leq 210$}\label{sec:210}

Let $Z=\#\mcl{Z}$, where
\[
  \mcl{Z} = \{v\in\Verts:\ \vtype(v)\in\{(5,6,7),(3,3,5,7)\}\}.
\]
With our construction of the pairing, we have $\pair(v,\dface)\neq 0$ if and only if $v\in\mcl{Z}$. For every $v\in\mcl{Z}$ we have $K(v)=\frac{2}{210}$, $\pair(v,\dface)=1$ and $c_v=\left(\frac{2}{210}-\frac{2}{209}\right)=-\frac{2}{210\cdot 209}$. 

\begin{lemma}\label{lemma:Z}
If $G$ is a PCC graph, then $Z\leq 210$.
\end{lemma}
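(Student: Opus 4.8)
The plan is to derive the bound directly from the Gauss--Bonnet-type identity \eqref{eq:total_curvature}, which asserts that $\sum_{v\in\Verts}K(v)=2$ for any finite planar graph, combined with the defining positivity property $K(v)\great 0$ of a PCC graph.

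First I would record the exact curvature of the vertices counted by $Z$. Both admissible face vectors appearing in the definition of $\mcl Z$ yield the same value: for $\vtype(v)=(5,6,7)$ one computes
\[
K(v)=1-\frac32+\frac15+\frac16+\frac17=\frac{2}{210},
\]
and for $\vtype(v)=(3,3,5,7)$ one computes
\[
K(v)=1-2+\frac13+\frac13+\frac15+\frac17=\frac{2}{210}.
\]
Hence every $v\in\mcl Z$ satisfies $K(v)=\frac{2}{210}$ exactly.

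Then, since $\mcl Z\subseteq\Verts$ and every vertex of a PCC graph has strictly positive combinatorial curvature, the total contribution of the vertices lying outside $\mcl Z$ is nonnegative, so
\[
2=\sum_{v\in\Verts}K(v)\geq\sum_{v\in\mcl Z}K(v)=\frac{2Z}{210},
\]
which gives $Z\leq 210$.

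I do not anticipate a genuine obstacle here: the only points that require care are the two short arithmetic verifications that $K(v)=\frac{2}{210}$ and the observation that deleting the positively curved vertices not in $\mcl Z$ can only decrease the total curvature. This lemma is precisely the borderline case that motivates the auxiliary face $\dface$: the value $Z=210$ is consistent with the curvature summing to $2$, which is why the discharging argument by itself yields only $\#\Verts\leq 210$, and the remaining gap down to $208$ is closed afterwards in \cref{sec:209} and \cref{sec:208}.
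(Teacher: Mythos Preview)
Your proof is correct and follows essentially the same approach as the paper: use the Gauss--Bonnet identity \eqref{eq:total_curvature}, the fact that every $v\in\mcl Z$ has $K(v)=\tfrac{2}{210}$, and the positivity of curvature on the remaining vertices to bound $\tfrac{2Z}{210}\leq 2$. The paper's version is terser (it takes the value $K(v)=\tfrac{2}{210}$ as already recorded just before the lemma), but the logic is identical.
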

\begin{proof}
We already observed in equation \eqref{eq:total_curvature} that $\sum_{v\in\Verts} K(v)=2$.
Therefore \[Z \frac{2}{210}=\sum_{v\in\mcl{Z}} K(v)\leq 2\]
\end{proof}
As an immediate consequence of \cref{lemma:Z} we obtain, if $\pair(\dface)\neq 0$, that 
\begin{equation}\label{eq:dface}
	c(\dface)=-\frac{2 Z}{210\cdot 209}\geq -\frac{2}{209}\great -0.0096.
\end{equation}
In particular, we have just concluded the proof of \cref{prop:main}.

\begin{corollary}\label{cor:main}
For every $\face\in\FFaces\setminus\{\dface\}$ we have $c(G)\geq c(\face) -0.01$.
\end{corollary}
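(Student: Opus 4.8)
The plan is to exploit the fact that, among all the faces in $\FFaces$, the auxiliary face $\dface$ is the only one whose contribution $c(\dface)$ can be negative, and that this single negative contribution is already bounded below by $-2/209$, which is $\great -0.01$. So the corollary should drop out of \cref{prop:main} together with the estimate \eqref{eq:dface} by a completely routine splitting of the sum $c(G)=\sum_{\face\in\FFaces}c(\face)$.

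Concretely, I would proceed in three short steps. First, invoke \cref{thm:big}: since a PCC graph has no face of size $\geq 42$, every element of $\FFaces$ is either $\dface$, $\ddface$, or a face whose size lies in $\{5,7,11,13\}\cup\{14,\dots,41\}$, so that \cref{prop:main}(i)--(viii) does cover every possibility. Second, upgrade \cref{prop:main} to the uniform statement that $c(\face)\geq 0$ for every $\face\in\FFaces\setminus\{\dface\}$: indeed \cref{prop:main} gives a strictly positive lower bound whenever $\pair(\face)\neq 0$, while if $\pair(\face)=0$ then from $c(\face)=\sum_{v\in\Verts}c_v\,\pair(v,\face)$ and $\pair(\face)=\sum_{v\in\Verts}\pair(v,\face)=0$ with all $\pair(v,\face)\geq 0$ one gets $c(\face)=0$. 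Third, fix $\face\in\FFaces\setminus\{\dface\}$ and write
\[
  c(G)=\sum_{\sigma\in\FFaces}c(\sigma)=c(\face)+c(\dface)+\!\!\sum_{\substack{\sigma\in\FFaces\\ \sigma\neq\face,\,\sigma\neq\dface}}\!\! c(\sigma);
\]
the last sum is $\geq 0$ by the second step, and $c(\dface)\great -0.0096$ by \eqref{eq:dface} if $\pair(\dface)\neq 0$ (and $c(\dface)=0$ otherwise), so $c(G)\geq c(\face)+c(\dface)\geq c(\face)-0.01$.

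There is no genuine obstacle here: the entire substance of the estimate is packed into \cref{prop:main}, whose proof occupies the rest of the paper, and into \cref{thm:big}. The only points that require a moment's care are (a) making sure \cref{thm:big} is quoted so that the case list of \cref{prop:main} is exhaustive, and (b) noticing that faces $\sigma$ with $\pair(\sigma)=0$ still satisfy $c(\sigma)=0\geq 0$, which is what lets us replace the various explicit positive lower bounds of \cref{prop:main} by the single weak inequality $c(\sigma)\geq 0$ used in the displayed splitting.
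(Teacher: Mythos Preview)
Your proof is correct and follows exactly the same approach as the paper, which simply states that the result is immediate from $c(\dface)\great -0.01$ and $c(\face')\geq 0$ for all $\face'\in\FFaces\setminus\{\dface\}$. Your version is in fact more careful, since you explicitly address why \cref{thm:big} makes the case list of \cref{prop:main} exhaustive and why faces with $\pair(\sigma)=0$ satisfy $c(\sigma)=0$.
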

\begin{proof}
This is immediate from $c(\dface)\great -0.01$ and $c(\face')\geq 0$ for all $\face'\in\FFaces\setminus\{\dface\}$.
\end{proof}

\begin{corollary}\label{lemma:210}
If $G$ is a PCC graph, then $\#\Verts \leq 210$. Moreover, we have equality if and only if $Z=210$.
\end{corollary}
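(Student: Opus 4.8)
The plan is to read the bound straight off the identity $c(G)=\frac{2(209-\#\Verts)}{209}$ of \eqref{eq:contribution}, fed with the sign information on the local contributions $c(\face)$ collected in \cref{prop:main}. First I would record that $c(\face)\geq 0$ for every $\face\in\FFaces\setminus\{\dface\}$: if $\pair(\face)=0$ then $c(\face)=\sum_{v\in\Verts}c_v\,\pair(v,\face)=0$, because the nonnegative numbers $\pair(v,\face)$ add up to $\pair(\face)=0$; and if $\pair(\face)\neq 0$, then --- invoking \cref{thm:big} to rule out faces of size at least $42$ --- $\face$ is either $\ddface$ or a face whose size lies in $\{5,7,11\}\cup\{13,\dots,41\}$, so exactly one of the cases (i)--(viii) of \cref{prop:main} applies and furnishes a strictly positive lower bound for $c(\face)$. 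Combining this with $c(\dface)\geq -\frac{2}{209}$ from \eqref{eq:dface} (which also holds, trivially, when $\pair(\dface)=0$, since then $Z=0$ and $c(\dface)=0$), I get $c(G)=\sum_{\face\in\FFaces}c(\face)\geq -\frac{2}{209}$.

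Substituting \eqref{eq:contribution} into this inequality gives $\frac{2(209-\#\Verts)}{209}\geq -\frac{2}{209}$, hence $209-\#\Verts\geq -1$, which is precisely $\#\Verts\leq 210$.

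For the equality statement, note that by \eqref{eq:contribution} we have $\#\Verts=210$ if and only if $c(G)=-\frac{2}{209}$. Write $c(G)=c(\dface)+\sum_{\face\in\FFaces\setminus\{\dface\}}c(\face)$, where every term of the sum is $\geq 0$. If $c(G)=-\frac{2}{209}$, then $c(\dface)\leq -\frac{2}{209}<0$, so in particular $\pair(\dface)\neq 0$ and $c(\dface)=-\frac{2Z}{210\cdot 209}$ by \eqref{eq:dface}, whence $Z\geq 210$; together with \cref{lemma:Z} this yields $Z=210$. Conversely, if $Z=210$, then $\sum_{v\in\mcl Z}K(v)=210\cdot\frac{2}{210}=2=\sum_{v\in\Verts}K(v)$ by \eqref{eq:total_curvature}, and since $K(v)\great 0$ for every vertex of a PCC graph, this forces $\Verts=\mcl Z$ and hence $\#\Verts=Z=210$.

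I do not anticipate a genuine obstacle here: the heavy lifting has already been done --- the construction of the pairing $\pair$ in \cref{sec:pair:1,sec:pair:2}, the lengthy case analysis behind \cref{prop:main}, the non-existence of faces of size $\geq 42$ in \cref{thm:big}, and the pigeonhole estimate \cref{lemma:Z}. The one thing to verify with care is the bookkeeping that cases (i)--(ix) of \cref{prop:main} genuinely exhaust all $\face\in\FFaces$ with $\pair(\face)\neq 0$ (this is exactly where \cref{thm:big} enters) and that every face with $\pair(\face)=0$ contributes $0$ to $c(G)$, so that the positive terms can only help.
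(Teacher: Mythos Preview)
Your argument is correct and follows exactly the same route as the paper: bound $c(G)$ from below by $-\tfrac{2}{209}$ using \cref{prop:main} (together with \cref{thm:big}) for the non-$\dface$ faces and \eqref{eq:dface} for $\dface$, then read off $\#\Verts\leq 210$ from \eqref{eq:contribution}. Your treatment of the equality case is in fact more explicit than the paper's terse ``from which the thesis follows'': you spell out both directions, including the observation that $Z=210$ forces $\Verts=\mcl Z$ via the total-curvature identity \eqref{eq:total_curvature}.
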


\begin{proof}
Combining \cref{prop:main} and \cref{lemma:Z} with formulas \eqref{eq:dface} and \eqref{eq:contribution}, we get
\[
\frac{2(209-\#\Verts)}{209}\geq -\frac{2 Z}{210\cdot 209}\geq \frac{-2}{209},
\]
from which the thesis follows.
\end{proof}


\section{Proving the upper bound $\#\Verts\leq 209$}\label{sec:209}


The above \cref{lemma:210} forces a rigid combinatorial description in case $\#\Verts=210$.
We rule out this possibility by means of a double-counting argument.

\begin{figure}[ht]\centering
\input{210_3.tikz}
\ 
\input{210_5.tikz}
\ 
\input{210_5no.tikz}

\input{210_6.tikz}
\quad \quad
\input{210_7.tikz}
\caption{Illustrations for Lemma 16.1.}
\label{fig:209}
\end{figure}

\begin{lemma}\label{lemma:210:lemma}
Suppose that $\#\Verts=210$. Then there exist $\face_5,\face_6,\face_7\in\Faces$ with $\abs{\face_5}=5$, $\abs{\face_6}=6$ and $\abs{\face_7}=7$. Moreover, if $\face_5,\face_6,\face_7$ are such faces, we have
\begin{itemize}
\item 
$\#\{e\in\fedges(\face_5):\ \etype(e)=(5,6)\}=1$ 
and 
$\#\{e\in\fedges(\face_5):\ \etype(e)=(5,7)\}=2$;
\item 
$\#\{e\in\fedges(\face_6):\ \etype(e)=(5,6)\}=3$ 
and 
$\#\{e\in\fedges(\face_6):\ \etype(e)=(6,7)\}=3$;
\item 
$\#\{e\in\fedges(\face_7):\ \etype(e)=(5,7)\}=3$ 
and 
$\#\{e\in\fedges(\face_7):\ \etype(e)=(6,7)\}=2$.
\end{itemize}
\end{lemma}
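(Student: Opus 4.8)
The plan is to use \cref{lemma:210} to reduce to a very rigid situation and then read off the local structure around faces of each size. First I would note that $\#\Verts=210$ forces $Z=210$ by \cref{lemma:210}, and since $Z\leq\#\Verts$ this means $\vtype(v)\in\{(5,6,7),(3,3,5,7)\}$ for \emph{every} $v\in\Verts$. Hence every face has size in $\{3,5,6,7\}$; as no admissible vertex here carries two faces of equal size $\geq 7$, \cref{lemma:distinct} shows that all the multisets $\fverts(\sigma)$, $\fedges(\sigma)$ that occur below are in fact sets. Moreover an edge of type $(i,j)$ requires a vertex whose face vector contains both $i$ and $j$, so the only edge types occurring in $G$ are $(3,3)$, $(3,5)$, $(3,7)$, $(5,6)$, $(5,7)$, $(6,7)$; in particular two faces of the same size $\geq 5$ are never adjacent, and a triangle is never adjacent to a hexagon.

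\textbf{Counting and the hexagon.} Writing $n_k$ for the number of faces of size $k$, I would observe that each of $(5,6,7)$ and $(3,3,5,7)$ contains exactly one $5$ and exactly one $7$, so every vertex lies on exactly one pentagon and one heptagon; double counting vertex--face incidences gives $5n_5=7n_7=\#\Verts=210$, hence $n_5=42$ and $n_7=30$, so pentagons and heptagons exist. For a hexagon $\face_6$ the situation is immediate: each of its boundary vertices carries a size-$6$ face, hence has face vector $(5,6,7)$, hence degree $3$ with $\face_6$ flanked by a pentagon and a heptagon, so the two edges of $\face_6$ at that vertex have types $(5,6)$ and $(6,7)$; these alternate around $\partial\face_6$, and as $\abs{\face_6}=6$ is even exactly three boundary edges have each type. (That a hexagon exists at all will follow from the pentagon analysis below.)

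\textbf{Pentagons and heptagons.} Fix $\face\in\{\face_5,\face_7\}$. At each boundary vertex $v$ I would read off from the local cyclic order of $\vfaces(v)$ the unordered pair of types of the two edges of $\face$ at $v$: it is $\{(5,6),(5,7)\}$ or $\{(6,7),(5,7)\}$ when $\vtype(v)=(5,6,7)$, it is $\{(3,\abs{\face}),(5,7)\}$ when $\vtype(v)=(3,3,5,7)$ with the two triangles at $v$ adjacent around $v$, and $\{(3,\abs{\face}),(3,\abs{\face})\}$ when they are non-adjacent. Counting incidences of boundary vertices with $(5,7)$-edges, twice the number of $(5,7)$-edges of $\face$ equals the number of boundary vertices of the first two kinds; reducing modulo $2$, the number of ``non-adjacent-triangle'' boundary vertices of $\face$ is congruent to $\abs{\face}$ and hence odd, so $\face$ has at least one. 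Since in addition no two $(5,7)$-edges are consecutive on $\partial\face$ and every $(5,6)$- or $(6,7)$-edge on $\partial\face$ is flanked by $(5,7)$-edges, a short arc count then restricts the cyclic pattern of boundary-edge types of $\face_5$ (resp.\ $\face_7$) to a few candidates, and only $1\cdot(5,6)+2\cdot(5,7)+2\cdot(3,5)$ (resp.\ $2\cdot(6,7)+3\cdot(5,7)+2\cdot(3,7)$) should survive; in particular, once a pentagon is known to carry a $(5,6)$-edge a hexagon exists.

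\textbf{The main obstacle.} I expect the delicate step to be eliminating the remaining candidate patterns -- those where $\face_5$ has no $(5,6)$-edge or $\face_7$ has no $(6,7)$-edge, i.e.\ those with too many boundary triangles. Here I would use the rigidity of PCC graphs: a boundary edge of type $(3,\abs{\face})$ borders a triangle $\tau$, whose third vertex $w$ again has $\vtype(w)\in\{(5,6,7),(3,3,5,7)\}$, and the admissible ways for $\tau$, for $\oppf(v,\tau)$, and for the neighbouring faces to fit together are strongly constrained by \cref{table:admissible} and \cref{lemma:distinct}; propagating these constraints around $\partial\face$ shows the excess-triangle patterns cannot close up. Once this is carried out the local edge-pattern at every pentagon and heptagon is pinned down, yielding exactly the asserted counts.
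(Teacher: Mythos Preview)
Your reductions are correct and match the paper: from \cref{lemma:210} every vertex is $(5,6,7)$ or $(3,3,5,7)$, the edge types are restricted as you say, and the hexagon case is immediate. Your parity/arc framework for pentagons and heptagons is also valid: at each boundary vertex the unordered pair of edge types is one of three possibilities, no two $(5,7)$-edges are consecutive, and every $(5,6)$- or $(6,7)$-edge is flanked by $(5,7)$-edges.

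The gap is in your ``main obstacle'' paragraph. The constraints you have listed so far do \emph{not} exclude, for instance, the pentagon patterns $(5,7)(3,5)(5,7)(3,5)(3,5)$ or $(3,5)^5$; you still need a concrete lemma to kill these, and ``propagating constraints'' is not one. The paper supplies this lemma up front, before touching pentagons at all: every triangle has boundary edge-types exactly $\langle(3,3),(3,5),(3,7)\rangle$. This is a one-line check (each vertex of a triangle has type $(3,3,5,7)$), and it is the engine of the whole proof. It forces every maximal run of $(3,\abs\face)$-edges along $\partial\face$ to have length exactly~$2$: a run of length~$1$ makes the bounding triangle have two $(3,3)$-edges, and a run of length~$\geq 3$ forces two consecutive $\langle 3,\abs\face,3,*\rangle$-vertices, which the triangle lemma forbids. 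With runs of length exactly $2$ and your other constraints, the pentagon and heptagon patterns are uniquely determined by a genuine short arc count.

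The paper organizes this even more directly: it locates one $\langle 3,5,3,7\rangle$-vertex $v_1$ on the pentagon by oddness, uses the triangle lemma once on each side to force the neighbours $v_2,v_5$ into type $\langle 5,3,3,7\rangle$ (hence edges $v_2v_3$, $v_4v_5$ are $(5,7)$), and then observes that the last edge cannot be $(3,5)$ because its bounding triangle would have an apex $w$ with $(3,3,3)\subseteq\vtype(w)$. The heptagon is handled analogously, feeding the already-proved pentagon pattern back in. I would recommend isolating the triangle edge-type fact as a standalone lemma at the start; with it in hand your arc-count becomes a two-line computation rather than a promissory note.
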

\begin{proof}
Let $\face\in\Faces$ with $\abs{\face}=n\in\{3,5,6,7\}$ and $\fedges(\face)=\langle e_1,\ldots,e_n\rangle$. We define
\[  \ftype(\face):=\langle \etype(e_1), \ld , \etype(e_n) \rangle ,\]
with the cyclic ordering induced by the elements of $\fedges(\face)$. 
We recall that by \cref{lemma:210}  we have $\vtype(v)=(5,6,7)$ or $\vtype(v)=(3,3,5,7)$ for all $v\in\Verts$. 
We will show that, up to cyclic reordering and up to orientation: 
\begin{enumerate}[(i)]
\item 
if $\abs{\face}=3$, $\ftype(\face)=\langle (3,3), (3,5), (3,7) \rangle$;
\item
if $\abs{\face}=5$, $\ftype(\face)=\langle (5,6), (5,7), (3,5), (3,5), (5,7) \rangle$;
\item
if $\abs{\face}=6$, $\ftype(\face)=\langle (5,6), (6,7), (5,6), (6,7), (5,6), (6,7) \rangle$;
\item
if $\abs{\face}=7$, $\ftype(\face)=\langle (5,7), (6,7), (5,7), (3,7), (3,7), (5,7), (6,7) \rangle$.
\end{enumerate}
Hence, we consider the following 4 cases. 
\begin{description}
\item[Case $\abs{\face}=3$]
Then $\forall\, v\in\fverts(\face)$ we have $\vtype(v)=(3,3,5,7)$, so (i) is an easy check.
\item[Case $\abs{\face}=5$]
Since the cardinality of $\fedges(\face)$ is odd, there must be a pair of  edges $e,e'\in \fedges(\face)$, consecutive on $\face$ and meeting at $v_1$, with $\etype(e),\etype(e')\neq (5,7)$.
This shows that $\vtype(v_1)=\langle 3,5,3,7\rangle$ and $\langle (3,5), (3,5) \rangle\subseteq \ftype(\face)$.
 Let $\fverts(\face)=\langle v_1,\ldots,v_5\rangle$ with $e=v_1v_2$.
Let $\efaces(e)=\{\tau,\face\}$, so $\abs{\tau}=3$. 
Then (i) above forces $\oppf(v_1,\tau)$ to be a triangle, so $\vtype(v_2)=\langle 5,3,3,7\rangle$ and $\etype(v_2v_3)=(5,7)$. 
 If we do the same for $e'$, we get that $\langle (5,7), (3,5), (3,5), (5,7) \rangle\subseteq \ftype(\face)$. 
The remaining edge $e''=v_3v_4$ satisfies a priori $\etype(e'')=(3,5)$ or $\etype(e'')=(5,6)$, so suppose by contradiction that $\etype(e'')=(3,5)$. Let $\efaces(e'')=\{\tau'',\face\}$, with $\tau''=3$, and let $\fverts(\tau'')=\{v_3,v_4,w\}$.  Then $\vtype(v_3),\vtype(v_4)=(3,3,5,7)$ and $(3,3,3)\subseteq \vtype(w)$, which is not admissible if $\#\Verts=210$.
\item[Case $\abs{\face}=6$]
Then $\forall\, v\in\fverts(\face)$ we have $\vtype(v)=(5,6,7)$, so (iii) is an easy check.
\item[Case $\abs{\face}=7$]
Since the cardinality of $\fedges(\face)$ is odd, we deduce analogously to the previous case that $\langle (5,7), (3,7), (3,7), (5,7) \rangle\subseteq \ftype(\face)$. Then (ii) above, already proved, forces $\langle (6,7), (5,7), (3,7), (3,7), (5,7), (6,7) \rangle\subseteq \ftype(\face)$, and (iii) forces (iv).
\end{description}
Since $\Verts=\mcl{Z}$, we clearly have the existence of faces with 5 and 7 sides. The above analysis then implies the existence of triangles and of faces with 6 sides. The remaining assertions follow from (ii)-(iv).
\end{proof}

\begin{corollary}\label{lemma:210:no}
There is no PCC graph with exactly 210 vertices.
\end{corollary}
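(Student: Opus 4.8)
The plan is to derive a contradiction from \cref{lemma:210:lemma} by a double-counting argument comparing the number of edges of each type incident to $5$-faces, $6$-faces and $7$-faces. Suppose $\#\Verts = 210$, so that $\Verts = \mcl Z$ and every vertex has face vector $(5,6,7)$ or $(3,3,5,7)$. Let $n_5, n_6, n_7$ denote the number of faces of size $5$, $6$, $7$ respectively; by \cref{lemma:210:lemma} all three are positive. For each pair $(i,j)$ with $\{i,j\}\subseteq\{5,6,7\}$, let $m_{ij}$ be the number of edges $e\in\Edges$ with $\etype(e) = (i,j)$.

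First I would count $(5,6)$-edges in two ways, once from the $5$-faces and once from the $6$-faces. By the first bullet of \cref{lemma:210:lemma}, each face of size $5$ has exactly one edge of side vector $(5,6)$, and by the second bullet each face of size $6$ has exactly three such edges; since each $(5,6)$-edge lies on exactly one $5$-face and one $6$-face, this gives $n_5 = m_{56} = 3 n_6$. Similarly, counting $(6,7)$-edges from the $6$-faces and the $7$-faces yields $3 n_6 = m_{67} = 2 n_7$, and counting $(5,7)$-edges from the $5$-faces and the $7$-faces yields $2 n_5 = m_{57} = 3 n_7$. From the first relation $n_5 = 3 n_6$ and the third $2 n_5 = 3 n_7$ we get $n_7 = 2 n_6$, but then the second relation reads $3 n_6 = 2 n_7 = 4 n_6$, forcing $n_6 = 0$, which contradicts \cref{lemma:210:lemma}. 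Hence no PCC graph has exactly $210$ vertices.

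The main obstacle is making sure the edge-type counts in \cref{lemma:210:lemma} are being applied consistently: one must check that a $(5,6)$-edge is indeed counted once by the $5$-face side and once by the $6$-face side (and never twice by faces of the same size), which follows because the two faces incident to any edge have distinct sizes here — a $(5,6)$-edge cannot border two $5$-faces — so $\fedges(\face)$ and $\fverts(\face)$ really are sets by \cref{lemma:distinct} and the double count is clean. Once that bookkeeping is in place the arithmetic is immediate. I would write the three identities as a small linear system, observe that its only solution has $n_5 = n_6 = n_7 = 0$, and conclude with \cref{lemma:210} that $\#\Verts \le 209$; combined with \cref{lemma:210} this also finishes the deduction $\#\Verts\le 209$ promised in \cref{sec:main}, pending the argument of \cref{sec:208} to reach $208$.
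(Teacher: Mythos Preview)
Your proof is correct and follows essentially the same approach as the paper: both set up the same three double-counting identities for $(5,6)$-, $(5,7)$-, and $(6,7)$-edges using \cref{lemma:210:lemma}, observe that the resulting linear system forces all face counts to vanish, and obtain a contradiction with the existence guaranteed by \cref{lemma:210:lemma}. Your extra remark justifying that each such edge is counted once on each side is a welcome clarification, but the argument is otherwise identical to the paper's.
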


\begin{proof}
Let $A=\#\mcl{A}$, $B=\#\mcl{B}$, $C=\#\mcl{C}$, where
\[
\begin{aligned}
  \mcl{A}& = \{\face\in\Faces: \abs{\face}=5\},\\
	\mcl{B}& = \{\face\in\Faces: \abs{\face}=6\},\\
  \mcl{C}& = \{\face\in\Faces: \abs{\face}=7\}.\\
\end{aligned}
\]
From \cref{lemma:210:lemma} we derive the following system of equalities, by double-counting the edges $e\in\Edges$ with $\etype(e)=(5,6), (5,7)$, or $(6,7)$: 
\[
\left\{
\begin{aligned}
&A= 3B \\
&2A=3C\\
&3B=2C
\end{aligned}
\right.
\]
which is evidently inconsistent.
\end{proof}


\section{{\red}triangles and conclusion}\label{sec:208}

In the following lemma we use the data that we have acquired so far to considerably reduce the combinatorial complexity surrounding pentagons in an hypothetical PCC graph with 209 vertices.

\begin{lemma}\label{lemma:208:5}
Suppose that $\#\Verts = 209$, let $\face\in\Faces$ with $\abs{\face}=5$ and let
\[
\mcl{S} = \{(4,4,5),(3,4,4,5)\} \cup\{(4,5,a):\ 14\leq a\leq 19\}.
\]
Then either all $v\in\fverts(\face)$ satisfy $\vtype(v)\in\mcl{S}$, or all $v\in\fverts(\face)$ are $\dface$-vertices.
In particular, if $v_1,v_2\in\fverts(\face)$ and $v_1$ is a $\dface$-vertex, then $v_2$ is a $\dface$-vertex as well.
\end{lemma}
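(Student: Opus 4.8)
The plan is to exploit the very rigid arithmetic that governs faces with five sides once $\#\Verts=209$, arguing ``locally'' along the boundary cycle of $\face$. First I would recall from \cref{prop:main} and \cref{cor:main} the budget constraint: if $\#\Verts=209$ then by \eqref{eq:contribution} we have $\sum_{\sigma\in\FFaces}\c(\sigma)=\frac{4}{209}$, so every $\c(\sigma)$ is squeezed against its lower bound from \cref{prop:main}. In particular, since $\c(\dface)\geq -\frac{2Z}{210\cdot 209}$ and $Z\le 210$, the slack available is tiny. I would then look at $\fverts(\face)$ for our fixed pentagon $\face$. Every $v\in\Verts$ has $\vtype(v)$ admissible, and by inspection of \cref{table:admissible} the face vectors containing a $5$ are exactly $(3,a,5)$ with $3\le a\le 5$; $(3,5,a)$ with $5\le a\le 41$; $(4,5,a)$ with $5\le a\le 19$; $(5,5,a)$ with $5\le a\le 9$; $(5,6,6)$; $(5,6,7)$; $(3,4,4,5)$; $(3,3,4,5)$; $(3,3,5,a)$ with $5\le a\le 7$; $(3,3,3,3,5)$; $(3,4,5,?)$ — I would enumerate these carefully. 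Among these, the $\dface$-vertices are precisely those with $\vtype(v)\in\{(5,6,7),(3,3,5,7)\}$ (\cref{sec:pair:1:dface}), and $\mcl S$ collects $(4,4,5)$, $(3,4,4,5)$, and $(4,5,a)$ with $14\le a\le 19$ — exactly the vertices that contribute negatively or nearly-zero to $\c(\face)$ in \cref{table:5}.

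The core of the argument is a dichotomy forced by the analysis in \cref{sec:5}: I would show that if some $v_0\in\fverts(\face)$ has $\vtype(v_0)\notin\mcl S$ and $v_0$ is not a $\dface$-vertex, then $\c(\face)$ is bounded below by a quantity strictly larger than what the global budget permits. Concretely, from \cref{table:5} such a $v_0$ contributes at least $c_4=0.015$ (if $4\in\vtype(v_0)$, e.g. $(3,4,5)$, $(4,5,5)$, $(4,5,6)$, $(4,5,7)$, $(4,5,11)$, $(3,3,4,5)$) or, if $4\notin\vtype(v_0)$ but $\vtype(v_0)$ still causes $\pair(v_0,\face)\neq 0$, one checks it lands among $(3,5,a)$ with $14\le a\le 19$ (contributes $\ge c_2=0.038$). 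In either event $\c_+(\face)\ge 0.015$. Combined with the case analysis of \cref{sec:5}, which shows $\c(\face)\ge 0.002$ always and in fact $\c(\face)\ge c_4>0.015$ whenever $A=0$ (no $(4,5,a)$-with-$a$-large vertices) — and whenever $A\ge 1$ one produces an extra positive contribution — I would push the estimate: the presence of a non-$\mcl S$, non-$\dface$ vertex on $\face$ forces $\c(\face)\ge 0.015$, hence $\c(G)\ge 0.015 - 0.01 > \frac{4}{209}\approx 0.0191$... here I must be careful, $0.015-0.01=0.005<0.0191$, so a cruder bound is not enough; instead I would use that $\c(\dface)$ is not just $\ge -0.01$ but the \emph{total} sum equals $\frac{4}{209}$, so $\c(\face)\le \frac{4}{209}-\sum_{\sigma\ne\face}\c(\sigma)\le \frac{4}{209}+\frac{2}{209} = \frac{6}{209}<0.0288$; thus any $\c(\face)$ strictly exceeding the lower bounds of \cref{prop:main} for \emph{other} faces must be absorbed, and a careful bookkeeping of which faces can even be nonzero yields the needed contradiction when $\c(\face)\ge 0.015$. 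The cleanest route is: if $\c(\face)\ge 0.015$ while every other face's contribution is $\ge$ its minimum from \cref{prop:main}, then since in \cref{sec:209} we ruled out $\#\Verts=210$, the equation $\sum\c(\sigma)=\frac 4{209}$ combined with $\c(\dface)\ge -\frac{2\cdot 209}{210\cdot 209}=-\frac{1}{105}$ (as $Z\le 209$ now) gives $\c(\face)\le \frac 4{209}+\frac 1{105}<0.0288$, which is consistent — so the real contradiction must come from showing $\c(\face)$ is even larger, namely by showing that a single ``bad'' vertex on $\face$ forces \emph{several} good vertices on $\face$ (via the consecutivity arguments already used in the $A=2$, $A=4$ subcases of \cref{sec:5}), pushing $\c(\face)$ past $\frac 4{209}+\frac 1{105}$.

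So the detailed plan is: (1) enumerate the admissible $\vtype(v)$ with $5\in\vtype(v)$ and sort them into ``$\dface$-vertices'', ``$\mcl S$-vertices'', and ``others''; (2) for each ``other'' type, read off from \cref{table:5} its positive contribution $c_v\pair(v,\face)\ge c_4 = 0.015$ (or more); (3) observe, using \cref{lemma:distinct} and the consecutivity structure of a $5$-cycle together with the admissibility table, that if $\fverts(\face)$ contains at least one ``other'' vertex and at least one $\dface$-vertex, then two adjacent vertices $v,v'$ on $\face$ have $\etype(vv')$ inconsistent with any admissible face vector for the third vertex of the triangle on the far side of $vv'$ — exactly the mechanism by which \cref{rule:3445} forbids $(3,4,4,5)$ specials adjacent to $(4,5,a)$ vertices; more precisely, a $\dface$-vertex $v'$ has $\vtype(v')\in\{(5,6,7),(3,3,5,7)\}$ and its two edges on $\face$ have $\etype$ in $\{(5,6),(5,7),(3,5)\}$, whereas a non-$\mcl S$/non-$\dface$ vertex forces an incompatible edge type, yielding a contradiction via a vertex with non-admissible face vector; (4) likewise rule out $\fverts(\face)$ mixing ``$\mcl S$-vertices'' with $\dface$-vertices. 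The main obstacle I anticipate is step (3)–(4): verifying that no admissible configuration on the $5$-cycle interpolates between a $\dface$-vertex and either an $\mcl S$-vertex or an ``other'' vertex. This is a finite but delicate check that the ``types'' of vertices propagate around the pentagon — I would do it by examining, for each edge $e\in\fedges(\face)$ with $\etype(e)$ a given pair, which face vectors the two endpoints may simultaneously have, using \cref{lemma:distinct} to know $\fverts(\face)$ is a genuine set and using \cref{table:admissible} repeatedly; once one establishes the propagation on a single edge, an induction around the $5$ edges finishes the ``in particular'' clause: any two vertices of $\face$ share the same coarse type, and if one is a $\dface$-vertex then so are all.
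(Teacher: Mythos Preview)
Your argument hinges on a decisive arithmetic slip: when $\#\Verts = 209$, equation \eqref{eq:contribution} gives $\c(G) = \frac{2(209-209)}{209} = 0$, not $\frac{4}{209}$. You are aiming at the wrong target, and that is the only reason your budget never closes. With $\c(G)=0$, your first instinct is exactly right and is essentially what the paper does: if some vertex on $\face$ forces $\c(\face) > 0.015$, then \cref{cor:main} gives $\c(G) > 0.005 > 0$, contradiction; so the only surviving pentagon configuration with $\pair(\face)\neq 0$ is ``Case $A=4$'' of \cref{sec:5}, i.e.\ all vertices in $\mcl S$. No edge-type propagation around the pentagon is needed.

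There is a second, smaller gap you do not address: several admissible face vectors containing $5$ satisfy $\pair(w,\face)=0$ --- e.g.\ $(5,5,9)$, $(5,6,6)$, $(3,3,5,5)$, $(3,3,5,6)$ are $\ddface$-vertices, and $(3,3,3,5)$, $(3,3,3,3,5)$, $(3,4,4,5)$ may be special to an $11$- or $40/41$-face --- so bounding $\c(\face)$ alone does not cover them. The paper's device is to follow, for each $w\in\fverts(\face)$, where $\pair(w,\cdot)$ lands: if at $\ddface$, the refined estimate at the end of \cref{sec:ddface} (namely $\c(\ddface) > 0.015$ whenever $5\in\vtype(w)$ and $\vtype(w)\neq(5,5,9)$) again forces $\c(G)>0$; if $w$ is special to some $\face'$ with $\abs{\face'}\in\{11,40,41\}$, one invokes the bound on $\c(\face')$ from \cref{prop:main} or the estimates of \cref{sec:11}. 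A short parity argument on the $5$-cycle then eliminates the residual $(5,5,9)$ possibility. Your fallback edge-type propagation would not sidestep this bookkeeping: for instance a $\dface$-vertex with $\vtype=\langle 3,5,3,7\rangle$ and a neighbour with $\vtype=(3,5,10)$ can share an edge of type $(3,5)$ on $\face$ without any immediate local inconsistency, so edge types alone do not separate $\dface$-vertices from all ``others''.
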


\begin{proof}
Let $w\in\fverts(\face)$. By looking at our construction of the pairing we find that at least one of the following 4 cases apply.

\begin{description}
\item[Case $\pair(\face)\neq 0$] 
Then by the arguments of \cref{sec:5} we see that either $c(\face)\great 0.015$ or the \emph{Case $A=4$} (of \cref{sec:5}) applies to $\face$.
In the first case we get $c(G)\great 0.005$ by \cref{cor:main}, while in the second case we get $\vtype(v)\in\mcl{S}$ for all $v\in\fverts(\face)$. 
\item[Case $\exists\face'\in\Faces$ with $\pair_2(w,\face')\neq 0$ and $\abs{\face'}\in\{40,41\}$] 
Then by \cref{prop:main} we have $c(\face')\great 0.011$ and $c(G)\great 0.001$ by \cref{cor:main}.
\item[Case $\exists\face'\in\Faces$ with $\pair_2(w,\face')\neq 0$ and $\abs{\face'}=11$] 
We can assume that $\pair_1(w,\face)=0$, since we already discussed the case $\pair(\face)\neq 0$.
Then there are only three subcases.
\begin{description}
\item[Case $\vtype(w)=(3,3,3,5)$] Then $c_w\pair_2(w,\face')\great 0.190$, and so by the comments before \cref{lemma:11:0} we get $c(\face')\great 0.168$. This gives $c(G) \great 0.158$ by \cref{cor:main}. 
\item[Case $\vtype(w)=(3,4,4,5)$] Then by \cref{rule:3445} $w$ is consecutive on $\face$ to $w_1,w_2\in\fverts(\face)$ with $4\in\vtype(w_1),\vtype(w_2)$. Since $\pair(\face)=0$ we necessarily have that $w_1$ and $w_2$ are special vertices with $\vtype(w_1)=\vtype(w_2)=(3,4,4,5)$. By repeating the argument we deduce that all $ v\in\fverts(\face)$ satisfy $\vtype(v)=(3,4,4,5)$.
\item[Case $\vtype(w)=(3,3,3,3,5)$] Then by \cref{rule:33335} $w$ is consecutive on $\face$ to $w_1\in\fverts(\face)$ with $(3,4,5)\subseteq\vtype(w_1)$. 
Since $\etype(ww_1)=(3,5)$, we cannot have $\vtype(w_1)=\langle 4,3,4,5\rangle$.
Thus, by \cref{rule:3445} we have that $w_1$ is not special with $\vtype(w_1)=(3,4,4,5)$. 
As a consequence $\pair(w_1,\face)\neq 0$, and so the case $\pair(\face)\neq 0$ above applies.
\end{description}
\item[Case $\pair(w,\ddface)\neq 0$] 
Then by the analysis of \cref{sec:ddface} we have that either $\vtype(w)=(5,5,9)$, or $c(\ddface)\great 0.015$.
In the second case, we have $c(G)\great 0.005$ by \cref{cor:main}.
\item[Case $\pair(w,\dface)\neq 0$] 
In this case $w$ is a $\dface$-vertex.
\end{description}

Since we assume $\#\Verts=209$, we have $c(G)=0$ by \eqref{eq:contribution}.
Hence the above case analysis shows that either for all $v\in\fverts(\face)$ we have $\vtype(v)\in\mcl S$, or  all $v\in\fverts(\face)$ satisfy $\vtype(v)\in\{(5,6,7), (3,3,5,7), (5,5,9)\}$.
However,  if $\fverts(\face)=\langle v_1,\ld,v_5\rangle$ and $\vtype(v_1)=(5,5,9)$, we necessarily get $\vtype(v_i)=(5,5,9)$ for all $1\leq i \leq 5$, but this is impossible since $\#\fverts(\face)$ is odd.
The thesis follows.
\end{proof}

%

We recall from \cref{def:red} that a triangle $\tau\in\Faces$ is a {\red}triangle if all its vertices are $\dface$-vertices.
It's easy to see, as in \cref{lemma:210:lemma}, that if $\tau$ is a {\red}triangle then there exist uniquely $v_\tau\in\fverts(\tau)$ and $e_\tau\in\fedges(\tau)$  with $\vtype(v_\tau)=\langle 3,5,3,7\rangle$ and $\etype(e_\tau)=(3,3)$.
Conversely, we have the following.

\begin{lemma}\label{lemma:208:red}
Suppose that $\#\Verts=209$, 
let $\tau\in\Faces$ with $\abs{\tau}=3$ and let $v\in\fverts(\face)$ with $\vtype(v)=\langle 3,5,3,7\rangle$.
Then $\tau$ is a {\red}triangle. Moreover, also $\oppf(v,\tau)$  is a {\red}triangle.
\end{lemma}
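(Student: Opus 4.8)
\textbf{Proof proposal for \cref{lemma:208:red}.}
The plan is to proceed in two stages: first use \cref{lemma:208:5} to propagate the $\dface$-property around the pentagon adjacent to $v$, thereby showing that both triangles of $\vfaces(v)$ are {\red}triangles; then iterate this propagation to reach $\oppf(v,\tau)$. Concretely, suppose $\#\Verts=209$, $\abs{\tau}=3$ and $\vtype(v)=\langle 3,5,3,7\rangle$. Write $\vfaces(v)=\langle\tau_1,\sigma_5,\tau_2,\sigma_7\rangle$ where $\abs{\sigma_5}=5$, $\abs{\sigma_7}=7$, $\abs{\tau_1}=\abs{\tau_2}=3$, and (without loss of generality) $\tau=\tau_1$. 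First I would note that $v\in\fverts(\sigma_5)$ is a $\dface$-vertex, since $\vtype(v)=(3,3,5,7)$. Hence by \cref{lemma:208:5} \emph{every} vertex of $\sigma_5$ is a $\dface$-vertex, i.e. has face vector $(5,6,7)$ or $(3,3,5,7)$. Let $\fverts(\sigma_5)=\langle v, u_1,u_2,u_3,u_4\rangle$ with $vu_1\in\fedges(\tau_1)$ and $vu_4\in\fedges(\tau_2)$; then $u_1,u_4$ are $\dface$-vertices. Since $\etype(vu_1)=(3,5)$, the vertex $u_1$ must have a triangle on the other side of $vu_1$, so $\vtype(u_1)=(3,3,5,7)$; moreover that triangle is exactly $\tau_1$, because $\tau_1\in\vfaces(v)\cap\vfaces(u_1)$ and $\tau_1$ shares the edge $vu_1$ with $\sigma_5$. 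Let $\fverts(\tau_1)=\{v,u_1,w\}$. Then $\langle 5,3,3,7\rangle\subseteq\vtype(u_1)$ forces $\etype(u_1w)=(3,7)$, so $w$ is incident to a triangle, a pentagon (namely $\sigma_7$-side, wait — actually $\sigma_7$), and a heptagon along $\tau_1$; in any case $(3,7)\subseteq\vtype(w)$ together with $w$ touching $\tau_1$ gives $3\in\vtype(w)$ with multiplicity at least one. I would then argue that $w$ cannot be a regular/semi-regular/\ddface/TS/big vertex by a short case-check against the tables: the only way to complete $\langle 3,\dots,7,\dots\rangle$ so that $\pair(w,\cdot)$ is consistent with $c(G)=0$ is $\vtype(w)=(5,6,7)$ or $(3,3,5,7)$, i.e.\ $w$ is a $\dface$-vertex. (Here I use that $\#\Verts=209$ implies $c(G)=0$ by \eqref{eq:contribution}, so any configuration forcing some $c(\face)$ or $c_v\pair(v,\cdot)$ to be bounded below by a positive constant — as quantified in \cref{prop:main} and \cref{cor:main} — is excluded; this is the same device used throughout \cref{lemma:208:5}.) Thus all three vertices $v,u_1,w$ of $\tau=\tau_1$ are $\dface$-vertices, so $\tau$ is a {\red}triangle. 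The symmetric argument applied to $u_4$ and $\tau_2$ shows $\tau_2$ is a {\red}triangle as well.

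Next I would identify $\oppf(v,\tau)$. Recall $\oppf(v,\tau)$ is the face on the far side of the edge $u_1w$ of $\tau_1$, i.e.\ $\efaces(u_1w)=\{\tau_1,\oppf(v,\tau)\}$. We just showed $\etype(u_1w)=(3,7)$, so $\oppf(v,\tau)$ is either a triangle or a heptagon. If it were a heptagon, then $\sigma_7=\oppf(v,\tau)$ would share the two edges $vu_1'$\dots hmm, let me instead argue directly: $u_1$ has face vector $(3,3,5,7)$, with $\vfaces(u_1)=\langle \tau_1, \tau_1', \sigma, \sigma'\rangle$ in cyclic order for appropriate faces; the two triangles at $u_1$ are $\tau_1$ and one further triangle $\tau_1'$, and since $\etype(u_1w)=(3,7)$ the heptagon at $u_1$ is adjacent to $\tau_1$ across $u_1w$ only if $w$'s heptagon equals it — but $\sigma_7\in\vfaces(v)$ already touches $\tau_1$ across $vu_1$, and a heptagon cannot touch $\tau_1$ along two distinct edges $vu_1$ and $u_1w$ without violating admissibility at $u_1$ (we'd get $(7,7)\subseteq\vtype(u_1)$, impossible). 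Hence $\oppf(v,\tau)$ is a triangle. Call it $\rho$ with $\fverts(\rho)=\{u_1,w,z\}$. Now $u_1\in\fverts(\rho)$ is a $\dface$-vertex and $\vtype(u_1)=(3,3,5,7)=\langle 3,5,3,7\rangle$; moreover $\rho$ is one of the two triangles in $\vfaces(u_1)$. Applying the first stage of the argument verbatim with $(u_1,\rho)$ in place of $(v,\tau)$ — or, cleaner, applying the already-established claim ``for every $\dface$-vertex $x$ with $\vtype(x)=\langle 3,5,3,7\rangle$, both triangles of $\vfaces(x)$ are {\red}triangles'' to $x=u_1$ — shows $\rho=\oppf(v,\tau)$ is a {\red}triangle. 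This completes the proof.

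The main obstacle I anticipate is the case-check establishing that $w$ (the third vertex of $\tau$, with $(3,7)\subseteq\vtype(w)$) must be a $\dface$-vertex. Unlike the propagation along the pentagon, which is handed to us clean by \cref{lemma:208:5}, here I must rule out by hand every admissible completion of a vertex touching a triangle and a heptagon: $(3,7,a)$ for $7\le a\le 41$, $(3,3,7)$, $(3,3,3,7)$, $(3,3,4,7)$, $(3,3,5,7)$, $(4,7,a)$, $(5,6,7)$, $(5,7)$-types, etc. For each I need to invoke the relevant row of \cref{prop:main} (via \cref{cor:main}) to see that $c(G)>0$ would follow, contradicting $c(G)=0$ — except for $(5,6,7)$ and $(3,3,5,7)$ themselves. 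This is routine but must be done carefully; a convenient shortcut is to observe that $w\in\fverts(\tau_1)$ with $\tau_1$ adjacent along $vu_1$ to the pentagon $\sigma_5$ all of whose vertices are $\dface$-vertices, which already heavily constrains the faces at $w$, and then to note that any non-$\dface$ type at $w$ puts $w$ (or a neighbour) into one of the ``bad'' situations analysed in \cref{sec:11} or \cref{sec:4041} or \cref{sec:ddface}, each of which forces a strictly positive lower bound on $c(G)$. I would also double-check the orientation bookkeeping in $\vtype(u_1)=\langle 5,3,3,7\rangle$ versus $\langle 5,3,7,3\rangle$ — only the former is consistent with $\tau_1$ sharing the edge $vu_1$ with $\sigma_5$ — since getting the cyclic order wrong would break the identification of $\oppf(v,\tau)$.
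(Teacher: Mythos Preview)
Your overall strategy---use \cref{lemma:208:5} on the pentagon to make $u_1$ a $\dface$-vertex, then case-check the third vertex $w$---matches the paper's, but two steps fail. First, the cyclic type at $u_1$ is not ``orientation bookkeeping'': both $\langle 3,3,5,7\rangle$ and $\langle 3,5,3,7\rangle$ are consistent with $\tau_1,\sigma_5$ being adjacent at $u_1$, and you conflate them when you write $(3,3,5,7)=\langle 3,5,3,7\rangle$. The paper eliminates $\vtype(u_1)=\langle 3,5,3,7\rangle$ by a curvature argument (that cyclic order forces $\etype(u_1w)=(3,7)$, hence $\vtype(w)=(3,7,7)$, hence $c(\ddface)>0.1$ and $c(G)>0$ via \cref{cor:main}). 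Only then does one obtain $\vtype(u_1)=\langle 3,3,5,7\rangle$, which gives $\etype(u_1w)=(3,3)$---not $(3,7)$ as you compute from $\langle 5,3,3,7\rangle$---so your anticipated ``main obstacle'' shrinks to the four completions of $(3,3,7)\subseteq\vtype(w)$; the three non-$\dface$ ones, $(3,3,7)$, $(3,3,3,7)$, $(3,3,4,7)$, are each dispatched by a one-line bound (via $\ddface$ or an $11$-face), leaving $\vtype(w)=(3,3,5,7)$.

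Second, your bootstrap for $\oppf(v,\tau)$ breaks: with the correction above, $\vtype(u_1)=\langle 3,3,5,7\rangle\neq\langle 3,5,3,7\rangle$ (and likewise for $w$), so the first stage cannot be applied ``verbatim'' at $u_1$ or at any known vertex of $\rho$. The paper's argument is different and shorter: since $\etype(u_1w)=(3,3)$, the face $\tau'=\oppf(v,\tau)$ is automatically a triangle with $\fverts(\tau')=\{u_1,w,z\}$, and $u_1,w$ are already $\dface$-vertices; reading off $\vfaces(w)=\langle 7,3,3,5\rangle$ shows the fourth face $\face'$ at $w$ is a pentagon containing both $w$ and $z$, so \cref{lemma:208:5} applied to $\face'$ (using the $\dface$-vertex $w$) makes $z$ a $\dface$-vertex too.
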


\begin{proof}
Let $v_1,v_2\in\Verts$ and $\face_1, \face_2\in\Faces$ with $\fverts(\tau)=\{v,v_1,v_2\}$, $\etype(vv_1)=(3,5)$, $\etype(vv_2)=(3,7)$, $\efaces(vv_1)=\{\tau,\face_1\}$ and $\efaces(vv_2)=\{\tau,\face_2\}$.
Since $\abs{\face_1}=5$ and $v$ is a $\dface$-vertex, we see by \cref{lemma:208:5} that also $v_1$ is a $\dface$-vertex. If $\vtype(v_1)=\langle 3, 5, 3, 7\rangle$ then $\etype(v_1v_2)=(3,7)$ and so $\vtype(v_2)=(3,7,7)$. 
By rule \cref{rule:3ab} we have that $v_2$ is not special, so  $c(\ddface)\geq c_{v_2}\great 0.1$.
By \cref{cor:main} we get $c(G)\great 0.09$, which implies $\#\Verts\leq 208$ by \cref{lemma:main}. 
Therefore $\vtype(v_1)=\langle 3,3,5,7\rangle$, so $\etype(v_1v_2)=(3,3)$ and $(3,3,7)\subseteq \vtype(v_2)$. 
Now, it's easy to rule out the following three cases.
\begin{description}
\item[Case $\vtype(v_2)=(3,3,7)$] 
Then $\pair(v_2,\ddface)=1$ and $c(\ddface)\geq c_{v_2}\great 0.299$. 
\item[Case $\vtype(v_2)\in\{(3,3,3,7),(3,3,4,7)\}$ and $v_2$ is not special] 
Then $\pair(v_2,\ddface)\geq \frac 3 4$ and $c(\ddface)\geq \frac 3 4 c_{v_2}\great 0.037$. 
\item[Case $\vtype(v_2)\in\{(3,3,3,7),(3,3,4,7)\}$ and $v_2$ is special] 
Then there is $\face_3\in\Faces$ with $\abs{\face_3}=11$ such that $\pair_2(v_2,\face_3)\geq \frac 3 4$.
Since $c_-(\face_3)\great -0.022$ by the comments preceding \cref{lemma:11:0}, we get $c(\face_3)\geq \frac 3 4 c_{v_2}-0.022\great  0.015$. 
\end{description}
In all the above cases we obtain $c(G)\geq 0.015+c(\dface)\great 0.005$ by \cref{cor:main}, which implies $\#\Verts\leq 208$ by \cref{lemma:main}. 
Hence $\vtype(v_2)=(3,3,5,7)$ and $\tau$ is a {\red}triangle. 

Let $\tau'=\oppf(v,\tau)$. 
Since $\etype(v_1v_2)=(3,3)$ we have $\abs{\tau'}=3$. 
Let $w\in\Verts$ such that $\fverts(\tau')=\{v_1,v_2,w\}$ 
and let $\face'=\oppf(v_1,\tau')$.
We already proved that $v_1$ and $v_2$ are $\dface$-vertices. 
Moreover, since $\vfaces(v_2)=\langle \face_2,\tau,\tau',\face' \rangle$, we see that $\abs{\face'}=5$. 
Since $v_2\in\fverts(\face')$ is a $\dface$-vertex, we deduce that $w$ is a $\dface$-vertex as well by \cref{lemma:208:5}.
Therefore $\tau'$ is a {\red}triangle.
\end{proof}

\begin{figure}[th]\centering
\input{208_lemma.tikz}
\ 
\input{208_chain.tikz}
\caption{An illustration for Lemma 17.2 and (a portion of) a chain of {\red}triangles.}
\label{fig:chain}
\end{figure}
A remarkable consequence of \cref{lemma:208:red} is that, given a {\red}triangle $\tau\in\Faces$, there is an unique sequence of {\red}triangles 
\[
\ld,\tau_{-2},\tau_{-1},\tau=\tau_0,\tau_1,\tau_2,\ld
\]
such that for every  $n\in\mathbb{Z}$ the triangles $\tau_{2n}$ and $\tau_{2n-1}$ meet at the vertex $v_{\tau_{2n-1}}=v_{\tau_{2n}}$ with $\vtype(v_{\tau_{2n}})=\langle 5,3,7,3\rangle$, while $\tau_{2n}$ and $\tau_{2n+1}$ share the common edge $e_{\tau_{2n}}=e_{\tau_{2n+1}}$.  
Since the set $\Faces$ is finite, every such sequence of {\red}triangles must become periodic. In other words, $\exists L\geq 1$ such that $\tau_L=\tau_0$. If $L$ is the least positive integer with this property, we say that $\{\tau_k\}_{k\in\mathbb{Z}}$ is a \emph{chain of {\red}triangles of length} $L$.
It is clear that the length of a chain of {\red}triangles must be even, but we can be more precise.

\begin{lemma}\label{lemma:208:red:4}
The length of a chain of {\red}triangles is a multiple of 4. A chain of {\red}triangles of length $4m$ consists of exactly $4m$ triangles and involves exactly $10m$ pairwise distinct edges and $6m$ distinct vertices, organized in a 2-cell complex embedded in the sphere, homeomorphic to the union of $2m$ disks glued together at $2m$ boundary points in a circular structure.
\end{lemma}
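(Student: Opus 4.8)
The plan is to first pin down the rigid local structure of a chain of {\red}triangles, then extract from the alternation of adjacent pentagons and heptagons a parity obstruction forcing the length to be a multiple of $4$, and finally read off the stated combinatorial and topological data.

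\textbf{Step 1: two moves, and distinctness of the triangles.} For a {\red}triangle $\tau$ let $v_\tau,e_\tau$ be the unique vertex and edge with $\vtype(v_\tau)=\langle 3,5,3,7\rangle$ and $\etype(e_\tau)=(3,3)$; note $e_\tau$ is the edge of $\tau$ opposite $v_\tau$, and the two remaining edges $v_\tau a_\tau$, $v_\tau b_\tau$ have types $(3,5)$ and $(3,7)$. Define $E\tau:=\oppf(v_\tau,\tau)$ (the triangle across $e_\tau$) and $V\tau:=$ the other triangle of $\vfaces(v_\tau)$. By \cref{lemma:208:red} together with \cref{lemma:distinct} and \cref{lemma:distinct:vfaces}, both $E$ and $V$ are well-defined fixed-point-free involutions on the set of {\red}triangles (fixed-point-free because a size-$3$ face cannot have a repeated edge or appear twice in the face-cycle of a vertex). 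In the notation of the lemma, $\tau_{2n+1}=E\tau_{2n}$ and $\tau_{2n}=V\tau_{2n-1}$, so the chain is the orbit of $\tau_0$ under the alternating word $\cdots EV$, and its length $L$ is twice the order of $EV$ at $\tau_0$; in particular $L$ is even. I would then argue that $\tau_0,\dots,\tau_{L-1}$ are pairwise distinct: the even-indexed ones form the $\langle EV\rangle$-orbit and the odd-indexed ones another, and if $\tau_i=\tau_j$ with $i$ even, $j$ odd, then, since the edge-neighbour and vertex-neighbour of a {\red}triangle inside the chain are each uniquely determined, one gets $\tau_{i+k}=\tau_{j-k}$ for all $k$, hence $\tau_\ell=\tau_{\ell+1}$ for $\ell=(i+j-1)/2$, contradicting that consecutive chain triangles are distinct.

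\textbf{Step 2: the local picture and exact incidences.} Around $\tau$ sit a pentagon $P_\tau$ glued along $v_\tau a_\tau$ and a heptagon $H_\tau$ glued along $v_\tau b_\tau$. A short inspection of the degree-$4$ vertices involved shows $(i)$ that $E$ interchanges the pentagon-side and heptagon-side generic vertices ($a_{E\tau}=b_\tau$, $b_{E\tau}=a_\tau$, $e_{E\tau}=e_\tau$), and $(ii)$ that $V$ fixes the adjacent pentagon and heptagon ($P_{V\tau}=P_\tau$, $H_{V\tau}=H_\tau$, $v_{V\tau}=v_\tau$). Hence the pair $\{\tau_{2n-1},\tau_{2n}\}$ shares the distinguished vertex $u_{n-1}:=v_{\tau_{2n-1}}=v_{\tau_{2n}}$ together with the pentagon and heptagon incident to it, while $\{\tau_{2n},\tau_{2n+1}\}$ shares the $(3,3)$-edge $e_n:=e_{\tau_{2n}}=e_{\tau_{2n+1}}$. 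Since every vertex of face-vector $(3,3,5,7)$ carries exactly two triangles, every $(3,3)$-edge lies on exactly two triangles, and every $(3,5)$- or $(3,7)$-edge on exactly one triangle, Step 1 gives: a $(3,5)$- or $(3,7)$-edge, and a vertex, is never shared by two non-consecutive chain triangles; a distinguished vertex is never a generic vertex (its two triangles are ``opposite'' rather than ``adjacent'' in the face-cycle). Therefore the chain of $L$ triangles involves exactly $L/2$ distinguished vertices, $L$ generic vertices, $L/2$ interior edges (the $(3,3)$-edges) and $2L$ boundary edges, all distinct; for the resulting subcomplex $K\subseteq S^2$ one finds $\#\{\text{faces}\}=L$, $\#\{\text{edges}\}=\tfrac{5L}{2}$, $\#\{\text{vertices}\}=\tfrac{3L}{2}$, so $\chi(K)=0$.

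\textbf{Step 3: topology, and Step 4: the parity.} The maximal sub-unions of chain triangles glued along edges are the quadrilaterals $D_n:=\tau_{2n}\cup\tau_{2n+1}$ ($n\in\mathbb Z/(L/2)$), each a topological disk with the meeting points $u_{n-1},u_n$ as opposite corners; by Step 2, $D_n\cap D_{n+1}=\{u_n\}$ and $D_n\cap D_{n'}=\emptyset$ otherwise, so $K$ is a cyclic ``necklace'' of $L/2$ disks glued at $L/2$ points, with nerve the cycle graph $C_{L/2}$. Finally, I would fix an orientation of $S^2$ and, traversing $\partial D_n$ counterclockwise, observe that its four boundary edges have side-vectors alternating $(3,5),(3,7),(3,5),(3,7)$ by $(i)$ above, so the edge leaving $u_{n-1}$ and the edge leaving $u_n$ share a common type $\epsilon_n\in\{(3,5),(3,7)\}$; at the meeting point $u_{n-1}$, whose face-vector is $\langle 3,5,3,7\rangle$, the four incident edges in cyclic order have types $(3,5),(3,5),(3,7),(3,7)$, while $D_{n-1}$ and $D_n$ occupy the two triangular sectors there, and since the counterclockwise edge leaving a sector is the one immediately preceding the corresponding triangle in the cyclic order, one gets $\epsilon_{n-1}\ne\epsilon_n$. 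An alternating $\{(3,5),(3,7)\}$-valued function on $\mathbb Z/(L/2)$ exists only if $L/2$ is even, i.e. $4\mid L$; writing $L=4m$ and feeding this into the counts of Step 2 and the necklace description of Step 3 gives exactly $4m$ triangles, $10m$ distinct edges, $6m$ distinct vertices, and a necklace of $2m$ disks glued at $2m$ points.

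The main obstacle I anticipate is Step 4: getting the orientation conventions exactly right so that the conclusion is $\epsilon_{n-1}\ne\epsilon_n$ rather than $\epsilon_{n-1}=\epsilon_n$, and making watertight the degree-$4$ bookkeeping of Step 2 that excludes all accidental coincidences among the boundary edges and vertices of far-apart chain triangles.
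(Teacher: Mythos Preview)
Your proposal is correct and follows essentially the same orientation-parity strategy as the paper. The only real difference is bookkeeping: the paper fixes an orientation $\pi$ and tracks the cyclic face-vector $\langle 3,3,5,7\rangle_\pi$ versus $\langle 3,3,7,5\rangle_\pi$ at the non-distinguished vertices $v_{1,n},v_{2,n}$, observing in one line that this flips between $\tau_n$ and $\tau_{n+2}$ and hence $4\mid L$; your $\epsilon_n\in\{(3,5),(3,7)\}$ at the meeting points $u_n$ is a repackaging of exactly this invariant at the disk level. Your Steps~1--3 (distinctness of the $\tau_k$, the edge/vertex counts, and the necklace topology) are considerably more explicit than the paper's proof, which dismisses all of this as ``clear from the previous discussion and \cref{lemma:distinct}''. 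The orientation worry you flag in Step~4 is real but routine: once you fix the convention that an edge oriented away from $u$ has on its left the face reached by rotating counterclockwise from it, the two ``leaving'' edges at $u_{n-1}$ are adjacent to the two faces clockwise of $\tau_{2n-1}$ and $\tau_{2n}$ respectively, and since those triangles sit in opposite sectors of $\langle 3,5,3,7\rangle$ these faces are the pentagon and the heptagon, forcing $\epsilon_{n-1}\ne\epsilon_n$.
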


\begin{proof}
Let $\{\tau_k\}_{k\in\mathbb{Z}}$ be the chain of {\red}triangles, and for every $k\in\mathbb{Z}$ let $v_{0,k},v_{1,k},v_{2,k}\in\Verts$ such that $\fverts(\tau_k)=\{v_{0,k},v_{1,k},v_{2,k}\}$ and $v_{0,k}=v_{\tau_k}$. Let now $n\in\mathbb{Z}$ and let $\pi$ be the orientation of the sphere for which $\vtype(v_{1,n})=\langle 3,3,5,7 \rangle_\pi$. Then a careful inspection reveals that 
$\vtype(v_{2,n})=\langle 3,3,5,7 \rangle_\pi$, but $\vtype(v_{1,n+2})=\vtype(v_{2,n+2})=\langle 3,3,7,5 \rangle_\pi$. 
Since $v_{1,n+L}$ is equal to either $v_{1,n}$ or $v_{2,n}$, we must have $L$ mutiple of 4.
The rest of the statement is clear from the previous discussion and \cref{lemma:distinct}.
\end{proof}

We now notice the following.

\begin{lemma}\label{lemma:209:no}
Suppose that there exists a PCC graph with 209 vertices. Then there also exists a PCC graph with at least 210 vertices.
\end{lemma}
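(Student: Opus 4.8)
The plan is to convert the rigidity forced by $\#\Verts=209$ into the presence of a chain of {\red}triangles, and then to produce a larger PCC graph by splicing one extra period into that chain. First I would show that $G$ contains a {\red}triangle. Since $\#\Verts=209$, equation \eqref{eq:contribution} gives $c(G)=\sum_{\face\in\FFaces}c(\face)=0$. Let $Z=\#\{v\in\Verts:\vtype(v)\in\{(5,6,7),(3,3,5,7)\}\}$. If $Z=0$, then $c(\dface)=0$ by \eqref{eq:dface}, while $c(\face)\geq 0$ for every $\face\in\FFaces\setminus\{\dface\}$, with $c(\face)>0$ whenever $\pair(\face)\neq 0$ by \cref{prop:main}; hence $0=c(G)$ forces $\pair(\face)=0$ for all $\face\in\FFaces$, contradicting $\sum_{\face\in\FFaces}\pair(\face)=\#\Verts=209$. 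So $Z\geq 1$, i.e.\ some $v_0\in\Verts$ is a $\dface$-vertex; since $5\in\vtype(v_0)$ it lies on a pentagon $\face$, and \cref{lemma:208:5} then forces every vertex of $\face$ to be a $\dface$-vertex (the other alternative of that lemma is impossible, as $\vtype(v_0)\notin\mcl{S}$). Now a parity count on the $5$-cycle $\fedges(\face)$ finishes this step: examining the possible cyclic orderings of $\vfaces(v)$, a vertex of type $(5,6,7)$, or of type $(3,3,5,7)$ with its two triangles consecutive, is incident to exactly one edge of $\fedges(\face)$ of side vector $(5,7)$, whereas a vertex of type $\langle 3,5,3,7\rangle$ is incident to none; counting incidences of $(5,7)$-edges gives $2\cdot\#\{e\in\fedges(\face):\etype(e)=(5,7)\}=5-\#\{v\in\fverts(\face):\vtype(v)=\langle 3,5,3,7\rangle\}$, so the latter cardinality is odd, hence positive. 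Applying \cref{lemma:208:red} to a triangle at such a vertex exhibits a {\red}triangle.

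Next, by the discussion preceding \cref{lemma:208:red:4} this {\red}triangle sits in a chain of {\red}triangles, whose length is $4m$ for some $m\geq 1$ (\cref{lemma:208:red:4}). I would then analyse the faces immediately surrounding the chain: at each hinge vertex (type $\langle 3,5,3,7\rangle$) the chain is flanked by a pentagon on one side and a heptagon on the other, and at each remaining vertex of the {\red}triangles (type $\langle 3,3,5,7\rangle$) it is flanked, on one side, by a pentagon and a heptagon. By \cref{lemma:208:5} these pentagons in turn have all $\dface$-vertices, and iterating this — using $c(G)=0$ and \cref{prop:main} once more to exclude any non-$\dface$-vertex that might otherwise appear — pins down the combinatorics of an annular band around the chain, periodic with period $4$ in the chain direction. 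Splicing one more period of this band into $G$ (four extra {\red}triangles, together with the finitely many pentagons, hexagons and heptagons of one period) yields a new planar graph $G'$ in which every vertex keeps the face vector it had in $G$, since the operation only replaces a periodic sub-band by a longer periodic sub-band. Hence $G'$ is simple, has all vertices of degree $\geq 3$ and positive combinatorial curvature, and is neither a prism nor an antiprism (it still has $\dface$-vertices, whose face vectors occur in no prism or antiprism); and since one period contains a fixed number $k\geq 1$ of vertices, $\#\Verts(G')=209+k\geq 210$, which is what we want.

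The hard part is the middle of the second step. One must carry the rigidity analysis far enough around the chain to be sure that the surrounding band is exactly the expected periodic configuration, and — crucially for the conclusion — that the spliced graph $G'$ really is a PCC graph (in particular simple, with no accidental multiple edges created by the surgery). The delicate issue is bookkeeping the face vectors along the band, especially at the hinge vertices and their neighbours: all of them must remain among the admissible types listed in \cref{table:admissible}, whereas a careless splice would be liable to create an inadmissible vertex such as $(3,3,7,7)$ or $(3,3,8,8)$, whose combinatorial curvature is negative.
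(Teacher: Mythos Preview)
Your first step—locating a $\heartsuit$-triangle via a $\dface$-vertex, \cref{lemma:208:5}, and the parity of $5$—is correct and matches the paper's argument. The divergence, and the genuine gap, is in the second step.

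Your claim that after splicing one period ``every vertex keeps the face vector it had in $G$'' is not justified and is in general false. A period of the band you describe consists of triangles, pentagons and heptagons, but the band is bordered on each side by larger faces (in the $208$-vertex example of \cref{fig:example}, by the two $39$-gons). Inserting one more period lengthens those outer faces, so every vertex on them changes face vector; for instance $(3,7,39)$ could become $(3,7,42)$, which has zero curvature and is inadmissible. To salvage the splicing you would need the \emph{entire} graph, not just a finite-width annulus, to be periodic in the chain direction—essentially a full classification of $209$-vertex PCC graphs, which is far harder than the lemma.

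The paper's second step is much simpler and sidesteps all of this. By \cref{lemma:208:red:4} the chain $C$ is a circular union of $2m$ disks meeting at $2m$ points; by Jordan--Sch\"onflies its complement in the sphere is two open disks $U_1,U_2$ containing $n_1,n_2$ vertices. The crucial observation is that the boundary $2$-cell structure of $\overline{U_1}$ and $\overline{U_2}$ (including all face vectors along $\partial C$) is isomorphic, because the chain looks the same from both sides. Now $n_1+n_2+6m=209$ is odd, so $n_1\neq n_2$; say $n_1>n_2$. Replacing the $2$-cell structure on $\overline{U_2}$ by a copy of that on $\overline{U_1}$ produces a PCC graph $G'$ with $2n_1+6m\geq 210$ vertices. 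No face vector changes anywhere, since the surgery glues along identical boundary data. The parity of $209$ does all the work, and nothing about the interiors of $U_1$, $U_2$ needs to be analysed.
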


\begin{proof}
Let $G$ be a PCC graph with 209 vertices. From \cref{lemma:main} and \cref{prop:main} we see that the set of its $\dface$-vertices is non-empty.
Moreover, from \cref{lemma:208:5} we deduce that there is a face $\face$ with $\abs{\face}=5$ for which all $v\in\fverts(\face)$ are $\dface$-vertices.
Since the number of edges of $\face$ is odd, there must exist two edges $e_1,e_2\in\fedges(\face)$ consecutive on $\face$ with $\etype(e_1),\etype(e_2)\neq (5,7)$. Then the common endpoint of $e_1,e_2$ is a vertex $v$ with $\vtype(v)=\langle 3,5,3,7\rangle$. 
By \cref{lemma:208:red} this implies the existence of a {\red}triangle, and the previous discussion shows the existence of a chain of {\red}triangles.
By \cref{lemma:208:red:4} this chain contains $6m$ vertices for some $m\in\N$. 

Let $C$ be the geometric support of this chain, that is the closed subset of the sphere which is the union of the vertices, the edges and the triangle of the chain. By the Jordan-Sch\"onflies theorem (a version for 2-cell complexes suffices) the complement of $C$ consists of exactly two connected open sets $U_1,U_2$, each of them homeomorphic to the unit ball, with respective closures $\wb{U_1},\wb{U_2}$ homeomorphic to the unit disk.
For $i=1,2$ let $n_i$ be the number of vertices of $G$ contained in $U_i$, and suppose without loss of generality that $n_1\geq n_2$. We have that $n_1+n_2+6m=209$ is odd, thus $n_1\geq n_2+1$.

The embedding of $G$ in the sphere induces a 2-cell complex structure on both $\wb{U_1},\wb{U_2}$. We notice that their boundary structure (including the face vectors of all the vertices) is isomorphic. 
Therefore, we can perform a \emph{graph surgery} and replace the 2-cell complex structure on $\wb{U_2}$ with an homeomorphic copy of the 2-cell complex structure on $\wb{U_1}$, without altering the geometric and combinatorial data in a neighborhood of $C$. This construction gives a new graph $G'$, 2-cell embedded in the sphere, with $n_1+n_1+6m\geq 210$ vertices.

It is clear that $G'$ doesn't contain loops and that all its vertices have positive combinatorial curvature and degree at least 3. Moreover, since $G$ contains at least a vertex $v$ with $\vtype(v)=\langle 3,5,3,7\rangle$, it cannot be a prism or an antiprism. Therefore $G'$ is a PCC graph. 
\end{proof}

However, we already proved that no PCC graph can exist with at least 210 vertices. Therefore by \cref{lemma:209:no}, \cref{lemma:210}, and \cref{lemma:210:no} we deduce \cref{thm:main}.


\section{PCC graphs with 208 vertices and faces with given size}\label{sec:example}

In \cref{fig:example} we exhibit a PCC graph with 208 vertices. 
This graph was discovered in 2011 by the author (private communication with Prof. Jamie Sneddon) and later independently re-discovered by Oldridge \cite{Oldridge:244}.

\begin{figure}[ht]\centering
\includegraphics[scale = 0.7]{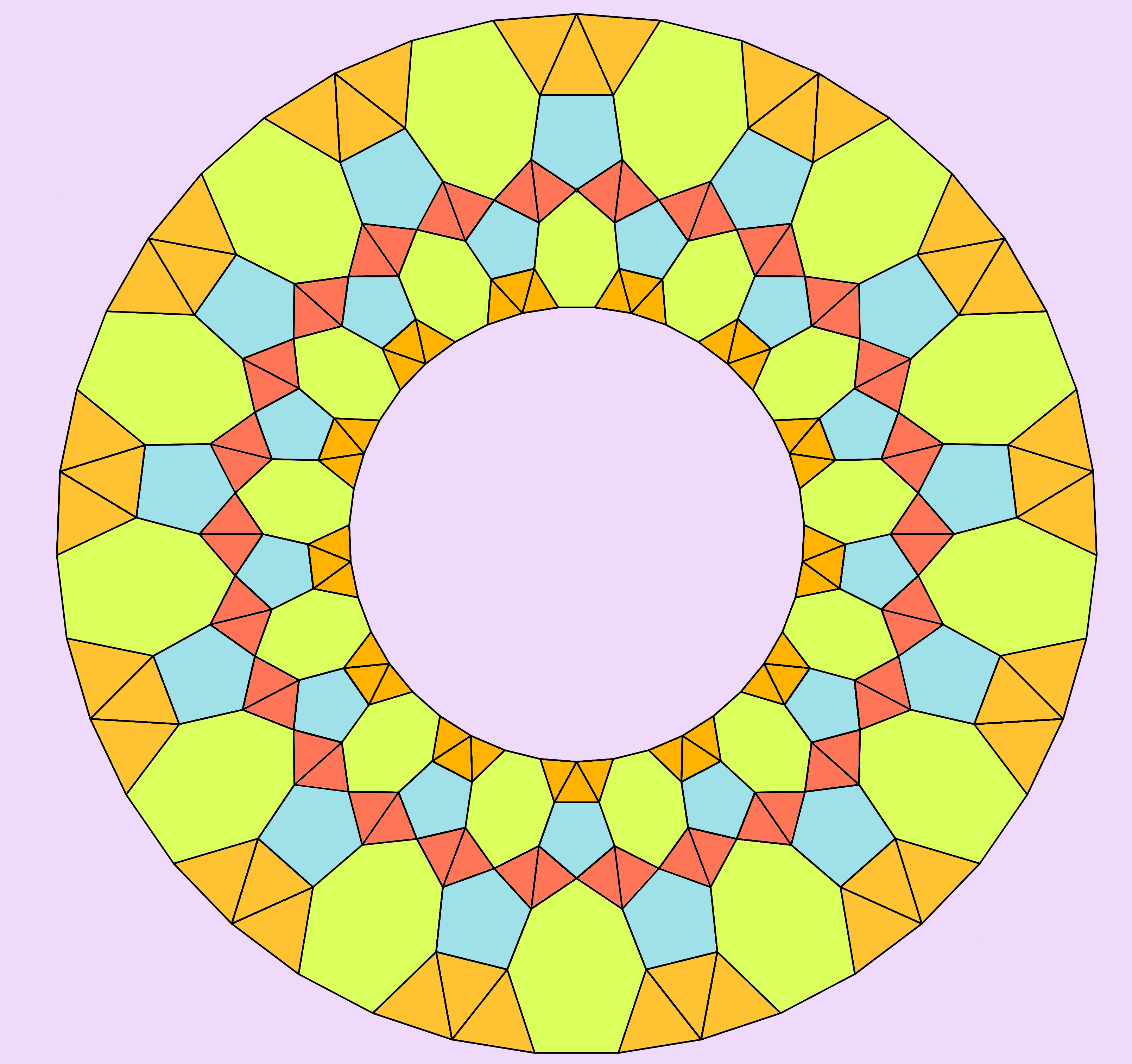} 
\caption{A PCC graph with 208 vertices and 3,5,7,39-sided faces.}
\label{fig:example}
\end{figure}

This graph contains a chain of {\red}triangles and faces with 3,5,7 and 39 edges. In total it has 208 vertices, 390 edges and 184 faces.
More precisely, there are 130 faces $\face$ with $\abs{\face}=3$; 26 with $\abs{\face}=5$; 26 with $\abs{\face}=7$; and 2 with $\abs{\face}=39$ (the inner and outer regions).  
There are 52 vertices $v$ with $\vtype(v)=(3,7,39)$ and $K(v)=\frac{1}{546}$; 130 with $\vtype(v)=(3,3,5,7)$ and $K(v)=\frac{1}{105}$; 26 with $\vtype(v)=(3,3,3,39)$ and $K(v)=\frac{1}{39}$.

\begin{figure}[ht]\centering
	\includegraphics[scale = 0.3]{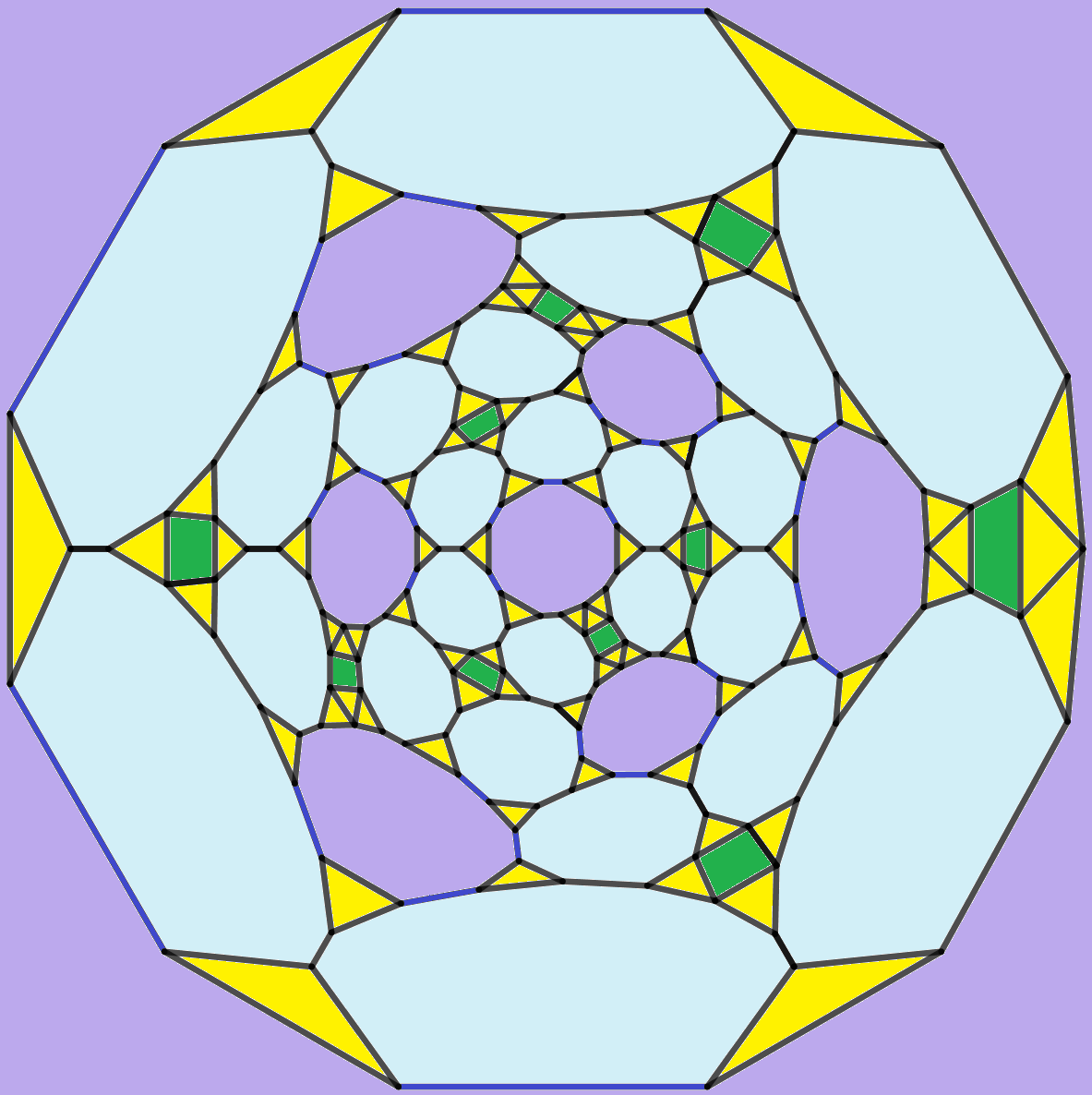} 
	\caption{A PCC graph with 208 vertices and 3,4,11,13-sided faces.}
	\label{fig:example:1113}
\end{figure}

In \cref{fig:example:1113} we show another of the PCC graphs with 208 vertices found by Nicholson and Sneddon \cite{Sneddon:208}. See also \cite{planar:thesis:marissa} for discussions on these graphs. 
This graph contains faces with 3,4,11 and 13 edges, organized as in \cref{fig:11:final}. 
In total it has 208 vertices, 336 edges and 130 faces.
More precisely, there are 88 faces $\face$ with $\abs{\face}=3$; 10 with $\abs{\face}=4$; 24 with $\abs{\face}=11$; and 8 with $\abs{\face}=13$ (included the outer region).  
There are 96 vertices $v$ with $\vtype(v)=(3,11,13)$ and $K(v)=\frac{1}{858}$; 40 with $\vtype(v)=(3,3,4,11)$ and $K(v)=\frac{1}{132}$; 64 with $\vtype(v)=(3,11,11)$ and $K(v)=\frac{1}{66}$; 8 with $\vtype(v)=(3,3,3,13)$ and $K(v)=\frac{1}{13}$.

The PCC graph in \cref{fig:example} is constructed modularly around a closed chain of {\red}triangles, by repeating 26 times a ``{\red}motif'' that consists of 5 triangles, a 5-sided face and a 7-sided face. 
Oldridge observed that by allowing only $2N$ repetitions in this construction, we could exhibit PCC graphs containing a pair of faces with size $\abs{\face}=3N$, for each $1\leq N\leq 13$.  
This disproves a previous conjecture: in \cite[pag. 29]{Reti:138} the authors made the prediction that no PCC graph could contain a face $\face$ with $\abs{\face}\geq 23$. 

In \cite[Sec. 6.3]{Oldridge:244} it is proposed the open problem of exhibiting a PCC graph containing a face with size $\abs{\face}\geq 23$ not divisible by three. 
We now provide a simple solution to this problem. 
It suffices to repeat the {\red}motif as before, but around an \emph{open} chain of {\red}triangles, as in \cref{fig:disproof:example}. 
In this way each {\red}motif contributes three edges to the ``outer'' face. 
Moreover, in order to end up with a PCC graph, it is necessary to add two ``closing caps'' at the extremities of the chain, in such a way that only   admissible vertices are produced. 
This can be done without difficulty: in \cref{fig:disproof:example} we use 4 triangles and two pentagons, so each cap contributes five edges to the outer face. By using one less triangle it is possible to construct a cap that contributes only four edges.  

\begin{figure}[ht]\centering
	\input{disproof_example_25.tikz}
	\caption{An example of a PCC graph containing a face $\face$ with $\protect \abs\face=25$.}
	\label{fig:disproof:example}
\end{figure}

With this construction we are able to produce, for any $8\leq N\leq 41$, a PCC graph $G_N$ that contains a face $\face$ with $\abs\face=N$. 
These graphs also contain 3-sided, 5-sided and 7-sided faces. 
Oldridge's construction for $N=2$ provides a PCC-graph with a 6-sided face, and there are plenty of PCC graphs with 4-sided faces (for example, the Nicholson-Sneddon graphs, or some platonic solids). 
In conclusion, all sizes $3\leq N\leq 41$ are admissible in a PCC graph.

\subsection*{Acknowledgements}

This work was supported in part by the full scholarship (Corso Ordinario) granted by the Scuola Normale Superiore (Pisa, Italy), and in part by the full International scholarship awarded by the Faculty of Graduate and Postdoctoral Studies (Ottawa, Canada). 
The author is grateful to Bobo Hua for noticing that our main result on the size of planar PCC graphs resolves also the corresponding problem for projective PCC graphs.  


\bibliography{biblio_PCC_graph_mendeley}


\end{document}